\theoremstyle{plain} 
\newtheorem{lemma}[equation]{Lemma} 
\newtheorem{proposition}[equation]{Proposition} 
\newtheorem{theorem}[equation]{Theorem} 
\newtheorem{corollary}[equation]{Corollary} 
\newtheorem{conjecture}[equation]{Conjecture}
\newtheorem{question}[equation]{Question}
\theoremstyle{definition}
\newtheorem{definition}[equation]{Definition} 
\theoremstyle{remark}
\newtheorem{remark}[equation]{Remark}
\newenvironment{Pf}{\noindent{\textit{Proof of}}}{$\square$ }
\numberwithin{equation}{section}
\def\norm#1.#2.{\lVert#1\rVert_{#2}}
\def\Norm#1.#2.{\bigl\lVert#1\bigr\rVert_{#2}}
\def\NOrm#1.#2.{\Bigl\lVert#1\Bigr\rVert_{#2}}
\def\NORm#1.#2.{\biggl\lVert#1\biggr\rVert_{#2}}
\def\NORM#1.#2.{\Biggl\lVert#1\Biggr\rVert_{#2}}
\def\z{\zeta}
\def\D{{\mathbb D}}
\def\l{\lambda}
\def\N{{\mathbb N}}
\def\ip#1,#2,{\langle #1,#2\rangle}
\def\Ip#1,#2,{\langle#1,#2\rangle}
\def\IP#1,#2,{\langle#1,#2\rangle}
\def\E{{\mathcal E}}
\def\mid{\,:\,}
\def\XXint#1#2#3{{\setbox0=\hbox{$#1{#2#3}{\int}$}
     \vcenter{\hbox{$#2#3$}}\kern-.5\wd0}}
\newcommand{\T}{\mathbb{T}}
\newcommand{\TT}{\mathcal{T}}
\newcommand{\PP}{\mathcal{P}}
\newcommand{\LL}{\mathcal{L}}
\renewcommand{\E}{\mathbb{E}}
\begin{document}

\title[Sarason Conjecture on the Bergman Space]{Sarason Conjecture on the Bergman space}
\subjclass[2010]{Primary: Primary: 47B38, 30H20 Secondary: 42C40, 42A61,42A50 }
\keywords{Bergman spaces, Toeplitz products, two-weight inequalities, Bekoll\'e weights}

\author[A. Aleman]{Alexandru Aleman}
\thanks{}
\address{Centre for Mathematical Sciences, University of Lund, Lund, Sweden}
\email {aleman@maths.lth.se}

\author[S. Pott]{Sandra Pott}
\thanks{}
\address{Centre for Mathematical Sciences, University of Lund, Lund, Sweden}
\email {sandra@maths.lth.se}

\author[M.C. Reguera]{Maria Carmen Reguera}
\thanks{Supported partially by the Crafoord Foundation and by Lund University, Mathematics in the Faculty of Science, with a postdoctoral research grant}
\address{Department of Mathematics, Universitat Autonoma de Barcelona, Bellaterra (Barcelona), Spain}
\email {mreguera@mat.uab.cat}


\begin{abstract}
We provide a counterexample to the Sarason Conjecture  for the Bergman space and present a characterisation of bounded Toeplitz products on the Bergman space in terms of test functions by means of a dyadic model approach. We also present some results about two-weighted estimates for the Bergman projection. Finally, we introduce the class $B_\infty$ and give sharp estimates for the one-weighted Bergman projection.
\end{abstract}

\maketitle

\section{Introduction}
Let $dA$ denote Lebesgue area measure on the unit disc $\mathbb{D}$, normalized 
so that the measure of $\mathbb{D}$ equals 1.
The Bergman space $A^2(\D)$ is the closed subspace of analytic 
functions in the Hilbert space $L^2(\mathbb{D},dA)$. 
Likewise, the Hardy space $H^2(\T)$
 is the closed subspace of $L^2(\T)$  consisting of analytic functions.

The Bergman projection $P_B$, given by 
$$
    P_B f (z) = \int_\D\frac{f(\z)}{(1-\overline{\z}z)^2}dA(\z),
$$
 is the orthogonal projection from  
$L^2(\mathbb{D},dA)$
onto $A^2(\D)$, while the Riesz projection $P_R$ denotes the orthogonal projection from $L^2(\T)$ to $H^2(\T)$.
 For each  function $f \in L^2(\D)$ 
we have the densely defined  Bergman space Toeplitz 
operator $T_f$  on $A^2(\D)$, given by 
$$T_f u = P_B fu. $$
In the same way, given $f \in L^2(\T)$,  the Hardy space Toeplitz operator $\TT_f$ on $H^2$ is given by
$$
   \TT_f v = P_R fv,
$$
where $u$ and $v$ are suitable elements in $A^2$ and $H^2$, respectively.

For analytic $f$, it is easy to see
that both the Bergman space Toeplitz operator 
$T_f$  and the Hardy space Toeplitz operator $\TT_f$ are bounded, if and only if $f$ is a bounded function on 
$\mathbb{D}$.

In this paper, we shall  study the 
question as to which pairs of functions  $f, g \in A^2(\D)$ give rise to a bounded Toeplitz product operator
$$
     T_f T_{{g}}^* :   A^2(\D) \rightarrow A^2(\D).
$$
This questions has a rich history and interesting connections to Harmonic Analysis, as we outline below.

Sarason \cite{sarason} conjectured the following:

\begin{conjecture}[Sarason Conjecture for the Bergman space]
\label{c.sarasonB}
Let $f,g\in A^2(\D)$. Then $T_{f}T^{*}_{g}$ is bounded on  $A^2(\D)$, if and only if
\begin{equation}  \label{conj:sar1}
  b_{f,g}:=\sup_{z \in \D}    B(|f|^2)(z) B(|g|^2)(z) < \infty,
\end{equation}
where $B$ denotes the Berezin transform,
\begin{equation}
\label{e.berezin}
B f (z) = \int_\D\frac{f(\z)(1-|z|^{2})^{2}}{|1-\overline{\z}z|^4}dA(\z).
\end{equation}

\end{conjecture}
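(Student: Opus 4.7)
The abstract announces that the conjecture is in fact \emph{false}, so my plan is to construct a counterexample rather than a direct proof. The ``only if'' direction of \eqref{conj:sar1} follows by testing $T_fT_g^*$ against normalized Bergman reproducing kernels and is not the issue; it is the sufficiency that must fail. The model for the strategy is Nazarov's disproof of the analogous Sarason conjecture on the Hardy space: introduce a dyadic substitute for $A^2(\D)$, build the counterexample inside that discrete model where the combinatorics are transparent, and then lift it to genuine analytic functions.

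First I would set up the dyadic model. Take a standard Whitney-type decomposition of $\D$ into Carleson boxes $\{Q\}$ indexed by a binary tree $\TT$, with centers $z_Q$ and Carleson tents $\widehat Q$. A short computation shows that for $z$ near $z_Q$, the Berezin transform $B(|f|^2)(z)$ is comparable to the average $\langle |f|^2\rangle_{\widehat Q}$, so condition \eqref{conj:sar1} becomes the ``product averages'' condition $\sup_{Q\in\TT}\langle|f|^2\rangle_{\widehat Q}\langle|g|^2\rangle_{\widehat Q}<\infty$. In the same model $T_fT_g^*$ decomposes, up to paraproduct error terms that can be absorbed, into a two-weighted martingale transform on $\TT$ whose boundedness is governed by a bilinear testing condition on Haar-like atoms indexed by $\TT$; this is essentially the ``characterisation in terms of test functions'' promised in the abstract. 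Inside this model I would then transcribe Nazarov's construction, assigning alternating large/small values to $|f|^2$ and $|g|^2$ along stopping boxes of a principal path so that the \emph{product} of averages stays bounded at every scale, while the operator applied to an explicit test function (typically the indicator of a union of stopping tents) grows without bound.

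The main obstacle, and the step I expect to occupy most of the work, is \emph{analyticity}: the dyadic weights $u, v$ so produced are not yet squared moduli of holomorphic functions. I would look for analytic $f, g \in A^2(\D)$ with $|f|^2\asymp u$ and $|g|^2\asymp v$ on the relevant tents by combining localized reproducing-kernel surrogates at the centers $z_Q$ with an outer-function construction adapted to the disc, and then verify that the lifting preserves both the Berezin bound (an upper estimate) and the dyadic lower bound forcing $T_fT_g^*$ to be unbounded (a testing-against-atoms estimate). Controlling the cross-terms that this lifting introduces, so that neither half of the estimate is destroyed when passing from the discrete to the analytic side, is the heart of the argument.
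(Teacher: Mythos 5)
Your proposal correctly diagnoses this as a counterexample problem, and the opening idea of a dyadic model leading to a test-function characterisation matches what the paper does in Sections 2--4. But the counterexample route you sketch --- a Nazarov-style dyadic weight pair lifted to analytic symbols --- is not what the paper does, and there is a concrete reason it would likely fail. The paper's Theorem \ref{pplus} shows that, for analytic $f,g$, boundedness of $T_fT_g^*$ is \emph{equivalent} to boundedness of the positive operator $P^+_{f,g}u(z)=|f(z)|\int_\D \tfrac{|g(\zeta)|\,u(\zeta)}{|1-\overline{\zeta}z|^2}\,dA(\zeta)$; the proof hinges on Stokes' formula and subharmonicity of $|f|^2,|g|^2$ and is genuinely special to analytic symbols. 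This removes the cancellation that Nazarov's Hardy-space counterexample exploits: a dyadic weight pair built to defeat a signed martingale transform cannot defeat $T_fT_g^*$ once the symbols are forced to be analytic, because the problem has already collapsed to a positive one. The paper says as much --- with one non-analytic symbol (e.g.\ $f=1-|z|^2$) the counterexample is easy, and ``subharmonicity of the weights seems to form a significant obstruction.''

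The paper's actual counterexample goes through the Dirichlet space instead. Lemma \ref{ourex} shows that for a Lipschitz analytic $g$ with $|g(z)|\gtrsim 1-|z|$, boundedness of $T_fT_g^*$ forces $|f|^2\,dA$ to be a Carleson measure for the Dirichlet space ($\delta(f)<\infty$), while $b_{f,g}<\infty$ only forces the strictly weaker log-Carleson bound ($\gamma(f)<\infty$). Stegenga \cite{MR0550655} exhibited a sequence $(f_n)$ with $\gamma(f_n)$ bounded but $\delta(f_n)\to\infty$; the remaining delicacy, and the real heart of the argument, is to manufacture the companion $g$. The paper does this with Dyn{\cprime}kin's interpolation theorem \cite{MR538552}, producing an outer analytic Lipschitz $g$ vanishing exactly on the Cantor-type set where the poles of Stegenga's $f_n$ accumulate, so that $\|f_n g\|_\infty$ stays bounded while $g$ keeps the lower bound $\gtrsim 1-|z|$. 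A closed-graph argument then closes the contradiction. Your lifting step --- seeking analytic $f,g$ with $|f|^2\asymp u$, $|g|^2\asymp v$ on prescribed tents for a given dyadic weight pair $(u,v)$ --- offers no mechanism comparable to this, and the Harnack-type rigidity of $|f|^2$ for analytic $f$ makes such pointwise prescription generically impossible.
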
 
 Likewise, he conjectured the following for the case of the
Hardy space:

\begin{conjecture}[Sarason Conjecture for the Hardy space]
\label{c.sarasonH}
Given $f,g\in H^2(\T)$,  $\TT_f \TT_g^*$ is bounded in $H^2(\T)$ if and only if 
\begin{equation}  \label{conj:sar2}
     \sup_{z \in \D}    \PP(|f|^2)(z) \PP(|g|^2)(z) < \infty,
\end{equation}
where $\PP$ denotes the Poisson extension.
\end{conjecture}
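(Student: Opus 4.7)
My plan is to split the claimed equivalence into necessity (which should be the easier direction, handled by testing on Szeg\H{o} kernels) and sufficiency (which I expect to be the crux, requiring two-weight theory for the Riesz projection).

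For necessity, I would assume $\TT_f\TT_g^*$ is bounded on $H^2(\T)$ with norm $M$ and test against the normalized Szeg\H{o} kernels $K_z(w)=(1-|z|^2)^{1/2}(1-\bar z w)^{-1}$, $z\in\D$. Since $g$ is analytic one checks $\TT_g^* K_z = \overline{g(z)}\,K_z$, and since $f$ is analytic and $fK_z\in L^2(\T)$ one has $P_R(fK_z)=fK_z$, giving
\[
  \|\TT_f\TT_g^* K_z\|_{H^2}^2 \;=\; |g(z)|^2\,\|fK_z\|_{L^2(\T)}^2 \;=\; |g(z)|^2\,\PP(|f|^2)(z) \;\le\; M^2.
\]
The symmetric bound $|f(z)|^2\PP(|g|^2)(z)\le M^2$ follows from $(\TT_f\TT_g^*)^* = \TT_g\TT_f^*$. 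Combining these pointwise estimates, together with a standard argument exploiting analyticity to trade the factor $|f(z)|^2$ (resp.\ $|g(z)|^2$) for the full average $\PP(|f|^2)(z)$ (resp.\ $\PP(|g|^2)(z)$), would then yield \eqref{conj:sar2}.

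For sufficiency, I would translate the question into a two-weight problem for the Riesz projection. Writing $\TT_f\TT_g^* u = P_R(f\overline{g}u)$ on a dense class and dualising, $H^2$-boundedness of $\TT_f\TT_g^*$ is equivalent, after a change of measure, to the two-weight inequality
\[
  \|P_R\varphi\|_{L^2(|f|^{-2}\,dm)} \;\lesssim\; \|\varphi\|_{L^2(|g|^{2}\,dm)}
\]
on a dense class of $\varphi$. In this reformulation, condition \eqref{conj:sar2} reads as a joint $A_2$-type hypothesis on the weight pair $(|g|^2,|f|^{-2})$. The natural strategy is then to dominate $P_R$ by a dyadic Haar shift on $\T$ and to deduce the required two-weight estimate from the $A_2$-type hypothesis, in the spirit of the Nazarov--Treil--Volberg programme.

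The main obstacle is precisely this sufficiency step. Two-weight boundedness of the Hilbert transform (equivalently $P_R$) is known \emph{not} to be governed by a joint $A_2$ condition alone: Sawyer-type testing on indicators of arcs is in general required, and a Poisson/Berezin product condition sits strictly below such testing. Any successful proof would therefore have to exploit the analyticity of $f$ and $g$ in a genuinely non-trivial way, and absent such a device I would anticipate the same mechanism behind this paper's Bergman-space counterexample to obstruct the sufficiency direction on the Hardy side as well.
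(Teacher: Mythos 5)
The statement you were given is not a theorem of the paper: it is a \emph{conjecture}, which the paper records only for context and then explicitly reports was \emph{disproved} by Nazarov in 1997 (see the paragraph in Section~1 following Conjecture~\ref{c.sarasonH}, and \cite{nazarov}). Consequently there is no proof in the paper, and no correct proof exists. Your concluding remark --- that the joint $A_2$-type hypothesis cannot suffice for a two-weight bound on $P_R$ without extra Sawyer-type testing, and that you anticipate the same obstruction as in the Bergman counterexample --- is precisely the state of affairs. That is the right conclusion; the rest of your proposal should be read as evidence \emph{against} the conjecture rather than as a route to proving it.

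Two smaller points about the necessity direction, which is the only direction with a chance of being true. Testing on the normalized Szeg\H{o} kernels, as you do, yields only the one-sided bounds $|g(z)|^2\PP(|f|^2)(z)\le M^2$ and $|f(z)|^2\PP(|g|^2)(z)\le M^2$; note that $|f(z)|^2\le\PP(|f|^2)(z)$ and $|g(z)|^2\le\PP(|g|^2)(z)$ point the \emph{wrong} way for recovering $\PP(|f|^2)(z)\,\PP(|g|^2)(z)\le CM^2$ from these. Upgrading to the full Poisson product bound is not a "standard" consequence of analyticity; it requires a genuinely different test-function computation (Sarason himself only recorded weaker necessary conditions). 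If you want to pursue necessity, you would need to supply that argument explicitly. For sufficiency, as you anticipated, the joint invariant $A_2$ condition is strictly weaker than the Sawyer testing conditions that actually govern the two-weight Hilbert transform/Riesz projection, and Nazarov's construction exhibits analytic $f,g$ for which the gap is strict; this is the mechanism that kills the conjecture.
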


Both in the Bergman space and the Hardy space case, these questions are closely connected to
very interesting questions in Harmonic Analysis, namely two-weight estimates for the Bergman projection, respectively the Riesz projection.

Cruz-Uribe observed \cite{cruz} the following commutative diagram in the case of the Hardy space:
$$
\begin{matrix} 
&& \TT_f \TT_g^* && \\
     &  H^2(\T)   &     {\longrightarrow}     &   H^2(\T)& \\ &&&& \\
 M_{\bar g} & \rotatebox{90}{$\longleftarrow$}  &&  \rotatebox{90}{$\longrightarrow$}  
                                                                    & M_f   \\  && P_R && \\
     &  L^2(\frac{1}{|g|^2}, \T) & {\longrightarrow}& H^2(|f|^2, \T)& \\
\end{matrix}
$$

Here, $M_{\bar g}$, $M_f$ on the vertical sides denote multiplication with the respective 
symbols, and these operators are isometric by definition of the weights.
A similar argument can be made for the Bergman space,
\begin{equation}     \label{eq:bdiagram}
\begin{matrix} 
&& T_f T_g^* && \\
     &  A^2(\D)   &     {\longrightarrow}     &   A^2(\D)& \\ &&&& \\
 M_{\bar g} & \rotatebox{90}{$\longleftarrow$}  &&  \rotatebox{90}{$\longrightarrow$}  
                                                                    & M_f   \\  && P_B && \\
     &  L^2(\frac{1}{|g|^2}, \D) & {\longrightarrow}& A^2(|f|^2, \D)& \\
\end{matrix}
\end{equation}
again with isometric operators on the vertical sides.
One can thus see easily that the top row of each diagram is bounded, if and only if the bottom row is bounded.

Hence the question on the boundedness of Toeplitz products can be translated to the problem of boundedness of the two-weighted Bergman projection
\begin{equation}     \label{eq:berg}
               P_B: L^2(\D, \frac{1}{|g|^2}) \rightarrow    L^2(\D, |f|^2) 
\end{equation}
respectively boundedness of the two-weighted Riesz projection
\begin{equation}   
               P_R: L^2(\T, \frac{1}{|g|^2}) \rightarrow    L^2(\T, |f|^2) 
\end{equation}
in the case of the Hardy space.

This connection motivated  the Sarason conjectures \ref{c.sarasonB}, \ref{c.sarasonH} above.
 Namely, condition (\ref{conj:sar1}) is the natural two-weight form
of the B\'ekoll\'e-Bonami condition $B_2$ for a weight function $w$ on $\D$,  
$$
     \sup_{z \in \D} B(w)(z)    B(w^{-1})(z) < \infty,
$$
which is equivalent to the boundedness of the one-weighted Bergman projection 
\begin{equation}    \label{eq:bb}
               P_B: L^2(\D, w) \rightarrow    A^2(\D, w),
\end{equation}
and also to the boundedness of the maximal one-weighted Bergman projection
\begin{equation}   \label{eq:bbp}
               P^+_B: L^2(\D, w) \rightarrow    L^2(\D, w),
\end{equation}
where 
\begin{equation}
\label{e.maxBproj}
 P^+_B(f):= \int_\D\frac{f(\z)}{|1-\overline{\z}z|^2}dA(\z)
\end{equation}
(see \cite{MR497663}).
In the same way, (\ref{conj:sar2})  is the natural two-weight form
of the invariant Muckenhoupt  condition $A_2$ for a weight function $v$ ,
$$
     \sup_{z \in \D} \PP(v)(z)    \PP(v^{-1})(z) < \infty,
$$
which is equivalent to the boundedness of the one-weighted Riesz projection
\begin{equation}
               P_R: L^2(\D, v) \rightarrow    L^2(\D, v),
\end{equation}
or equivalently, the one-weighted Hilbert transform $H$
\cite{muckenhoupt}.

The problem of classifying those pairs of weights $(\rho,v)$ for which the two-weighted Riesz projection
      \begin{equation}  \label{eq:TwoWeightRiesz}
               P_R: L^2(\D, \rho) \rightarrow    L^2(\D, v),
\end{equation}
or equivalently, the two-weighted Hilbert transform is bounded, is a famous problem in Harmonic Analysis.
For a long time, it was conjectured that a version of  (\ref{conj:sar2}) for general weights $(\rho,v)$, the joint invariant $A_2$ condition
\begin{equation}
            \sup_{z \in \D} \PP(v)(z)    \PP(\rho^{-1})(z) < \infty
\end{equation}
characterises  (\ref{eq:TwoWeightRiesz}). 
This would in particular imply Sarason's conjecture on Hardy spaces. However, F. Nazarov disproved both this conjecture and the Sarason conjecture \ref{c.sarasonH}.
in 1997 \cite{nazarov}. The two-weight Hilbert transform problem, the problem of characterising boundedness of (\ref{eq:TwoWeightRiesz}), has been the subject of intense recent research
activity, see e.g. \cite{volberg}, \cite{1003.1596}, \cite{1001.4043}, \cite{1201.4319}, \cite{1301.4663} and the references therein.

Sarason's Conjecture \ref{c.sarasonB} for Toeplitz products on Bergman spaces, in contrast, has remained open till now.
The purpose of this paper is to provide a counterexample to this conjecture, depending on a new characterisation of bounded Toeplitz products on Bergman space by means of natural test function conditions. Our main results can be summarised as follows:

\begin{theorem}\label{t.conjfails} 
There exist functions $f,g\in L_a^2$ such that $b_{f,g}<\infty$, but $T_fT_g^*$ is not bounded on $L_a^2$.
\end{theorem}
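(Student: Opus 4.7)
The plan is to exploit the gap between the smooth Berezin average, which controls the quantity $b_{f,g}$, and the operator norm of $T_f T_g^{*}$, which is sensitive to finer scales through a testing-type condition on reproducing kernels. Since the paper develops a dyadic model and a test-function characterization of bounded Toeplitz products, I would build the counterexample inside that dyadic model and then transfer it to analytic $f, g\in A^2(\D)$.

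First, fix a standard dyadic tree of Carleson boxes (``top halves'' of dyadic tents) on $\D$ and set up the associated dyadic Bergman projection together with a Haar-type system adapted to the hyperbolic geometry. In this model, $B(|f|^{2})(z)$ is comparable to a weighted average of the Haar-type square coefficients over the ancestors of the box containing $z$, whereas the testing quantity controlling $T_f T_g^{*}$ involves a square function along the same tree branches. I would then construct $f$ and $g$ as finite sums $f=\sum_{k}\alpha_{k}\varphi_{k}$, $g=\sum_{k}\beta_{k}\psi_{k}$, where $\varphi_{k},\psi_{k}$ are Haar-like building blocks supported on carefully nested or staggered generations. The parameters $\alpha_{k},\beta_{k}$ and the supports are tuned so that the \emph{product} $B(|f|^{2})(z)\,B(|g|^{2})(z)$ remains bounded along every branch of the tree (forced e.g.\ by a Cauchy--Schwarz cancellation between the ascending averages of $|f|^{2}$ and $|g|^{2}$), while at a sequence of test vertices of depth $N\to\infty$ the testing norm for the dyadic Toeplitz product grows without bound.

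Second, lift the dyadic counterexample to the analytic setting. Using the characterization established earlier in the paper, boundedness of the continuous $T_f T_g^{*}$ on $A^2(\D)$ is equivalent to the boundedness of a finite family of dyadic-shifted Toeplitz products. I would replace each Haar-type block $\varphi_{k}$ by an analytic surrogate---for instance a lacunary monomial $z^{n_{k}}$ with $n_{k+1}/n_{k}$ large, or a suitably normalized reproducing kernel centered at a hyperbolically well-separated point---and verify that the Berezin condition and the failure of the testing condition both survive the passage to $A^2$, using the off-diagonal decay of the Bergman kernel $(1-\bar\z z)^{-2}$ to dominate cross terms at different scales.

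The main obstacle is this simultaneous balancing. If the bad correlation between $|f|^{2}$ and $|g|^{2}$ is spread on a set that is too thick, the individual Berezin averages blow up in tandem and $b_{f,g}$ explodes; if it is concentrated on too thin a set, the reproducing kernel at any fixed $z$ no longer detects it. The resolution should be a logarithmic multi-scale construction in which each of $N$ scales contributes an $O(1)$ amount to the testing quantity (so that the operator norm is at least of order $N$ on a test kernel of depth $N$) but only a geometrically summable amount to the product $B(|f|^{2})(z)B(|g|^{2})(z)$ at every $z$. Carrying this balance out, while keeping $f,g$ analytic and square-integrable, is the technical heart of the argument.
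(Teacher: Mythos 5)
Your high-level diagnosis is right --- the counterexample must exploit a gap between the smoothed Berezin average and a finer, scale-sensitive testing quantity, and the paper's Theorem \ref{pplus}/Corollary \ref{c.twmBp} machinery is indeed the bridge. But the actual construction you sketch has a genuine gap, and it is precisely at the step you call ``the technical heart.'' The paper does not build $f$ and $g$ symmetrically out of multi-scale blocks and balance them by hand; it fixes one symbol $g$ once and for all, of a very special form, and then lets $f$ range over a function space. Specifically, $g$ is an analytic Lipschitz function satisfying the lower bound $|g(z)|\geq c(1-|z|)$ and vanishing on a Cantor-type subset $E$ of $\T$; this $g$ is manufactured via Dyn{\cprime}kin's free-interpolation theorem (Lemma \ref{dynkin}). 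With such a $g$ in hand, Lemma \ref{ourex} shows that $b_{f,g}<\infty$ is implied by the logarithmic Carleson condition $\gamma(f)<\infty$, while boundedness of $T_fT_g^{*}$ (through Theorem \ref{pplus}) forces the Dirichlet Carleson-measure condition $\delta(f)<\infty$. The argument is then closed \emph{indirectly}: Stegenga's classical example supplies a sequence $(f_n)$ with $\gamma(f_n)$ bounded but $\delta(f_n)\to\infty$, and the closed graph theorem converts this into the nonexistence of a norm comparison, hence a contradiction with the Sarason conjecture.

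The reason your approach would stall is the point the paper itself flags: for analytic $g$, $|g|^{2}$ is subharmonic, which is a severe constraint on what the Berezin transform $B(|g|^{2})$ can look like, and which is exactly why the easy non-analytic choice $g(z)=1-|z|$ cannot be used. Lacunary monomials or reproducing kernels will not give you the lower bound $|g(z)|\gtrsim 1-|z|$ that makes $P_{f,g}^{+}$ dominate the Dirichlet-type operator $R$ in the proof of Lemma \ref{ourex}(ii); and without that lower bound, the crucial link between ``$T_fT_g^{*}$ bounded'' and ``$\delta(f)<\infty$'' disappears. Likewise, your proposed ``logarithmic multi-scale construction in which each of $N$ scales contributes $O(1)$ to the testing quantity but a geometrically summable amount to $B(|f|^{2})B(|g|^{2})$'' is not carried out and, if attempted directly, would essentially amount to reinventing Stegenga's example. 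In short: the paper replaces the hard balancing problem by (a) a careful choice of $g$ via Dyn{\cprime}kin, (b) a reduction to the $\gamma$-versus-$\delta$ gap, and (c) an appeal to Stegenga plus the closed graph theorem --- three ingredients absent from your outline.
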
 

\begin{theorem}
\label{t.main}
Let $P^{+}_{B}(\cdot)$ be the maximal Bergman projection on the disc $\D$, and let $f, g\in A^{2}(\D)$. The following are equivalent
\begin{enumerate}
\item $T_{f}T^{*}_{g}: A^{2}(\D) \mapsto A^{2}(\D)$ is bounded;
\item $P_B( |g| \cdot): L^2(\D, |g|^2) \rightarrow  L^2(\D, |f|^2)$ bounded;
\item $P^+_B( |g| \cdot): L^2(\D, |g|^2) \rightarrow  L^2(\D, |f|^2)$ bounded;
\item \begin{enumerate}
\item $\displaystyle \norm |f|P^{+}_{B}(|g||g| 1_{Q_{I}}).L^{2}.\leq C_{0} \norm |g| 1_{Q_{I}}.L^{2}. , $
\item $\displaystyle\norm |g|P^{+}_{B}(|f||f| 1_{Q_{I}}).L^{2}.\leq C_{0} \norm |f| 1_{Q_{I}}.L^{2}. , $
\end{enumerate}
for all intervals $I\in \T$ and with constant $C_{0}$ uniform on $I$.
\end{enumerate}
\end{theorem}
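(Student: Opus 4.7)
My plan is to prove the equivalence as a loop $(1) \Leftrightarrow (2) \Leftarrow (3) \Leftarrow (4) \Leftarrow (2)$. For $(1) \Leftrightarrow (2)$, I simply read off the commutative diagram \eqref{eq:bdiagram}: the isometries $M_{\bar g}\colon A^2(\D) \to L^2(\D,|g|^{-2})$ and $M_f\colon A^2(\D,|f|^2) \to A^2(\D)$ translate boundedness of $T_fT_g^*$ into a two-weight bound on $P_B$, which, after absorbing the multiplier $|g|$ through the appropriate substitution, becomes exactly (2). The implication $(3) \Rightarrow (2)$ follows at once from the pointwise inequality $|P_B h| \le P_B^+ |h|$, a direct consequence of the triangle inequality inside the Bergman kernel.

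For $(2) \Rightarrow (4)$, I specialise (2) to the test function $\phi = |g| 1_{Q_I}$: this bounds $\int_\D |P_B(|g|^2 1_{Q_I})|^2 |f|^2 \, dA$ by a constant times $\int_{Q_I}|g|^2\, dA$. A standard comparison, exploiting positivity of the test function together with the analyticity of $f,g$, upgrades $|P_B|$ to $P_B^+$ and yields (4a); exchanging the roles of $f$ and $g$ via the natural duality of the two-weight problem produces (4b).

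The main content is $(4) \Rightarrow (3)$, proved via the dyadic model announced in the abstract. Using random shifted dyadic grids $\mathcal D_\omega$ on $\T$ and the associated Carleson boxes $Q_I \subset \D$, one has the averaging majorisation
\[
\frac{1}{|1-\bar\zeta z|^2} \;\lesssim\; \E_\omega \sum_{I\in\mathcal D_\omega} \frac{1}{|I|^2}\, 1_{Q_I}(\zeta)\, 1_{Q_I}(z),
\]
which reduces the bound on $P_B^+$ to one for the positive dyadic Bergman shifts $S_{\mathcal D}h = \sum_{I\in\mathcal D}|I|^{-2}\langle h,1_{Q_I}\rangle 1_{Q_I}$. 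For each such shift, a two-weight Sawyer-type theorem characterises the bound $\phi \mapsto S_{\mathcal D}(|g|\phi)\colon L^2(|g|^2) \to L^2(|f|^2)$ in terms of testing on indicators of Carleson boxes in both the direct and the dual direction, which are exactly (4a) and (4b). Averaging over $\omega$ and transferring back to the continuous kernel closes the argument.

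The principal obstacle is the Sawyer-type characterisation for the positive dyadic Bergman shift itself. Its proof requires a parallel corona / stopping-time decomposition adapted to the tree structure of Carleson tents on the disc, matching precisely the testing form in (4); tracking constants carefully through the random-grid averaging then transfers the estimate to the continuous operator $P_B^+$ without loss. This dyadic testing theorem is the technical engine underpinning the entire approach.
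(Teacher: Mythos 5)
The loop you propose, $(1)\Leftrightarrow(2)\Rightarrow(4)\Rightarrow(3)\Rightarrow(2)$, has the same skeleton as the paper, and the pieces $(1)\Leftrightarrow(2)$, $(3)\Rightarrow(2)$, and $(4)\Rightarrow(3)$ are handled correctly (the paper uses two fixed shifted grids $\beta\in\{0,1/3\}$ rather than a random average, but that is immaterial). The implication $(4)\Rightarrow(3)$ via a Sawyer-type testing theorem for positive dyadic operators (the paper's Theorem~\ref{t.maind} and Corollary~\ref{c.twmBp}) is indeed the engine for that leg.

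The genuine gap is in your step $(2)\Rightarrow(4)$, which you dismiss as ``a standard comparison, exploiting positivity of the test function together with the analyticity of $f,g$, upgrades $|P_B|$ to $P_B^+$''. There is no such standard comparison: pointwise one has $|P_B h|\le P_B^+|h|$, not the reverse, so a bound on $P_B$ applied to a positive function gives \emph{no} a priori control on $P_B^+$ applied to the same function. Passing from the cancellative estimate to the positive one is precisely the principal difficulty of the paper, and it is what makes the Bergman case different from the Hardy case; the introduction even flags it as ``somewhat surprisingly'' possible for weights of the special analytic form $|f|^2,\,1/|g|^2$. The paper spends all of Section~\ref{sec:pplus} (Theorem~\ref{pplus}, relying on Lemmas~\ref{kernelest}, \ref{p1est}, \ref{partialderiv}) establishing this. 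The mechanism is a kernel identity expressing $\frac{1}{|1-\bar\zeta z|^2}$ in terms of $-\operatorname{Re}\frac{\bar\zeta z}{(1-\bar\zeta z)^2}$ plus remainder kernels damped by powers of $(1-|z\zeta|^2)$; the remainder operators are controlled by $b_{f,g}$ (which is necessary for (1), a fact your argument does not invoke), and the final closing of the loop uses an integration-by-parts/Stokes-formula argument applied to $\partial\bar\partial(P_{f,g}^+|u|)^2$ and the inequalities \eqref{subharmonic}, \eqref{wsubharmonic} for subharmonic functions. None of this is captured by ``a standard comparison''. Without supplying an argument of this type, your proof of $(2)\Rightarrow(4)$ does not go through, and the equivalence is not established.
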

Here, the first equivalence is Cruz-Uribe's observation, the second equivalence is proved in Section \ref{sec:pplus}, and the last equivalence 
is proved in Section \ref{sec:pos}. We will prove Theorem \ref{t.conjfails} in Section \ref{sec:counter}. 
Section \ref{sec:appl} is devoted to an application to the proof of sharp estimates for one-weighted Bergman projection.

Sufficient conditions close to Sarason's condition \ref{conj:sar1}  for the boundedness of Toeplitz products in the style of the so-called bump conditions can be found in  \cite{zheng} and in \cite{michal}.

In spite of the formal similarities of the Sarason conjectures in the Hardy space and in the Bergman space settings, the problem is quite different in both settings. 

Some aspects of the Bergman space setting are easier, because cancellation plays much less of a role in this setting, as already apparent from the equivalence of (\ref{eq:bb}) and (\ref{eq:bbp}).
To characterise boundedness of Toeplitz products, our strategy is thus to replace $P_B$ by $P_B^+$,  and to 
use two-weight techniques for positive operators  in Section \ref{sec:pos}, via a suitable dyadic model operator introduced in Section \ref{sec:dyadic}.
Somewhat surprisingly, it turns out that this is possible for the weights $\frac{1}{|g|^2}$, $|f|^2$ in (\ref{eq:bdiagram}). This is the equivalence of (2)
and (3) in Theorem \ref{t.main}, which will
be proved in Section \ref{sec:pplus}, and
 allows us to finally characterise the boundedness of Toeplitz products in Bergman space in terms of test function.

On the other hand, the special r\^{o}le played by weights coming from analytic functions, which we exploit in Section   \ref{sec:pplus} and which is in contrast to
the situation on the Hardy space, makes it much more difficult to find a counterexample of the Sarason Conjecture on Bergman space (\ref{conj:sar1}). We prove a
counterexample to the Sarason conjecture \ref{c.sarasonB}
in Section \ref{sec:counter}. For non-analytic symbols, or even one non-analytic symbol, such examples are much easier to find. In this case, the function 
$g$ in Lemma \ref{dynkin}, the construction of which forms the main part of the counterexample, can just be replace by $1 - |z|$.

\section{A dyadic model for the maximal Bergman projection}
\label{sec:dyadic}

In this section we aim to find a dyadic operator that models the behaviour of the maximal Bergman projection. To be precise, we find a dyadic averaging operator that is pointwise comparable to the maximal Bergman projection. 

The use of translations of a dyadic system to extend results from a dyadic setting to a continuous one is a well known tool. These ideas go back to the work of Garnett and Jones \cite{MR658065}, Christ \cite{MR951506} and also Tao Mei \cite{MR1993970}. In our case, we will use two of these dyadic systems to recover the maximal Bergman kernel from dyadic operators.

For $\beta\in \{0,1/3\}$, we define  $$\displaystyle \mathcal D^{\beta}:= \left\{ [2^{-j}2\pi m +2\pi\frac{1}{3},\,2^{-j}2\pi (m+1)+2\pi\frac{1}{3}) : m\in \mathbb N,\, j\in \mathbb N,\, j\geq 0, \, 0\leq m \leq 2^{j}  \right\}.$$ The key fact is that any interval in the torus is contained in one interval belonging to these two families of dyadic grids, moreover the measure of the two intervals is essentially the same. We formulate the result below. Its proof is a well-known exercise that the reader can find in many places, e.g. \cite{MR1993970}.

\begin{lemma}\label{l.grids}
Let $I$ be any interval in $\T$. Then there exists an interval $K\in \mathcal D^{\beta}$ for some $\beta\in \{0,1/3\}$ such that $I\subset K$ and $|K|\leq 6|I|$.
\end{lemma}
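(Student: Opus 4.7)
The plan is to produce, for every interval $I\subset\T$, a dyadic scale at which either the grid $\mathcal D^{0}$ or its shifted counterpart $\mathcal D^{1/3}$ contains $I$ inside a single member, with the containing interval only a bounded factor longer than $I$. The shift $1/3$ is chosen precisely because $1/3$ stays bounded away from every dyadic rational, so that endpoints of intervals of $\mathcal D^{0}$ fall safely inside the interiors of intervals of $\mathcal D^{1/3}$ at the same scale.

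First I would fix the scale. Let $j\geq 0$ be the integer for which $L:=2^{-j}\cdot 2\pi$ lies in $[3|I|,\,6|I|]$; such a $j$ exists because dyadic lengths double, so every interval of ratio $2$ contains one. This already guarantees the bound $|K|=L\leq 6|I|$ for the interval $K$ eventually produced. Next comes a dichotomy. If $I$ is contained in some $K\in\mathcal D^{0}$ of length $L$, we are done. Otherwise $I$ meets an endpoint $e=m_{0}\cdot L$ of the grid $\mathcal D^{0}$, and since $|I|\leq L/3$ the whole of $I$ sits in the symmetric neighbourhood $[e-L/3,\,e+L/3]$.

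The heart of the proof is to show this neighbourhood lies inside a single interval $K'\in\mathcal D^{1/3}$. Let $K'=[a,a+L)$ with $a=mL+2\pi/3$ be the member of $\mathcal D^{1/3}$ at scale $L$ containing $e$. Then
\[
|e-a|=2\pi\left|\frac{m_{0}-m}{2^{j}}-\frac{1}{3}\right|=\frac{2\pi}{3\cdot 2^{j}}\bigl|\,3(m_{0}-m)-2^{j}\,\bigr|.
\]
Since $2^{j}$ is never divisible by $3$, the integer $|3(m_{0}-m)-2^{j}|$ is at least $1$, so $|e-a|\geq L/3$; the same bound holds for $|(a+L)-e|$. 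Therefore $[e-L/3,\,e+L/3]\subset K'$, and combining with the previous step gives $I\subset K'$ with $|K'|=L\leq 6|I|$.

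The only mild technicality is handling intervals that wrap around the identification point of $\T$, which is dealt with by first choosing a convenient fundamental domain before applying the argument. The single genuine point in the proof is the arithmetic observation that $|1/3-k/2^{j}|\geq 1/(3\cdot 2^{j})$ for all integers $k$; everything else is routine geometric bookkeeping.
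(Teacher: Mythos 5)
Your proof is correct, and it is precisely the standard argument the paper defers to (the paper gives no proof of its own and cites Mei's note \cite{MR1993970}). The two real ingredients you isolate — (i) pick the unique dyadic scale $L$ with $3|I|\le L\le 6|I|$, and (ii) the arithmetic fact $\bigl|\tfrac13-\tfrac{k}{2^{j}}\bigr|\ge\tfrac{1}{3\cdot 2^{j}}$ since $2^{j}$ is never a multiple of $3$, which forces every grid point of $\mathcal D^{0}$ at scale $L$ to sit at distance at least $L/3$ from every grid point of $\mathcal D^{1/3}$ at the same scale — are exactly what make the $1/3$--shift work. One tiny cleanup worth making: when $|I|>\pi/3$ (so $6|I|\ge 2\pi$) there is no admissible $j\ge 0$ with $2^{-j}2\pi\ge 3|I|$; in that range one simply takes $K$ to be the scale-$0$ interval (the whole circle), which trivially contains $I$ and has $|K|=2\pi\le 6|I|$. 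That observation, together with the wrap-around remark you already flagged, closes all edge cases.
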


We define the family of dyadic operators that will control the maximal Bergman projection \eqref{e.maxBproj} as the following.

\begin{definition}\label{d.dyadicmodel}
Let $\mathcal D^{\beta}$ be one of the dyadic grids in $\T$ described above. For all $z,\xi \in \D$, we define the positive dyadic kernel
\begin{equation}\label{e.dyadicp}
K^{\beta}(z,\xi):=\sum_{I\in \mathcal D^{\beta}} \frac{1_{Q_{I}}(z)1_{Q_{I}}(\xi)}{|I|^{2}},
\end{equation}
where $Q_{I}$ is the Carleson box associated to $I$, namely
\begin{equation}
\label{e.carbox}
Q_{I}:=\{r\textup{e}^{i\theta}:\,\, 1-|I|\leq r<1 \text{ and } \, \textup{e}^{i\theta}\in I\},
\end{equation}
and $|I|$ stands for the normalized length of the interval. Associated to this kernel we define the following dyadic operator
\begin{equation}
\label{e.drm}
P^{\beta}f(z):=\sum_{I\in\mathcal D^{\beta}}\langle f, \frac{1_{Q_{I}}}{|I|^{2}}  \rangle 1_{Q_{I}}(z).
\end{equation}

\end{definition}

The following proposition proves the relation between the kernels \eqref{e.kernel} and the dyadic kernels described in \eqref{e.dyadicp}.

\begin{proposition}\label{p.pointwise}
There exist constants $C$ and $\tilde{C}$ such that for every $\beta_{0}\in \{0, 1/3\}$, every $f\in L^{1}_{loc}$ and $z\in \D$,
\begin{equation}
\label{e.dyadicmp}
\tilde{C} P^{\beta_{0}}f(z)\leq P^{+}_{B}f(z)\leq C\sum_{\beta\in  \{0,1/3\}}P^{\beta}f(z),
\end{equation}
where $P_{B}^{+}$ is the maximal Bergman projection as defined in \eqref{e.maxBproj} and $P^{\beta}$ the dyadic operator described in \eqref{e.drm}.

\end{proposition}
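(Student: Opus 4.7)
The plan is to reduce \eqref{e.dyadicmp} to a pair of pointwise kernel comparisons — one for each direction — and then integrate against a nonnegative $f$, which we may assume throughout by linearity and positivity of the kernels involved. The main analytic input is the standard Bergman-kernel asymptotic
$$
|1-\bar\zeta z| \asymp (1-|z|) + (1-|\zeta|) + d_{\T}(\arg z,\arg\zeta) =: s(z,\zeta),
$$
valid when $|z|,|\zeta|\geq \tfrac12$; the complementary case gives $|1-\bar\zeta z|\asymp 1$ and can be absorbed by choosing $K=\T$. The main combinatorial input is Lemma \ref{l.grids}.

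For the upper bound $P_{B}^{+}f(z)\leq C\sum_{\beta}P^{\beta}f(z)$, I would establish the pointwise kernel inequality
$$
\frac{1}{|1-\bar\zeta z|^{2}} \lesssim \sum_{\beta\in\{0,1/3\}} K^{\beta}(z,\zeta).
$$
Given $z,\zeta\in \D$ I choose an arc $\tilde I\subset\T$ of length comparable to $s(z,\zeta)$, at least $\max(1-|z|,1-|\zeta|)$, and containing both $\arg z$ and $\arg\zeta$. Lemma \ref{l.grids} then furnishes $\beta\in\{0,1/3\}$ and $K\in\mathcal D^{\beta}$ with $\tilde I\subset K$ and $|K|\leq 6|\tilde I|$. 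By construction $z,\zeta\in Q_{K}$ and $|K|\asymp s(z,\zeta)\asymp |1-\bar\zeta z|$, so the single summand $1_{Q_{K}}(z)1_{Q_{K}}(\zeta)/|K|^{2}$ in $K^{\beta}(z,\zeta)$ already dominates $1/|1-\bar\zeta z|^{2}$; integrating against $f$ gives the claim.

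For the lower bound $\tilde C P^{\beta_{0}}f(z)\leq P_{B}^{+}f(z)$, I would prove for each fixed $\beta_{0}\in\{0,1/3\}$ the complementary pointwise bound
$$
K^{\beta_{0}}(z,\zeta) \lesssim \frac{1}{|1-\bar\zeta z|^{2}}.
$$
For a fixed pair $(z,\zeta)$, the intervals $I\in\mathcal D^{\beta_{0}}$ with $z,\zeta\in Q_{I}$ form a (possibly empty) nested chain in which consecutive sizes differ by a factor of $2$. If the chain is empty the inequality is trivial; otherwise let $I^{*}$ be its smallest member. From $\arg z,\arg\zeta\in I^{*}$ and $1-|z|,1-|\zeta|\leq |I^{*}|$ we obtain $s(z,\zeta)\lesssim |I^{*}|$, whence $1/|I^{*}|^{2}\lesssim 1/|1-\bar\zeta z|^{2}$. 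Summing the geometric series over the ancestors of $I^{*}$,
$$
K^{\beta_{0}}(z,\zeta)=\sum_{k\geq 0}\frac{1}{(2^{k}|I^{*}|)^{2}}=\frac{4}{3|I^{*}|^{2}} \lesssim \frac{1}{|1-\bar\zeta z|^{2}},
$$
and integration against $f$ yields the desired operator inequality.

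The one delicate point is the geometric matching in the upper bound: every continuous scale $|1-\bar\zeta z|$ must be realised, up to a constant, by a Carleson box in one of the two shifted grids. This is exactly the content of Lemma \ref{l.grids}; the $2\pi/3$ shift is introduced so that an arc $\tilde I$ of length $\asymp s(z,\zeta)$ cannot straddle a dyadic endpoint in both $\mathcal D^{0}$ and $\mathcal D^{1/3}$ simultaneously. Once Lemma \ref{l.grids} is in hand, the rest is bookkeeping with Bergman-kernel size estimates and a geometric sum.
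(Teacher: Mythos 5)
Your argument is correct and follows essentially the same route as the paper: both reduce the operator inequalities to pointwise kernel comparisons, both prove the upper bound by locating an arc of length $\asymp |1-\bar\zeta z|$ containing $\arg z,\arg\zeta$ and invoking Lemma \ref{l.grids} to place it in a dyadic Carleson box of one of the two grids, and both prove the lower bound by summing the geometric series over the nested chain of dyadic boxes containing $z,\zeta$ and comparing the smallest one to $|1-\bar\zeta z|$. The only differences are presentational (your explicit use of the quantity $s(z,\zeta)$ versus the paper's two-case computation of $|1-z\bar\xi|^2$).
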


\begin{proof}[ Proof of Proposition \eqref{p.pointwise}]
Let $K(z,\xi)$ denote the kernel associated to the maximal Bergman projection, i.e.,

\begin{equation}
\label{e.kernel}
K(z,\xi)= \frac{1}{|1-z\bar{\xi}|^{2}}.
\end{equation}
Then it is enough to prove that there exist constants $C$ and $\tilde{C}$ such that for every $\beta_{0}$ and every $z, \xi$ in $\D$ we have the following estimates on the kernel,

\begin{equation}
\label{e.pointwisek}
\tilde{C} K^{\beta_{0}}(z,\xi) \leq  K(z,\xi) \leq C\sum_{\beta\in  \{0,1/3\}} K^{\beta}(z,\xi)
\end{equation}

Let us first prove the left hand side of \eqref{e.pointwisek}. We consider $z=r_{0}\textup{e}^{i\theta_{0}}$ and $\xi=s_{0}\textup{e}^{i\varphi_{0}}$. 
Without loss, we can assume that $r_0 \le s_0$. 
We choose $I_{0}\in\mathcal D^{\beta_{0}}$ to be the minimal interval such that $|I_{0}|\geq 1-r_{0}$ and $\textup{e}^{i\theta_{0}},\, \textup{e}^{i\varphi_{0}}\in I_{0}$. Then, it is easy to see that $z,\xi\in Q_{I_{0}}$. It could be that such an interval doesn't exist, in that case the inequality is trivially true.  From  $z,\xi\in Q_{I_{0}}$ we can deduce
\begin{equation}
\label{e.pint}
\sum_{I\in \mathcal D^{\beta}} \frac{1_{Q_{I}(z)}1_{Q_{I}(\xi)}}{|I|^{2}}=\sum_{I, I_{0}\subset I}\frac{1}{|I|^{2}} \leq C \frac{1}{|I_{0}|^{2}}.
\end{equation}
To conclude the proof of the left hand side, we need to show
\begin{equation}
\label{e.nts}
|1-z\bar{\xi}|^{2}\leq C|I_{0}|^{2},
\end{equation}
for some uniform constant $C$.
We can write $|1-z\bar{\xi}|^{2}$ as 
\begin{equation}     \label{eq:angle}
|1-z\bar{\xi}|^{2}=(1-r_{0}s_{0})^{2}+4r_{0}s_{0}\sin^{2}(\frac{\theta_{0}-\varphi_{0}}{2}).
\end{equation}

We distinguish two cases, when $(1-r_{0}s_{0})^{2}$ is the majorant term, and when $4r_{0}s_{0}\sin^{2}(\frac{\theta_{0}-\varphi_{0}}{2})$ is the majorant.

\begin{enumerate} 
\item Case 1. If $(1-r_{0}s_{0})^{2}>4r_{0}s_{0}\sin^{2}(\frac{\theta_{0}-\varphi_{0}}{2})$,
then 
$$
|1-z\bar{\xi}|^{2}\leq 2(1-r_{0}s_{0})^{2}\leq 8|I_{0}|^{2},
$$
as desired.

\item Case 2. Suppose on the contrary that $(1-r_{0}s_{0})^{2}\leq 4r_{0}s_{0}\sin^{2}(\frac{\theta_{0}-\varphi_{0}}{2})$.
Since $\textup{e}^{i\theta_{0}},\, \textup{e}^{i\varphi_{0}}\in I_{0}$, we know that $|I_{0}|\geq |\theta_{0}-\varphi_{0}|$. Then
$$
|1-z\bar{\xi}|^{2}\leq 8r_{0}s_{0}\sin^{2}(\frac{\theta_{0}-\varphi_{0}}{2})\leq 2|\theta_{0}-\varphi_{0}|^{2}\leq 2|I_{0}|^{2},
$$
as desired. 

\end{enumerate}
Therefore we have concluded the proof of the left hand side of \eqref{e.pointwisek}. We now turn to the right hand inequality in \eqref{e.pointwisek}. Once again let us fix $z,\xi \in \D$, and write them as before as
$z=r_{0}\textup{e}^{i\theta_{0}}$ and $\xi=s_{0}\textup{e}^{i\varphi_{0}}$.  

It is enough to prove the existence of an interval $I_{0}$ in $\T$ such that $z,\xi\in Q_{I_{0}}$  and $|I_{0}|^{2}\simeq |1-z\bar{\xi}|^{2}$. If such an interval exists, by Lemma $\ref{l.grids}$, we find $K\in \mathcal D^{\beta}$ for some $\beta\in \{0, 1/3\}$ such that $I_{0}\subset K$ and $|K|\leq 6|I_{0}|$. Now the proof of the proposition follows from the set of inequalities below:

\begin{eqnarray*}
\frac{1}{|1-z\bar{\xi}|^{2}}& \lesssim & \frac{1}{|I_{0}|^{2}}\\
 & \lesssim &\frac{1}{36|K|^{2}} \\
 & \leq & C \sum_{\substack{I\in  \mathcal D^{\beta}\\ K\subset I}}\frac{1_{Q_{I}}(z)1_{Q_{I}}(\xi)}{|I|^{2}}\\
 & \leq & C  \sum_{\beta\in \{0,1/3\}} K^{\beta}(z,\xi).
\end{eqnarray*}

Thus we have reduced the problem to prove the existence $I_{0}$ interval in $\T$ such that $z,\xi\in Q_{I_{0}}$  and $|I_{0}|^{2}\simeq  |1-z\bar{\xi}|^{2}$. Notice 
than in the normalized arc measure $|\cdot|$, we will always have $|\theta_{0}-\varphi_{0}|\leq 1/2$, and since $|\sin x|\simeq  |x|$ for $|x|\leq \pi/2$, we have

\begin{equation}
\label{e.eqnorm}
|1-z\bar{\xi}|^{2}\simeq  (1-r_{0}s_{0})^{2}+r_{0}s_{0}|\theta_{0}-\varphi_{0}|^{2} \simeq     (1-r_{0}^2)^{2}+ |\theta_{0}-\varphi_{0}|^{2}                              .
\end{equation}
by (\ref{eq:angle}).
Let us choose $I_{0}$ to be a minimal interval such that
$$
|I_{0}|^{2}= \max((1-r_{0}^2)^{2},\, |\theta_{0}-\varphi_{0}|^{2})
$$
and $\textup{e}^{i\theta_{0}}, \textup{e}^{i\varphi_{0}}\in I_{0}$. It is easy to see that $z,\xi\in Q_{I_{0}}$. We have to  prove that $|I_{0}|^{2}\simeq  |1-z\bar{\xi}|^{2}$. But this follows directly from 
(\ref{e.eqnorm}).

This finishes the proof of the proposition.

\end{proof}


\section{Two weight estimates for the Maximal Bergman Projection}
\label{sec:pos}

In this section we establish two-weight estimates for the maximal Bergman projection. We start by providing a two-weight characterization of boundedness for general dyadic positive operators, to conclude the desired estimates for the maximal Bergman projection as a consequence of the dyadic result and inequalities  \eqref{e.dyadicmp}.

There are three equivalent formulations for two weighted inequalities that we will use in turn. A weight function will be an nonnegative measurable function on 
$\mathbb{R^n}$, not necessarily locally integrable. 
Let $w,v$ be weight functions in $\mathbb R^{n}$, let $1<p<\infty$ and $p'$ its dual exponent. We define $\sigma:=v^{1-p'}$, which is usually called the dual weight of $v$. Let $T$ be an operator. Then the following are equivalent:

\begin{equation} \label{e.oldtwo}
T:\, L^{p}(v)\mapsto L^{p}(w)
\end{equation}

\begin{equation} \label{e.standtwo}
T(\sigma\cdot):\, L^{p}(\sigma)\mapsto L^{p}(w)
\end{equation}

\begin{equation} \label{e.aletwo}
w^{1/p}T(\sigma^{1/p'}\cdot):\, L^{p}\mapsto L^{p}.
\end{equation}

In this section we will mostly use \eqref{e.standtwo}, although for the Sarason problem, \eqref{e.aletwo} is more natural and will frequently appear.

Throughout this section, we will denote the expectation of a function $f$ over a cube $Q$ by
$$
\mathbb E_{Q}|f|,
$$
and the expectation of a function $f$ over a cube $Q$ with respect to a weight $\sigma$ will be denoted by
$$
\mathbb E^{\sigma}_{Q}|f|.
$$
We consider a dyadic grid in $\mathbb R^{n}$ and denote it by $\mathcal D$.  The class of operators we are interested in are dyadic positive operators of the form
\begin{equation}
\label{e.dyadop}
T(f):=\sum_{Q\in \mathcal D}\tau_{Q} (\mathbb E_{Q}|f|)1_{Q},
\end{equation}
where $\tau_{Q}$ is a sequence of nonnegative scalars and $1_{E}$ indicates the characteristic function on the set $E$.

Given two weights $w$ and $\sigma$, we aim to characterise the boundedness of the operator $T$ in the two-weight setting. More precisely,
 we state the question as follows:

\begin{question}\label{q.twow}
Characterize the pairs of weights $w$ and $\sigma$ for which
\begin{equation}
\label{e.question}
T(w\cdot)\mid L^{p}(w)\mapsto  L^{p}(\sigma)   \text{ is bounded.}
\end{equation}
\end{question}

The following theorem provides an answer to this question. In this precise form, it is due to Lacey, Sawyer and Uriarte-Tuero \cite{0911.3437}. 
We present a simplified version of their original proof. Our proof can also be adapted to the disc, with the Carleson cubes associated to a dyadic grid in $\T$ as the dyadic family.

\begin{theorem}
\label{t.maind}
Let $w,\sigma$ be two weights and let $T$ be a dyadic positive operators as in \eqref{e.dyadop}. Then 
\begin{equation}    \label{eq:two-weight-pos}
T(w\cdot)\mid L^{p}(w)\mapsto  L^{p}(\sigma)   
\end{equation}
is bounded, if and only if
  \begin{equation} 
 \label{e.test}  \norm T(w 1_{Q}).L^{p}(\sigma).^{p}\leq C_{0} w(Q), 
 \end{equation}
and 
\begin{equation}
\label{e.testd} 
\norm T^{*}(\sigma 1_{Q}).L^{p}(w).^{p}\leq C_{0}^{*}\sigma(Q), 
\end{equation}

for all $Q$ dyadic cube in $\mathcal D$, and constants $C_{0}$ and $C_{0}^{*}$ independent of the cubes $Q$. Moreover, there exists a constant $c$
independent  of  $T$ and $w$, $\sigma$, such that
$$
\|T(w\cdot)\|_{L^{p}(w)\to  L^{p}(\sigma)}   \le c (C_0 + C_0^*).
$$
\end{theorem}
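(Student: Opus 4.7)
Necessity of (\ref{e.test}) and (\ref{e.testd}) is immediate by testing (\ref{eq:two-weight-pos}) on indicators $f = 1_Q$, and, since $T$ is formally self-adjoint with respect to Lebesgue measure, by testing the dual inequality (from $L^{p'}(\sigma)$ to $L^{p'}(w)$) on $g = 1_Q$. For sufficiency, a standard exhaustion lets me work inside a fixed top cube $Q^0$, and duality together with the identity $\mathbb{E}_Q(wf) = \tfrac{w(Q)}{|Q|}\mathbb{E}_Q^w(f)$ reduces (\ref{eq:two-weight-pos}) to the bilinear estimate
\begin{equation*}
\Lambda(f,g) := \sum_{Q \in \mathcal{D}} \tau_Q\, \frac{w(Q)\sigma(Q)}{|Q|}\, \mathbb{E}_Q^w(f)\, \mathbb{E}_Q^\sigma(g) \;\lesssim\; (C_0 + C_0^*)\, \|f\|_{L^p(w)}\, \|g\|_{L^{p'}(\sigma)}
\end{equation*}
for all nonnegative $f, g$.

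The engine of the proof is a parallel corona decomposition. Build stopping families $\mathcal{F}, \mathcal{G} \subseteq \mathcal{D}$ rooted at $Q^0$: the $\mathcal{F}$-children of $F$ are the maximal cubes $Q \subsetneq F$ with $\mathbb{E}_Q^w f > 2\, \mathbb{E}_F^w f$, and $\mathcal{G}$ is defined symmetrically from $g$ and $\sigma$. Let $\pi_\mathcal{F}(Q)$, $\pi_\mathcal{G}(Q)$ denote the minimal members of each family containing $Q$. By construction, $\mathbb{E}_Q^w f \le 2\, \mathbb{E}_{\pi_\mathcal{F}(Q)}^w f$ together with its $\sigma$-analogue for $g$, and both families satisfy Carleson packing, $\sum_{F' \subseteq F,\, F' \in \mathcal{F}} w(F') \lesssim w(F)$ and $\sum_{G' \subseteq G,\, G' \in \mathcal{G}} \sigma(G') \lesssim \sigma(G)$. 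The standard dyadic Carleson embedding theorem then yields
\begin{equation*}
\sum_{F \in \mathcal{F}} (\mathbb{E}_F^w f)^p\, w(F) \;\lesssim\; \|f\|_{L^p(w)}^p, \qquad \sum_{G \in \mathcal{G}} (\mathbb{E}_G^\sigma g)^{p'}\, \sigma(G) \;\lesssim\; \|g\|_{L^{p'}(\sigma)}^{p'}.
\end{equation*}

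Now I group the cubes $Q$ in $\Lambda(f,g)$ by the pair $(\pi_\mathcal{F}(Q), \pi_\mathcal{G}(Q))$, which must be nested since $Q$ lies in both. Freezing averages via the stopping bounds, the task reduces to controlling
\begin{equation*}
\sum_{F \subseteq G} \mathbb{E}_F^w(f)\, \mathbb{E}_G^\sigma(g)\, A(F,G), \qquad A(F,G) := \!\!\!\!\!\! \sum_{\substack{Q: \pi_\mathcal{F}(Q)=F \\ \pi_\mathcal{G}(Q)=G}} \!\!\!\!\!\! \tau_Q\, \frac{w(Q)\sigma(Q)}{|Q|},
\end{equation*}
together with the symmetric sum over $G \subsetneq F$. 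For fixed $G$, the observation $\tau_Q\, \mathbb{E}_Q(w 1_G) = \tau_Q w(Q)/|Q|$ for $Q \subseteq G$ means that the inner $\sigma$-weighted sum is a pairing of $T(w 1_G)$ against a piecewise-constant function on $G$; the testing hypothesis (\ref{e.test}) then delivers a bound of the form $C_0^{1/p} w(G)^{1/p}$ times the $L^{p'}(\sigma)$-norm of that function, and H\"older in $F$ closes the per-$G$ estimate. A further H\"older in $G$ combined with the two Carleson embeddings above produces a total bound of $C_0\, \|f\|_{L^p(w)}\,\|g\|_{L^{p'}(\sigma)}$; the symmetric sum is treated identically with (\ref{e.testd}) and contributes $C_0^*$. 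The main obstacle I anticipate is precisely this inner pairing: one must orchestrate the two corona structures so that the $\sigma$-average of $g$ factors out within each $\mathcal{G}$-corona, the testing condition is invoked exactly once at the level of $G$, and the overlap of $\mathcal{F}$-stopping cubes inside $G$ is absorbed by Carleson packing, in order to preserve the linear dependence on $C_0 + C_0^*$.
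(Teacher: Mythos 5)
Your proposal attempts a \emph{parallel} corona decomposition (two independent stopping families $\mathcal{F}$ for $(f,w)$ and $\mathcal{G}$ for $(g,\sigma)$), whereas the paper's proof, following Treil \cite{1201.1455}, first sorts the cubes by the comparison $(\mathbb{E}_Q^w f)^p w(Q) \gtrless (\mathbb{E}_Q^\sigma g)^{p'}\sigma(Q)$ into $\mathcal{Q}_1\cup\mathcal{Q}_2$ and then builds a \emph{single} corona on $(f,w)$ inside $\mathcal{Q}_1$ (resp.\ on $(g,\sigma)$ inside $\mathcal{Q}_2$). This is not a cosmetic difference, and it is exactly where your argument has a gap.

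Your necessity argument is fine. The trouble is in the step you yourself flag as the obstacle. Having grouped cubes by $\bigl(\pi_{\mathcal F}(Q),\pi_{\mathcal G}(Q)\bigr)$, the testing condition naturally delivers, for each $F$,
\begin{equation*}
A(F,G)\ \le\ \int T_{\mathrm{in},F}(w1_F)\,\sigma\ \le\ \|T_{\mathrm{in},F}(w1_F)\|_{L^p(\sigma)}\,\sigma(F)^{1/p'}\ \le\ C_0^{1/p}\,w(F)^{1/p}\,\sigma(F)^{1/p'}.
\end{equation*}
After H\"older in $F$ and then in $G$, the $g$-side factor becomes
\begin{equation*}
\Bigl(\ \sum_{G\in\mathcal G}\bigl(\mathbb E_G^\sigma g\bigr)^{p'}\sum_{F:\ \pi_{\mathcal G}(F)=G}\sigma(F)\ \Bigr)^{1/p'},
\end{equation*}
and you would need $\sum_{F:\ \pi_{\mathcal G}(F)=G}\sigma(F)\lesssim\sigma(G)$ to apply the Carleson embedding for $g$. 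But the Carleson packing your $\mathcal{F}$ family enjoys is a $w$-Carleson bound, $\sum_{F'\subseteq F}w(F')\lesssim w(F)$; it says nothing about the $\sigma$-mass of the $\mathcal{F}$-stopping cubes. Indeed, taking $w$ spread out and $\sigma$ concentrated near a point, one can produce a nested chain $F_0\supsetneq F_1\supsetneq\cdots\supsetneq F_k$ of legitimate $\mathcal{F}$-stopping cubes inside a single $\mathcal{G}$-corona $G$ with $\sigma(F_j)\simeq\sigma(G)$ for every $j$, so $\sum_j\sigma(F_j)\simeq k\,\sigma(G)$ with $k$ unbounded. Your proposed pairing ``$T(w1_G)$ against a piecewise-constant function'' does not fix this either, because the stopping weights $\mathbb E_{\pi_{\mathcal F}(Q)}^w f$ attached to the individual cubes $Q$ do not factor through a single function of $x$.

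This is precisely the difficulty the paper's good/bad split is designed to remove. Working inside a single $(f,w)$-corona, the off-diagonal term $(II)$ produces the quantity $\sum_{L'}(\mathbb E_{L'}^\sigma g)^{p'}\sigma(L')$, and the defining inequality of $\mathcal{Q}_1$ converts it, cube by cube, into $\sum_{L'}(\mathbb E_{L'}^w f)^{p}w(L')$ — which \emph{is} controlled by the $w$-Carleson structure of the single corona (via inequality \eqref{e.lbounded}). That conversion has no analogue in your parallel-corona set-up, so you would need a genuinely new mechanism (not just Carleson packing of $\mathcal F$) to close the sum. As written, the sufficiency half of your argument does not go through.
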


\begin{remark}
\label{r.twdpos}
In fact, one needs only weaker testing conditions in order to get boundedness of the operator, namely,  \eqref{e.test} and \eqref{e.testd} can be replaced by 
\begin{equation}
\label{e.stest}  \norm T_{in,Q}(w 1_{Q}).L^{p}(\sigma).^{p}\leq C_{0} w(Q), 
\end{equation}
and
\begin{equation}
\label{e.stestd} \norm T^{*}_{in,Q}(\sigma 1_{Q}).L^{p}(w).^{p}\leq C_{0}^{*}\sigma(Q)
\end{equation}
respectively, where $\displaystyle T_{in,Q}:=\sum_{\substack{P\in \mathcal D\\ P\subset Q  }}\tau_{P} (\mathbb E_{P}|f|)1_{P}$. 
The use of these weaker testing conditions \eqref{e.stest} and \eqref{e.stestd} can be traced in the proof of Theorem \ref{t.maind} below.
\end{remark}
The characterization in terms of testing conditions for dyadic positive operators in $\mathbb R^{n}$ was provided by Lacey, Sawyer and Uriarte-Tuero \cite{0911.3437}, based on previous work of Eric Sawyer in the continuos case \cites{MR676801,MR930072}. In a recent paper \cite{1201.1455}, Treil was able to simplify their argument. Our contribution aims to further simplify Treil's latest proof, we use one discretizing procedure (the Corona decomposition in subsection 3.1), and we avoid the appeal to 
the Carleson Embedding theorem.

Let $\Delta w$ denote the weight obtained from $w$ by averaging
$$
     \Delta w = \sum_{\substack{I \text{ interval}\\ I\subset \T}    } \E_{T_I} (w 1_{T_I}) 1_{T_I}     
$$
\begin{remark}
The boundedness of 
$$
T(w\cdot)\mid L^{p}(w)\mapsto  L^{p}(\sigma)
$$ 
depends only $\Delta w$ and $\Delta \sigma$.
\end{remark}

\subsection{A Corona Decomposition}

For now and throughout this section, we will assume without loss of generality that the function $f$ is positive. 

\begin{definition} \label{d.corona}
Let $Q_{0}$ be a cube in $\mathcal D$ and let  $\mathcal D_{0}$ be a family of cubes contained in $Q_{0}$.
 Let $w$ be a weight in $\mathbb R^{n}$ and let $f$ be a positive locally integrable function. We define 
$$
\mathcal L(Q_{0})= \{ Q\in \mathcal D_{0}\,:\,  Q \text{ is a maximal cube in $\mathcal{D}_0$ such that } \mathbb E^{w}_{Q}|f| 
> 4\mathbb E^{w}_{Q_{0}}|f|\}.
$$
We define 
$$
\mathcal L_0:= \{Q_{0}\}
$$
recursively
$$
\mathcal L_{i}:= \cup_{L\in\mathcal L_{i-1} }\mathcal L(L).
$$
We will denote  the union of all the stopping cubes by $\mathcal L:= \cup_{i\geq 0} \mathcal L_{i}$.
 We notice that we could also define the starting family $\mathcal L_0$ as a union of disjoint maximal cubes and repeat the above construction in each one of the cubes in $\mathcal L_0$.
Given $Q\in \mathcal D_0$, we define $\lambda(Q)$ as the minimal cube $L\in \mathcal L$ such that $ Q\subset L$ and $\mathcal D(L):=\{Q\in \mathcal D_{0}\,:\, \lambda(Q)=L\}$.
\end{definition}

We consider now the dyadic Hardy-Littlewood maximal function in its weighted form. For a weight $w$, we define
\begin{equation}
\label{e.max}
M_{w}f(x)= \sup_{Q\in \mathcal D} \frac{1_{Q}}{w(Q)}\int_{Q} |f|wdm,
\end{equation}
where $dm$ stands for the Lebesgue measure in $\mathbb R^{n}$.
The following result is a well-known classical theorem.
\begin{theorem}
\label{t.mbounded}
\begin{equation}
\label{e.mbounded}
\norm M_{w}f. L^{p}(wdm).\leq C \norm f.L^{p}(wdm).
\end{equation}
where the constant $C$ is independent of the weight $w$.
\end{theorem}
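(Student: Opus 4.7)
The plan is to recognize $M_{w}$ as the classical dyadic Hardy--Littlewood maximal operator for the Borel measure $d\mu := w\, dm$ on $\mathbb{R}^{n}$, and to apply the standard Calder\'on--Zygmund interpolation scheme. The crucial point is that every quantitative estimate below depends on $w$ only through the fact that $\mu$ is a measure, so the resulting $L^{p}$ bound is automatically independent of $w$.

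First I would establish the weak-type $(1,1)$ inequality
\[ \mu\bigl(\{x\in\mathbb{R}^{n}\,:\, M_{w}f(x) > \lambda\}\bigr)\leq \frac{1}{\lambda}\int |f|\, d\mu,\qquad \lambda>0,\]
via a stopping-time argument. Fix $\lambda>0$ and let $\{Q_{j}\}$ be the collection of dyadic cubes $Q\in\mathcal D$ with $0<w(Q)<\infty$ that are maximal with respect to the property $\mathbb E^{w}_{Q}|f|>\lambda$. By dyadic maximality the $Q_{j}$ are pairwise disjoint, and up to a $\mu$-null set their union equals $\{M_{w}f>\lambda\}$. Summing the defining inequality,
\[ \sum_{j}\mu(Q_{j}) \leq \frac{1}{\lambda}\sum_{j}\int_{Q_{j}}|f|\, d\mu \leq \frac{1}{\lambda}\|f\|_{L^{1}(d\mu)}, \]
which is the desired weak-$(1,1)$ bound with constant $1$.

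Next, the trivial pointwise estimate $\mathbb E^{w}_{Q}|f|\leq \|f\|_{L^{\infty}(d\mu)}$ shows that $M_{w}$ is bounded on $L^{\infty}(d\mu)$ with norm $1$. Marcinkiewicz interpolation between these two endpoints then produces $\|M_{w}f\|_{L^{p}(d\mu)}\leq C_{p}\|f\|_{L^{p}(d\mu)}$ for every $1<p<\infty$, with $C_{p}$ depending only on $p$, which is the claim.

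There is essentially no substantive obstacle here; the only minor subtlety lies in cubes on which $w$ fails to be locally integrable, which is resolved by restricting the supremum in \eqref{e.max} to cubes $Q$ with $w(Q)<\infty$ --- contributions from the remaining cubes drop out automatically after the stopping-time selection.
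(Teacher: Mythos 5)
Your argument is correct and is the standard Calder\'on--Zygmund proof of this classical fact; the paper itself states Theorem~\ref{t.mbounded} without proof, simply referring to it as a well-known result. One small point worth making explicit: the stopping-time selection requires the existence of maximal dyadic cubes with $\mathbb E^{w}_{Q}|f|>\lambda$, which is not automatic when $\mu=w\,dm$ is a finite measure, since the averages over the ascending chain of cubes containing a given point need not tend to zero. This is easily fixed---either run the argument for $f$ compactly supported and pass to the limit, or note that in the paper's actual application the dyadic family is finite (the Carleson boxes inside the unit disc, or the truncated family used in the proof of Theorem~\ref{t.maind}), so maximal cubes exist trivially. With that caveat handled, the weak-$(1,1)$ bound with constant $1$, the trivial $L^\infty$ bound, and Marcinkiewicz interpolation give exactly \eqref{e.mbounded} with a constant depending only on $p$.
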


The stopping cubes in Definition \ref{d.corona} provide the right collection of sets to linearise the dyadic Hardy Littlewood maximal function described in \eqref{e.max}, i.e., we have the following pointwise estimate:

\begin{equation}
\label{e.pointwise}
\sum_{\mathcal L} (\mathbb E_{L}^{w}|f|) 1_{L}(x) \lesssim M_{w}f(x) \quad \text{ for all } x\in \mathbb R^{n}.
\end{equation}

The proof of \eqref{e.pointwise} is an exercise. Suppose $x$ is not contained in any of the cubes of the starting collection $\mathcal L_{0}$, then the left hand side is zero, and
the inequality is trivially true. If, on the contrary, 
$x\in Q_{0}$ for some  $Q_{0}\in\mathcal L_{0}$, there exists a stopping cube $L' \in \LL$ with minimal side length such that $x\in L'$. We also know that the expectations are increasing geometrically, i.e.,
$$
\mathbb E_{L}^{w}|f|>4\mathbb E_{\tilde{L}}^{w}|f|, \text{ for all } L,\tilde{L}\in \mathcal L, \, \tilde{L}\subsetneq L,
$$
therefore 
$$
\sum_{\substack{L\in \mathcal L\\ x\in L}} \mathbb E_{L}^{w}|f|\lesssim E_{L'}^{w}|f| \leq M_{w}f(x),
$$
concluding the proof of \eqref{e.pointwise}.

An application of \eqref{e.pointwise} and Theorem \ref{t.mbounded} provides the following useful inequality:

\begin{equation}
\label{e.lbounded}
\sum_{L\in \mathcal L} (\mathbb E_{L}^{w}|f|)^{p} w(L)\lesssim \norm f.L^{p}(w dm).
\end{equation}

\subsection{Proof of Theorem \ref{t.maind}}

\begin{proof}

We are now ready to prove the main theorem in this section. We will assume there is a finite collection of dyadic cubes $\mathcal Q$ in the definition of the operator $T$, and we will prove the operator norm is independent of the chosen collection. So from now on 
$$
Tf=\sum_{Q\in \mathcal Q} \tau_{Q}(\mathbb E_{Q}f) 1_{Q}
$$

It is enough to prove boundedness of the bilinear form  $\langle T(wf), g\sigma \rangle$, where $0\leq f\in L^{p}(w)$ and $0\leq g\in L^{p'}(\sigma)$. 
Following the argument in \cite{1201.1455}, we seek an estimate of the form
\begin{equation}
\label{e.goal}
\langle T(wf), g\sigma \rangle \leq A\norm f.L^{p}(w).\norm g.L^{p'}(\sigma). + B\norm f.L^{p}(w).^{p}.
\end{equation}

We first divide the cubes in $\mathcal Q$ into two collections $\mathcal Q_{1}$ and $\mathcal Q_{2}$ according to the following criterion. A cube $Q$ will belong to $\mathcal{Q}_{1}$, if
\begin{equation}
\label{e.crit}
\left( \mathbb E_{Q}^{w}f\right)^{p}w(Q)\geq \left( \mathbb E_{Q}^{\sigma}g\right)^{p'}\sigma(Q),
\end{equation}
and it will belong to $\mathcal{Q}_{2}$ otherwise. This reorganisation of the cubes allows us to write $T=T_{1}+T_{2}$, where
$$
T_{i}f=\sum_{Q\in \mathcal Q_{i}} \tau_{Q}(\mathbb E_{Q}f )1_{Q}, \,\, i=1,2.
$$

The idea of writing $T$ as the sum of $T_1$ and $T_2$ was already present in the work of Treil \cite{1201.1455} and previously in the work of Nazarov, Treil and Volberg \cite{MR1685781}.

We will prove boundedness of $T_{1}$ using the testing condition \eqref{e.test}. The boundedness of $T_{2}$ can be proven analogously to $T_1$, only using \eqref{e.testd} this time.

\begin{eqnarray*}
\langle T_{1}(wf), g\sigma \rangle &=&\sum_{Q\in \mathcal Q_{1}} \tau_{Q}\mathbb E_{Q}(fw)\ip g\sigma, 1_{Q},\\
&=& \sum_{L\in \mathcal L}\sum_{Q\in \mathcal D(L)}  \tau_{Q}\mathbb E_{Q}(fw)\ip g\sigma, 1_{Q},\\
&=& \sum_{L\in \mathcal L} \langle T_{L}(wf), g\sigma \rangle,
\end{eqnarray*}
where $\LL$ is a collection of stopping cubes in the family $\mathcal{Q}_1$, to be specified below, and
 $T_{L}f= \sum_{Q\in \mathcal D(L)}  \tau_{Q}\mathbb E_{Q}(f)1_{Q}$. 
 To find the collection of stopping cubes $\LL$,
  we define $\mathcal L_{0}$ as the collection of  maximal cubes in the family $\mathcal Q_{1}$,  and follow the Definition \ref{d.corona} for given $f$ and $w$ to define $\mathcal L$, with $\mathcal(Q_1) $ as
  our family of dyadic cubes.

We are going to estimate the bilinear form 
\begin{equation}
\label{e.bil}
\sum_{L\in\mathcal L}\langle T_{L}(wf), g\sigma \rangle,
\end{equation}
but before doing this, let us look at the norm of $T_{L}$. We claim that
\begin{equation}
\label{e.normtl}
\norm T_{L}(wf). L^{p}(\sigma).^{p}\leq C_{0} 4^p  \left(\mathbb E_{L}^{w}(f)\right)^{p}w(L).
\end{equation}

This is easily verified by
\begin{eqnarray*}
\norm T_{L}(wf). L^{p}(\sigma).^{p}&=& \norm  \sum_{Q\in \mathcal D(L)}  \tau_{Q}\mathbb E_{Q}(fw)1_{Q}. L^{p}(\sigma).^{p}\\
&=& \norm  \sum_{Q\in \mathcal D(L)} \frac{w(Q)}{|Q|} \tau_{Q}\mathbb E_{Q}^{w}(f)1_{Q}. L^{p}(\sigma).^{p}\\
&\leq & 4^p\left(\mathbb E_{L}^{w}(f)\right)^{p} \norm  \sum_{Q\in \mathcal D(L)} \frac{w(Q)}{|Q|} \tau_{Q}1_{Q}. L^{p}(\sigma).^{p}\\
&\leq & 4^p\left(\mathbb E_{L}^{w}(f)\right)^{p} \norm  T(w1_{Q}). L^{p}(\sigma).^{p}\\
&\leq & 4^pC_{0}  \left(\mathbb E_{L}^{w}(f)\right)^{p}w(L),
\end{eqnarray*}
where in the first inequality we have used that $Q\in \mathcal D(L)$ are not stopping cubes, and in the last inequality, the testing condition \eqref{e.test}.

We now estimate \eqref{e.bil}. 
\begin{equation}
\label{e.nose}
\sum_{L\in\mathcal L} \langle T_{L}(wf), g\sigma \rangle= \sum_{L\in\mathcal L}\int T_{L}(wf)(x) g(x)\sigma(x)dx= (I)+(II), 
\end{equation}

where
$$
(I)=\sum_{i}\sum_{L\in\mathcal L_{i}} \int_{\displaystyle L\setminus \cup_{\substack{L'\in \mathcal L_{i+1}\\L'\subset L}}L'}T_{L}(wf)(x) g(x)\sigma(x)dx,
$$
and 
$$
(II)=\sum_{i}\sum_{L\in\mathcal L_{i}} \int_{\displaystyle \cup_{\substack{L'\in \mathcal L_{i+1}\\L'\subset L}}L'}T_{L}(wf)(x) g(x)\sigma(x)dx.
$$

We proceed to estimate $(I)$,

\begin{align*}
(I)&\leq  \sum_{i}\sum_{L\in\mathcal L_{i}} \norm T_{L}(fw). L^{p}(\sigma). \norm g1_{L\setminus \cup_{\substack{L'\in \mathcal L_{i+1}\\L'\subset L}}L'}.L^{p'}(\sigma).\\
 & \leq \left( \sum_{i}\sum_{L\in\mathcal L_{i}} \norm T_{L}(fw). L^{p}(\sigma).^{p}  \right)^{1/p} \left( \sum_{i}\sum_{L\in\mathcal L_{i}} \norm g1_{L\setminus \cup_{\substack{L'\in \mathcal L_{i+1}\\L'\subset L}}L'}.L^{p'}(\sigma).^{p'}\right)^{1/p'}\\
 &\leq  4 C_{0}^{1/p}\left ( \sum_{L\in \mathcal L} \left(\mathbb E_{L}^{w}f\right)^{p} w(L)\right)^{1/p}\norm g.L^{p'}(\sigma).\\
 &\lesssim C_{0}\norm f.L^{p}(w). \norm g.L^{p'}(\sigma).,
\end{align*}
where in the first two inequalities, we have used H{\"o}lder's inequality, and in the third one we have used the testing condition \eqref{e.test} and the fact that $\displaystyle \cup_{i}\cup_{L\in \mathcal L_{i}}L\setminus \cup_{\substack{L'\in \mathcal L_{i+1}\\L'\subset L}} L'$ forms a partition of the maximal cubes in $\mathcal L_{0}$. For the last inequality, we have used \eqref{e.lbounded}.

We now turn to $(II)$. Before we proceed with the estimate, let us note the following remark.
\begin{remark} \label{r.constant} Let $L\in \mathcal L$ be fixed,  then the operator $T_{L}(fw)$ is constant on $L'$, where $L'\in \mathcal L, \, L'\subsetneq L$. We will denote this constant by $T_{L}(fw)(L')$.
\end{remark}
 
Taking this remark into account, we get the following estimates for fixed $L\in \mathcal L_{i}$:

\begin{eqnarray*}
\int_{ \cup_{\substack{L'\in \mathcal L_{i+1}\\L'\subset L}}L'}T_{L}(wf)(x) g(x)\sigma(x)dx & = & \sum_{\substack{L'\in \mathcal L_{i+1}\\L'\subset L}} T_{L}(fw)(L')\int_{L'}g\sigma dx\\
&=& \sum_{\substack{L'\in \mathcal L_{i+1}\\L'\subset L}} \int_{L'} T_{L}(fw)(x) \left( \mathbb E_{L'}^{\sigma}g\right)\sigma(x) dx\\
&=& \int_{L} T_{L}(fw)(x) \left(\sum_{\substack{L'\in \mathcal L_{i+1}\\L'\subset L}} \mathbb E_{L'}^{\sigma}g 1_{L'}(x)\right) \sigma(x) dx\\
&\leq & \left\|   T_{L}(fw)\right\|_{L^{p}(\sigma)  }    \left\|      \sum_{\substack{L'\in \mathcal L_{i+1}\\ L'\subset L}} \mathbb E_{L'}^{\sigma}g 1_{L'} \right\|_{L^{p'}(\sigma)}\\
&=& \norm T_{L}(fw).L^{p}(\sigma). \left( \sum_{\substack{L'\in \mathcal L_{i+1}\\L'\subset L}} (\mathbb E_{L'}^{\sigma}g)^{p'}\sigma(L')  \right)^{1/p'}\\
&\leq & 4 C_0\mathbb E_{L}^{w}|f| w(L)^{1/p}\left( \sum_{\substack{L'\in \mathcal L_{i+1}\\L'\subset L}} (\mathbb E_{L'}^{w}f)^{p}w(L') \right)^{1/p'},
\end{eqnarray*}

where we have used Remark \ref{r.constant}, H{\"o}lder's inequality, \eqref{e.normtl} and the hypothesis \eqref{e.crit}. We now proceed to sum the previous estimates in $L$  to obtain the desired bound for $(II)$.

\begin{eqnarray*}
(II)&\lesssim & \sum_{i}\sum_{L\in\mathcal L_{i}} \mathbb E_{L}^{w}|f| w(L)^{1/p}\left( \sum_{\substack{L'\in \mathcal L_{i+1}\\L'\subset L}} (\mathbb E_{L'}^{w}f)^{p}w(L') \right)^{1/p'}\\
&\lesssim & \left( \sum_{L\in \mathcal L} (\mathbb E_{L}^{w}|f|)^{p}w(L)\right)^{1/p} \left( \sum_{i}\sum_{L\in\mathcal L_{i}}\sum_{\substack{L'\in \mathcal L_{i+1}\\L'\subset L}} (\mathbb E_{L'}^{w}f)^{p}w(L') \right)^{1/p'}\\
&\lesssim &\norm f.L^{p}(w). \norm f.L^{p}(w).^{p/p'} \lesssim \norm f.L^{p}(w).^{p}.
\end{eqnarray*}

Adding $(I)$ and $(II)$, we get the desired estimate \eqref{e.goal}.

\end{proof}

We now turn to the two weight characterization for the case of the maximal Bergman projection $P^{+}_{B}$ and its associated dyadic model $P^{\beta}$. We start with $P^{\beta}$. One can state the following theorem.

\begin{theorem}
\label{c.twodB}
Let $\mathcal D^{\beta}$ be a fixed dyadic grid in $\T$ and let $P^{\beta}$ as defined in \eqref{e.drm}. Then 
$$
P^{\beta}(w\cdot)\mid L^{p}(w)\rightarrow  L^{p}(\sigma)
$$
is bounded, if and only if
 
 \begin{equation} 
 \label{e.testpbeta}  \norm \sum_{\substack{I\in \mathcal D^{\beta}\\ I\subset I_{0}}}\langle w 1_{Q_{I_{0}}}, \frac{1_{Q_{I}}}{|I|^{2}}\rangle 1_{Q_{I}} .L^{p}(\sigma).^{p}\leq C_{0} w(Q_{I_{0}}), 
 \end{equation}
and 
\begin{equation}
\label{e.testpbetad}  \norm \sum_{\substack{I\in \mathcal D^{\beta}\\ I\subset I_{0}}}\langle \sigma 1_{Q_{I_{0}}}, \frac{1_{Q_{I}}}{|I|^{2}}\rangle 1_{Q_{I}} .L^{p'}(w).^{p'}\leq C_{0}^* \sigma(Q_{I_{0}}), 
\end{equation}
for all $I$ dyadic interval in $\mathcal D^{\beta}$, where $Q_{I}$ represents the Carleson box associated to $I$ and the constants $C_{0}$ and $C_{0}^{*}$ are independent of the intervals $I$. 
Moreover, there exists a constant $c>0$ independent of the weights, such that
$$
      \left\|  P^{\beta}(w\cdot)  \right\|_{L^p(w)^p \to L^p(\sigma)} \le c(C_0 + C_0^*).
$$
\end{theorem}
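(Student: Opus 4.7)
The plan is to recognise $P^\beta$ as a dyadic positive operator in the sense of \eqref{e.dyadop} and then invoke Theorem \ref{t.maind} (in the disc-adapted form indicated by the authors after its statement). The family $\{Q_I : I\in\mathcal D^\beta\}$ of Carleson boxes is a tree isomorphic to $\mathcal D^\beta$: two intervals $I,J\in\mathcal D^\beta$ are either nested or disjoint, and the same holds for $Q_I,Q_J$, with $I\subset J \Leftrightarrow Q_I\subset Q_J$ (by the definition \eqref{e.carbox}). Since the normalised area satisfies $|Q_I|\sim |I|^2$, the definition \eqref{e.drm} can be rewritten as
$$
P^\beta f(z)=\sum_{I\in\mathcal D^\beta}\tau_I\,(\mathbb E_{Q_I} f)\,1_{Q_I}(z),\qquad \tau_I=\frac{|Q_I|}{|I|^2}\sim 1,
$$
which fits the template \eqref{e.dyadop} with Carleson boxes in the role of dyadic cubes.

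Next I would match the testing conditions \eqref{e.testpbeta}, \eqref{e.testpbetad} with the localised testing conditions \eqref{e.stest}, \eqref{e.stestd} from Remark \ref{r.twdpos}. Using the tree structure, for $I_0\in\mathcal D^\beta$ one has the splitting
$$
P^\beta(w\,1_{Q_{I_0}})=\sum_{I\subset I_0}\frac{w(Q_I)}{|I|^2}\,1_{Q_I}\;+\; w(Q_{I_0})\sum_{I\supsetneq I_0}\frac{1_{Q_I}}{|I|^2},
$$
and the first sum is exactly $P^\beta_{in,Q_{I_0}}(w\,1_{Q_{I_0}})$, whose $L^p(\sigma)$-norm to the $p$-th power equals the left-hand side of \eqref{e.testpbeta}. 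Because $P^\beta$ is self-adjoint on $L^2(\D,dA)$, the identical computation identifies the left-hand side of \eqref{e.testpbetad} with $\|P^\beta_{in,Q_{I_0}}(\sigma\,1_{Q_{I_0}})\|_{L^{p'}(w)}^{p'}$. For the necessity direction, testing $P^\beta(w\cdot)$ against $f=1_{Q_{I_0}}$ and discarding the nonnegative second sum in the splitting gives \eqref{e.testpbeta} with constant at most $\|P^\beta(w\cdot)\|_{L^p(w)\to L^p(\sigma)}^p$; \eqref{e.testpbetad} follows analogously by duality. Sufficiency, together with the quantitative bound $c(C_0+C_0^*)$, is then the content of Theorem \ref{t.maind} combined with Remark \ref{r.twdpos}, once transplanted to the disc.

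The only non-trivial point is therefore this transplantation. The Corona decomposition of Section 3.1 and the subsequent bilinear-form estimate use only two structural inputs: (i)~the indexing family is a tree under inclusion, and (ii)~the associated weighted Hardy--Littlewood maximal operator $M_w^{\mathcal D^\beta}f:=\sup_{I\in\mathcal D^\beta}\frac{1_{Q_I}}{w(Q_I)}\int_{Q_I}|f|\,w\,dA$ is bounded on $L^p(w\,dA)$. The Carleson-box tree is a dyadic filtration of $\D$ with respect to $dA$, so (ii) is just Doob's maximal inequality, and the pointwise control \eqref{e.pointwise} together with its consequence \eqref{e.lbounded} continues to hold verbatim. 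I expect this part to be routine; the conceptual step is the identification of $P^\beta$ as a dyadic positive operator on the Carleson-box tree, after which the proof of Theorem \ref{t.maind} goes through without modification and yields both the equivalence and the sharp operator-norm bound.
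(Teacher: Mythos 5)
Your proposal is correct and follows the same route the paper takes: recognise $P^\beta$ as a dyadic positive operator of the form \eqref{e.dyadop} over the tree of Carleson boxes (with $\tau_I = |Q_I|/|I|^2 \sim 1$), identify the testing conditions \eqref{e.testpbeta}, \eqref{e.testpbetad} with the localised conditions of Remark \ref{r.twdpos}, and run the Corona-decomposition argument of Theorem \ref{t.maind} verbatim, using the universal boundedness of the weighted maximal function over Carleson boxes in place of Theorem \ref{t.mbounded}. One small terminological quibble: the Carleson boxes at a fixed generation do \emph{not} partition $\D$ (the children of $Q_I$ cover only an annular strip inside it), so this is not literally a dyadic filtration and the maximal bound is not quite Doob's inequality; it is instead the standard Vitali-type covering argument for a tree of sets, which gives the same weight-independent $L^p(w\,dA)$ bound, so the conclusion and the rest of your argument are unaffected.
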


The proof of Theorem \ref{c.twodB} in the disc $\D$ is identical to the one we describe in Theorem \ref{t.maind}. In the case of the disc, our dyadic system will be described by the Carleson cubes associated to the intervals in the dyadic grid $\mathcal D^{\beta}$ in $\T$. The boundedness of the weighted Hardy-Littlewood maximal function over dyadic Carleson cubes in the disc will be used instead of Theorem \ref{t.mbounded}. We will also consider the testing conditions \eqref{e.stest} and \eqref{e.stestd}. The details of the proof are left to the reader.

We obtain the following corollary, which presents a two weight characterization for the maximal Bergman projection.

\begin{corollary}
\label{c.twmBp} 
Let $P^{+}_{B}$ be the maximal Bergman projection in the disc $\D$, let $1<p<\infty$ and $p'$ its dual exponent and let $w, \sigma$ be two weight functions. Then $$M_{w^{1/p}}P^{+}_{B}M_{\sigma^{1/p'}}:\, L^{p}(\D) 
\rightarrow L^{p}(\D)$$
is bounded, if and only if
\begin{equation}
\label{e.testmBp}
\norm M_{w^{1/p}}P^{+}_{B}M_{\sigma^{1/p'}}(1_{Q_{I}}\sigma^{1/p}).L^{p}(\D).\leq C_{0} \norm 1_{Q_{I}}\sigma^{1/p}. L^{p}(\D).
\end{equation}
and
\begin{equation}
\label{e.testmBpd}
\norm M_{\sigma^{1/p'}}P^{+}_{B}M_{w^{1/p}}(1_{Q_{I}}w^{1/p'}).L^{p'}(\D).\leq C_{0}^{*} \norm 1_{Q_{I}}w^{1/p'}. L^{p'}(\D).,
\end{equation}
for any interval $I$ in $\T$, where the constants $C_{0}$ and $C_{0}^{*}$ are independent of the choice of interval. 

Moreover, there exists a constant $c>0$ independent of the weights, such that
$$
      \left\|M_{w^{1/p}}P^{+}_{B}M_{\sigma^{1/p'}}  \right\|_{L^2 \to L^2} \le c(C_0 + C_0^*).
$$
\end{corollary}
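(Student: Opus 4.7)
The plan is to reduce Corollary \ref{c.twmBp} to the dyadic two-weight characterisation Theorem \ref{c.twodB} by means of the pointwise sandwich of Proposition \ref{p.pointwise}, combined with the dictionary (\ref{e.oldtwo})--(\ref{e.aletwo}) between the three equivalent formulations of a two-weight inequality. First I would apply the equivalence of (\ref{e.standtwo}) and (\ref{e.aletwo}) to rephrase the goal: the operator $M_{w^{1/p}}P_B^+M_{\sigma^{1/p'}}$ is bounded on $L^p(\D)$ if and only if
$$
P_B^+(\sigma\,\cdot\,):L^p(\sigma)\longrightarrow L^p(w)
$$
is bounded. Using $\sigma^{1/p}\sigma^{1/p'}=\sigma$ and $w^{1/p}w^{1/p'}=w$, the two testing conditions (\ref{e.testmBp})--(\ref{e.testmBpd}) collapse to the standard testing conditions
$$
\|P_B^+(\sigma 1_{Q_I})\|_{L^p(w)}\le C_0\,\sigma(Q_I)^{1/p},\qquad \|P_B^+(w 1_{Q_I})\|_{L^{p'}(\sigma)}\le C_0^{*}\,w(Q_I)^{1/p'},
$$
for every interval $I\subset\T$. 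This reformulation is the conceptual content of the corollary; what remains is to prove the equivalence for the unweighted $L^p(\sigma)\to L^p(w)$ formulation.

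The necessity direction is immediate: assuming $P_B^+(\sigma\,\cdot\,)$ is bounded, test at $f=1_{Q_I}$ (for the first condition) and test the adjoint $P_B^+(w\,\cdot\,):L^{p'}(w)\to L^{p'}(\sigma)$ at $g=1_{Q_I}$ (for the dual condition); here one uses that $P_B^+$ has a positive, symmetric kernel, so it coincides with its formal adjoint.

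For sufficiency, fix $\beta\in\{0,1/3\}$. The lower bound $\tilde C\,P^\beta f\le P_B^+ f$ from (\ref{e.dyadicmp}) together with the standing testing hypothesis yields immediately
$$
\|P^\beta(\sigma 1_{Q_{I_0}})\|_{L^p(w)}\le \tilde C^{-1}C_0\,\sigma(Q_{I_0})^{1/p}
$$
for every $I_0\in\mathcal D^\beta$, which is stronger than the restricted "in"-testing condition of Remark \ref{r.twdpos} used in the disc version of Theorem \ref{c.twodB}; the dual testing condition for $P^\beta$ follows identically. Hence the disc version of Theorem \ref{c.twodB} (whose proof, as the excerpt notes, is the literal translation of the proof of Theorem \ref{t.maind} with Carleson boxes in place of cubes and the weighted maximal function over those boxes) gives boundedness of $P^\beta(\sigma\,\cdot\,):L^p(\sigma)\to L^p(w)$ with operator norm $\lesssim C_0+C_0^{*}$. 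Summing the upper bound $P_B^+ f\le C\sum_{\beta}P^\beta f$ from (\ref{e.dyadicmp}) over $\beta\in\{0,1/3\}$ then returns boundedness of $P_B^+(\sigma\,\cdot\,)$ with the same quantitative norm bound, and one step back through (\ref{e.aletwo}) gives the desired estimate for $M_{w^{1/p}}P_B^+M_{\sigma^{1/p'}}$.

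The only mild subtlety, which is barely an obstacle, is that the hypothesis tests at Carleson boxes of \emph{all} intervals in $\T$, while Theorem \ref{c.twodB} asks for testing at the dyadic Carleson boxes of a single grid $\mathcal D^\beta$; this goes in the easy direction, since dyadic intervals are a subfamily of all intervals. Conversely, when harvesting the testing conditions back from boundedness of $P_B^+(\sigma\,\cdot\,)$ one obtains them for every interval, matching the statement. All constants track through each step linearly, producing the quantitative bound $\|M_{w^{1/p}}P_B^+M_{\sigma^{1/p'}}\|_{L^2\to L^2}\le c(C_0+C_0^{*})$ asserted at the end of the corollary.
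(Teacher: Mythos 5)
Your proposal is correct and follows essentially the same route as the paper: the paper's proof likewise passes the testing conditions from $P_B^+$ to each $P^\beta$ via the lower bound in \eqref{e.dyadicmp}, invokes the disc version of Theorem \ref{c.twodB}, and returns to $P_B^+$ via the upper bound in \eqref{e.dyadicmp}. The only difference is presentational — you spell out the translation \eqref{e.standtwo}$\leftrightarrow$\eqref{e.aletwo} and the trivial necessity step explicitly, whereas the paper leaves those as understood.
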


As in the introduction, the operators $M_{h}$ stand for the operator of multiplication by the symbol $h$. 
\proof
We only have to prove one direction. By  the first inequality in  \eqref{e.dyadicmp}, the testing condition (\ref{e.testmBp}) and (\ref{e.testmBpd}) imply the corresponding testing condition for each $P^\beta$, and
therefore the uniform boundedness of all $P_\beta$ by Theorem \ref{c.twodB}. 
The second inequality in  \eqref{e.dyadicmp} now implies the boundedness of $M_{w^{1/p}}P^{+}_{B}M_{\sigma^{1/p'}}$ with the required norm bounds.
\qed

We note that the positivity of $P^{+}_{B}$ and the left hand-side of \eqref{e.dyadicmp} are crucial here to recover the non-dyadic case from the dyadic one.
 This advantage is not present in the case of cancellative operators such as the Bergman projection itself.


\section{$P$ and $P_+$ are equivalent}

\label{sec:pplus}

Given  $f,g\in L^2(\D)$, we denote as before
$$b_{f,g}=\sup_{z\in \D}B^{1/2}(|f|^2)(z)B^{1/2}(|g|^2)(z).$$
\begin{theorem}\label{pplus} Let $f,g\in A^{2}(\D)$. Then $T_fT_g^*$ is bounded on $A^{2}(\D)$ if and only if the 
operator $P^+_{f,g}$ defined by $$P^+_{f,g}u(z)=|f(z)|\int_\D\frac{|g(\z)|u(\z)}{|1-\overline{\z}z|^2}dA(\z)$$
is bounded on $L^2(\D)$.
\end{theorem}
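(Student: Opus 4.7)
The sufficiency (if $P^+_{f,g}$ is bounded on $L^2$, then $T_fT_g^*$ is bounded on $A^2$) is immediate from the pointwise inequality $|P_B\psi(z)| \le P^+_B(|\psi|)(z)$. Writing $T_fT_g^* u = f \cdot P_B(\bar g u)$ for $u \in A^2$, one estimates
\[
\|T_fT_g^* u\|_{A^2}^2 = \int |f|^2 |P_B(\bar g u)|^2 \, dA \le \int |f|^2 \bigl(P^+_B(|g||u|)\bigr)^2 \, dA = \|P^+_{f,g}(|u|)\|_{L^2}^2 \le \|P^+_{f,g}\|^2 \|u\|_{A^2}^2.
\]

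For the necessity, assume $T_fT_g^*$ bounded on $A^2$. By Cruz-Uribe's commutative diagram \eqref{eq:bdiagram}, this is equivalent to the cancellative two-weight operator $P_{f,g} := M_{|f|}P_BM_{|g|}$ being bounded on $L^2(\D)$. The goal is to upgrade this to boundedness of the positive operator $P^+_{f,g} = M_{|f|}P^+_BM_{|g|}$ on $L^2(\D)$. By Corollary \ref{c.twmBp} (with $w=|f|^2$, $\sigma=|g|^2$, $p=2$), this is equivalent to the two Carleson-box testing conditions
\[
\int |f|^2 \bigl|P^+_B(|g|^2 1_{Q_I})\bigr|^2 \, dA \le C \int_{Q_I}|g|^2 \, dA, \quad \int |g|^2 \bigl|P^+_B(|f|^2 1_{Q_I})\bigr|^2 \, dA \le C \int_{Q_I}|f|^2 \, dA,
\]
uniformly over arcs $I \subset \T$. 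Since $(T_fT_g^*)^* = T_gT_f^*$ has the same norm, the two conditions are dual under $f \leftrightarrow g$, and it suffices to prove the first.

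The key step is to derive this testing inequality from the cancellative boundedness of $P_{f,g}$ by exploiting analyticity of $f, g$. Let $z_I = (1-|I|) e^{i\theta_I}$ be the top point of $Q_I$ and $k_{z_I}(z) = 1/(1-\bar z_I z)^2$ the Bergman reproducing kernel. The identity $P_B(\bar g k_{z_I}) = \overline{g(z_I)} k_{z_I}$, valid for analytic $g$, yields $T_fT_g^* k_{z_I} = \overline{g(z_I)}\, f k_{z_I}$; testing $T_fT_g^*$ on $k_{z_I}/\|k_{z_I}\|$ produces the pointwise necessary estimate
\[
|g(z_I)|^2 \, B(|f|^2)(z_I) \le \|T_fT_g^*\|^2, \qquad z_I \in \D,
\]
which, combined with the subharmonic comparison $\int_{Q_I}|g|^2 \, dA \approx |g(z_I)|^2 |I|^2$, is the engine of the argument. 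For $z$ outside a bounded hyperbolic enlargement $Q_I^*$ of $Q_I$, the kernel estimate $1/|1-\bar\z z|^2 \approx |k_{z_I}(z)|$ (uniform in $\z \in Q_I$) yields the exterior bound $P^+_B(|g|^2 1_{Q_I})(z) \lesssim \|g1_{Q_I}\|_{L^2}^2 |k_{z_I}(z)|$, whose $L^2(|f|^2)$-norm is controlled by the Berezin estimate above. The interior contribution on $Q_I^*$ is handled by a direct size estimate using the subharmonic regularity of $|f|^2$ and $|g|^2$ on Carleson boxes. The main obstacle is the matching of these interior and exterior estimates in the presence of zeros of $f$ or $g$ inside $Q_I^*$, which is handled by a density/approximation argument or a Blaschke factorisation reducing to the zero-free case.
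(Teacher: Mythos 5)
The sufficiency direction is correct and essentially trivial (it is the same domination $|P_B\psi|\le P^+_B|\psi|$ the paper implicitly uses). The necessity direction, however, has a genuine gap and does not work as proposed.

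The ``engine'' you identify --- testing $T_fT_g^*$ on normalized reproducing kernels to obtain $|g(z)|^2 B(|f|^2)(z)\le\|T_fT_g^*\|^2$ --- is a true necessary condition, but it is too weak to produce the testing inequality \eqref{e.testmBp}. Two issues, in increasing order of seriousness:

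First, the pointwise comparison $\int_{Q_I}|g|^2\,dA\approx |g(z_I)|^2|I|^2$ is false. Subharmonicity of $|g|^2$ gives only $|g(z_I)|^2|I|^2\lesssim\int_{Q_I}|g|^2\,dA$; the reverse inequality fails whenever $g$ has a zero near the top of $Q_I$, and a Blaschke factorisation $g=Bg_0$ does not repair it, because $|B|\le1$ pushes the inequality in the wrong direction. Without the reverse inequality, your exterior bound $\|g1_{Q_I}\|_2^4\int_{\D\setminus Q_I^*}|f|^2|k_{z_I}|^2\,dA\lesssim \|g1_{Q_I}\|_2^2$ cannot be closed using only the Berezin estimate at $z_I$.

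Second, and more fundamentally, even if that comparison held, the Berezin estimate would only deliver a box $B_2$--type bound $\bigl(\frac{1}{|I|^2}\int_{Q_I}|g|^2\bigr)\bigl(\frac{1}{|I|^2}\int_{Q_I}|f|^2\bigr)\lesssim1$, which is essentially the Sarason condition localized to boxes. But the whole point of Theorem \ref{t.conjfails} and Section \ref{sec:counter} is that the Sarason condition is \emph{strictly weaker} than the testing conditions (4) of Theorem \ref{t.main}: the counterexample has $b_{f,g}<\infty$ yet fails the testing inequalities. So no amount of manipulation of the Berezin necessary condition alone can establish \eqref{e.testmBp}; one must extract more from the boundedness of $T_fT_g^*$ than the kernel test. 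Your ``direct size estimate'' for the interior contribution on $Q_I^*$ is where all the real work would have to happen, and it is left as a hand-wave with no mechanism for feeding in the cancellative boundedness of $P_B$ in the two-weight setting.

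The paper's proof is structurally different and precisely engineered to overcome these obstructions. Lemma \ref{kernelest} expresses the positive kernel $1/|1-\overline\z z|^2$ as the cancellative kernel (which contributes a term controlled by $\|T_fT_g^*\|$) plus positive remainder kernels, Lemma \ref{p1est} controls those remainders by $b_{f,g}$ and by $\|P^+_{f,g}\cdot\|$ itself, and the proof of Theorem \ref{pplus} packages these identities into a Stokes'/Green's formula argument applied to $\E(z)=(1-|z|^2)^2\partial\overline\partial(P^+_{f,g}|u|)^2-\mathrm{Re}\bigl(\frac{1-|z|^2}{z}\overline\partial(P^+_{f,g}|u|)^2\bigr)$, finishing by completing squares and absorbing the $\|P^+_{f,g}|g|1_E\|_2$ terms into the left-hand side. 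This is exactly the mechanism that lets the cancellative information about $T_fT_g^*$ act on the positive operator. Your proposal does not contain any analogue of this absorption step, and without it the argument cannot close.
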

For the proof of the theorem, we need  some preliminary estimates and begin with a completely elementary lemma which will play the key role in our argument.
\begin{lemma}\label{kernelest} For $z,\z\in\D$ we  have
\begin{align*}\frac1{|1-\overline{\z}z|^2}&= -\frac{\overline{\z}z}{(1-\overline{\z}z)^2} +
\frac{1-|z\z|^2}{(1-\overline{\z}z)|1-\overline{\z}z|^2} 
\\&=
-\text{\rm Re}\frac{\overline{\z}z}{(1-\overline{\z}z)^2}+\frac{1-|z\z|^2}{2|1-\overline{\z}z|^2}
+\frac{(1-|z\z |^2)^2}{2|1-\overline{\z}z|^4}\,.\end{align*}
\end{lemma}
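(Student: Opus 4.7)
The lemma is a purely algebraic identity in the two variables $w=\bar\z z$ and $\bar w = \z\bar z$; no function-theoretic input is needed. My plan is to verify the two displayed equalities by straightforward manipulation.

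For the first identity, I would clear denominators. Writing $|1-\bar\z z|^2 = (1-\bar\z z)(1-\z\bar z)$, the asserted equality becomes
\[
(1-\bar\z z) + \bar\z z\,(1-\z\bar z) \;=\; 1 - |z\z|^2
\]
after multiplying through by $(1-\bar\z z)^{2}(1-\z\bar z)$. The left side simplifies to $1 - \bar\z z\cdot \z\bar z = 1 - |z\z|^2$, which is exactly the right side. So the first decomposition is immediate.

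For the second identity, I would take real parts of the first. Since $1/|1-\bar\z z|^2$ is real, it equals $\mathrm{Re}$ of the right side of the first identity. The first term produces $-\mathrm{Re}\,\bar\z z/(1-\bar\z z)^2$ directly. For the second term I use that $1-|z\z|^2$ and $|1-\bar\z z|^2$ are real, reducing the computation to
\[
\mathrm{Re}\,\frac{1}{1-\bar\z z} \;=\; \frac{1 - \mathrm{Re}(\bar\z z)}{|1-\bar\z z|^2}.
\]
The key algebraic trick is then the identity $2(1-\mathrm{Re}(\bar\z z)) = |1-\bar\z z|^2 + (1 - |z\z|^2)$, which follows by expanding $|1-\bar\z z|^2 = 1 - 2\,\mathrm{Re}(\bar\z z) + |z\z|^2$. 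Substituting this splits the real part of the second term into the two pieces displayed in the lemma, namely $\tfrac{1-|z\z|^2}{2|1-\bar\z z|^2}$ and $\tfrac{(1-|z\z|^2)^2}{2|1-\bar\z z|^4}$.

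There is no real obstacle here, only bookkeeping; the whole content of the lemma is the splitting of $(1-\bar\z z)$ as $-\bar\z z + (1-\bar\z z\cdot\z\bar z)/(1-\z\bar z)$ (equivalently, a reproducing-kernel style decomposition of the Bergman-type kernel into a holomorphic square term plus a positive remainder involving $1-|z\z|^2$). The usefulness will of course come later, when the holomorphic piece $-\bar\z z/(1-\bar\z z)^2$ pairs favorably with $A^2$-functions while the positive remainder can be controlled by the maximal Bergman kernel.
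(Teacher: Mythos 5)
Your proof is correct and takes essentially the same route as the paper's: both are purely algebraic verifications in $w=\overline{\z}z$, and both reduce the second display to the sub-identity $\operatorname{Re}\tfrac{1}{1-w}=\tfrac12+\tfrac{1-|w|^2}{2|1-w|^2}$. Your derivation of the first line by clearing denominators is a minor streamlining of the paper's chain starting from $\tfrac{1}{(1-w)^2}+\tfrac{1}{|1-w|^2}=\tfrac{2}{1-w}\operatorname{Re}\tfrac{1}{1-w}$, but the content is the same.
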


\begin{proof} Let $w=\overline{\z}z\in \D$, and note that 
\begin{align*}
&\frac1{(1-w)^2} + \frac1{|1-w|^2}=\frac2{(1-w)}\text{Re}\frac1{(1-w)}\\&
 = \frac1{(1-w)}+ \frac1{(1-w)}\text{Re}
\frac{1+w}{1-w}\,,\end{align*}
and the first identity follows from $\text{Re}\frac{1+w}{1-w}=\frac{1-|w|^2}{|1-w|^2}$. For the second, we just take the real part on both sides of the first and 
use $\text{Re}\frac{1}{1-w}=\frac1{2}+\frac{1-|w|^2}{2|1-w|^2}$.
\end{proof}
The next two lemmas deal with estimates for integral operators whose kernels are involved in the identities above.
\begin{lemma} \label{p1est} For  $f,g \in A^{2}(\D)$, $u\in L^2(\D)$ and $z\in \D$ let 
\begin{align*} &C^{1}_{f, g}u(z)=|f(z)|\int_\D|g|u(\z)\frac{1-|z|^2}{|1-\overline{\z}z|^2}dA(\z)\,,\\&
C^{2}_{f, g}u(z)=|f(z)|\int_\D|g|u(\z)\frac{(1-|z|^2)(1-|\z|^2)}{|1-\overline{\z}z|^4}dA(\z)\,,\\&
C^{3}_{f, g}u(z)=|f(z)|\int_\D|g|u(\z)\frac{1-|\z|^2}{(1-\overline{\z}z)|1-\overline{\z}z|^2}dA(\z)\,,\\&
C^{4}_{f, g}u(z)=|f(z)|\int_\D|g|u(\z)\frac{(1-|\z|^2)^2}{|1-\overline{\z}z|^4}dA(\z)\,.\end{align*}
Then for $j=1,2$
$$ \|C_ju\|_2\lesssim b_{f,g}\|u\|_2\,.$$
Moreover,  for any measurable set $E\subset\D$  
$$ \|C^{3}_{f, g}g1_E\|_2\lesssim \|P^+_{f,g}g1_E\|_2^{1/2}b^{1/2}_{f,g}\|g1_E\|_2^{1/2}\,,$$
and 
$$ \|C^{4}_{f, g}g1_E\|_2\lesssim b_{f,g}\|g1_E\|_2\,.$$
\end{lemma}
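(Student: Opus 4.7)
All four estimates come from applying Cauchy--Schwarz inside the inner $\zeta$-integral with the kernel split tailored to each case, together with the following ingredients:
(a) the pointwise inequality $|h(z)|^{2}\le B(|h|^{2})(z)$ for $h\in A^{2}(\D)$, which follows from the submean value inequality applied to $|h\circ\varphi_{z}|^{2}$ at the origin;
(b) the Sarason hypothesis $B(|f|^{2})(z)B(|g|^{2})(z)\le b_{f,g}^{2}$;
(c) the elementary estimate $1-|\zeta|^{2}\le 2|1-\overline\zeta z|$; and
(d) the Forelli--Rudin identity $\int_{\D}|1-\overline\zeta z|^{-4}\,dA(z)\asymp(1-|\zeta|^{2})^{-2}$.

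For $C^{1}$ I apply Cauchy--Schwarz in $\zeta$ separating $u(\zeta)$ from $|g(\zeta)|/|1-\overline\zeta z|^{2}$, which gives $C^{1}u(z)\le |f(z)|\,B^{1/2}(|g|^{2})(z)\,\|u\|_{2}$. Then
\[\|C^{1}u\|_{2}^{2}\le\|u\|_{2}^{2}\int_{\D}|f|^{2}B(|g|^{2})\,dA\le\|u\|_{2}^{2}\int_{\D}B(|f|^{2})B(|g|^{2})\,dA\le b_{f,g}^{2}\|u\|_{2}^{2}\]
by (a), (b), and $|\D|=1$. For $C^{2}$ a balanced Cauchy--Schwarz with the split $\tfrac{|g|}{|1-\overline\zeta z|^{2}}\cdot\tfrac{(1-|\zeta|^{2})u}{|1-\overline\zeta z|^{2}}$ yields
\[|C^{2}u(z)|^{2}\le|f(z)|^{2}B(|g|^{2})(z)\int_{\D}\frac{(1-|\zeta|^{2})^{2}|u(\zeta)|^{2}}{|1-\overline\zeta z|^{4}}\,dA(\zeta).\]
Applying $|f|^{2}B(|g|^{2})\le b_{f,g}^{2}$ pointwise, then Fubini and (d), the factor $(1-|\zeta|^{2})^{2}$ cancels the Forelli--Rudin singularity and $\|C^{2}u\|_{2}\lesssim b_{f,g}\|u\|_{2}$ follows.

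For $C^{4}$ on $g1_{E}$ I argue by duality: for $v$ with $\|v\|_{2}\le 1$, Fubini gives
\[|\langle C^{4}g1_{E},v\rangle|\le\int_{E}|g|^{2}(1-|\zeta|^{2})^{2}\int_{\D}\frac{|f||v|}{|1-\overline\zeta z|^{4}}\,dA(z)\,dA(\zeta).\]
A symmetric Cauchy--Schwarz on the inner integral produces $B^{1/2}(|f|^{2})(\zeta)B^{1/2}(|v|^{2})(\zeta)/(1-|\zeta|^{2})^{2}$, which cancels the outer $(1-|\zeta|^{2})^{2}$. Using (b) and (a) we bound $|g|^{2}B^{1/2}(|f|^{2})\le|g|\cdot B^{1/2}(|g|^{2})B^{1/2}(|f|^{2})\le b_{f,g}|g|$; a further Cauchy--Schwarz in $\zeta$ together with $\int_{\D}B(|v|^{2})\,dA\lesssim\|v\|_{2}^{2}$ gives $\|C^{4}g1_{E}\|_{2}\lesssim b_{f,g}\|g1_{E}\|_{2}$.

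For $C^{3}$ on $g1_{E}$ the same duality approach applies, but the kernel $|1-\overline\zeta z|^{-3}$ (one power weaker than in $C^{4}$) forces the inner Cauchy--Schwarz to produce $(P^{+}_{B}(|f|^{2})(\zeta))^{1/2}B^{1/2}(|v|^{2})(\zeta)/(1-|\zeta|^{2})$, with only one Berezin factor. After cancellation with the outer $(1-|\zeta|^{2})$ and a Cauchy--Schwarz in $\zeta$, the first resulting factor rearranges via Fubini into an expression controlled by $\|P^{+}_{f,g}g1_{E}\|_{2}\|g1_{E}\|_{2}$, while the second is handled as in the $C^{4}$ case to produce the factor $b_{f,g}$. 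The main obstacle lies precisely in this last step: extracting $\|P^{+}_{f,g}g1_{E}\|_{2}$ rather than the a priori larger $\|P^{+}_{f,g}(|g|1_{E})\|_{2}$ requires delicate use of the non-absolute-value complex factor $1/(1-\overline\zeta z)$ in the $C^{3}$ kernel, so that the cancellation carried by $|g|g$ (versus $|g|^{2}$) is preserved.
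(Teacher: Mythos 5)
Your $C^1$ and $C^2$ estimates are correct and essentially identical to the paper's: split with Cauchy--Schwarz, use $|f|^2 B(|g|^2)\le b_{f,g}^2$ pointwise, and for $C^2$ finish with Fubini and the subcritical Forelli--Rudin bound $\int_\D|1-\overline\zeta z|^{-4}\,dA(z)\sim(1-|\zeta|^2)^{-2}$.

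The $C^4$ argument has a genuine gap. After your symmetric Cauchy--Schwarz and cancellation you need $\int_\D B(|v|^2)\,dA\lesssim\|v\|_2^2$, but Fubini turns the left-hand side into
\[\int_\D|v(w)|^2\left(\int_\D\frac{(1-|\zeta|^2)^2}{|1-\overline w\zeta|^4}\,dA(\zeta)\right)dA(w),\]
and the inner Forelli--Rudin integral is the critical case (numerator power $2$, denominator power $4 = 2+2+0$), so it grows like $\log\frac{2}{1-|w|^2}$. The weight in parentheses is unbounded, and the estimate fails for $v$ concentrated near $\T$. The symmetric split spends all of $(1-|\zeta|^2)^2$ against the $B^{1/2}(|f|^2)$ factor and leaves the $B^{1/2}(|v|^2)$ factor with nothing to temper the singularity; your $C^2$ escaped this only because there $(1-|\zeta|^2)^2$ sits outside the square so the $z$-integral is subcritical. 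Your $C^3$ sketch inherits the same difficulty and, as you yourself note, never actually produces the factor $\|P^+_{f,g}g1_E\|_2^{1/2}$.

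The paper's $C^4$ decomposition is asymmetric rather than dual. Expanding $\|C^4_{f,g}u\|_2^2$ as a triple integral over $(z,\lambda,w)$ with kernel $|1-\overline z\lambda|^{-4}|1-\overline z w|^{-4}$, one uses the triangle inequality $|1-\overline\lambda w|\le|1-\overline z w|+|1-\overline\lambda z|$ (from \cite{Aleman20122359}) to dominate it, up to symmetry, by $|1-\overline\lambda w|^{-4}|1-\overline z w|^{-4}$. Integrating out $z$ produces exactly $B(|f|^2)(w)$, so $|g|^2 B(|f|^2)\le b_{f,g}^2$ applies, and the remaining $w$-integral $\int|1-\overline\lambda w|^{-4}\,dA(w)\sim(1-|\lambda|^2)^{-2}$ is subcritical and cancels $(1-|\lambda|^2)^2$ with no logarithmic loss. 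For $C^3$ the paper is also much simpler and quite different from your sketch: a single Cauchy--Schwarz in $\zeta$ gives the pointwise bound $|C^3_{f,g}u(z)|\le(P^+_{f,g}|u|(z))^{1/2}(C^4_{f,g}|u|(z))^{1/2}$, and H{\"o}lder in $z$ plus the already established $C^4$ bound give the claim; this is how $\|P^+_{f,g}g1_E\|_2^{1/2}$ enters, not through duality and $P^+_B(|f|^2)$, which has no useful bound in terms of $b_{f,g}$.
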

\begin{proof} By the Cauchy-Schwartz inequality we have 
$$ |C^{1}_{f, g}u(z)| \le|f(z)|B^{1/2}(|g|^2)(z)\|u\|_2\le b_{f,g}\|u\|_2\,.$$ Similarly,
$$|C^{2}_{f, g}u(z)|\le|f(z)| B^{1/2}(|g|^2)(z)\left(\int_\D \frac{|u(\z)|^2(1-|\z|^2)^2}{|1-\overline{\z}z|^4}dA(\z)\right)^{1/2}\,,$$
so that 
$$\|C^{2}_{f, g}u\|_2^2\le b^2_{f,g}\int_\D\left(\int_\D \frac{|u(\z)|^2(1-|\z|^2)^2}{|1-\overline{\z}z|^4}dA(\z)\right)dA(z)\lesssim  b^2_{f,g}\|u\|^2_2$$
by a standard estimate for integrals (see for example page 10 in \cite{MR1758653}). Another application of  the Cauchy-Schwartz inequality shows that it will suffice to prove the estimate for $C^{4}_{f, g}$ since 
$$|C^{3}_{f, g}u(z)|\le (P^+_{f,g}u(z))^{1/2}C^{1/2}_4u(z)\,.$$ This follows essentially the argument in  \cite{Aleman20122359} (proof of Lemma 3.1). Use the inequality $|1-\overline{\l}w|\le 
|1-\overline{z}w|+|1-\overline{\l}z|$ to obtain
\begin{align*}
&\|C^{4}_{f, g}u\|_2^2\le\\& \int_\D\int_\D\int_\D|f(z)|^2\frac{|gu(\l)||gu(w)|(1-|\l|^2)^2(1-|w|^2)^2}{|1-\overline{z}w|^4|1-\overline{z}\l|^4}
dA(\l)dA(w)dA(z)\\&
\lesssim \int_\D\int_\D\int_\D|f(z)|^2\frac{|gu(\l)||gu(w)|(1-|\l|^2)^2(1-|w|^2)^2}{|1-\overline{\l}w|^4|1-\overline{z}\l|^4}
dA(\l)dA(w)dA(z)\\&
+\int_\D\int_\D\int_\D|f(z)|^2\frac{|gu(\l)||gu(w)|(1-|\l|^2)^2(1-|w|^2)^2}{|1-\overline{\l}w|^4|1-\overline{z}w|^4}
dA(\l)dA(w)dA(z)\\& = 2 \int_\D\int_\D\int_\D|f(z)|^2\frac{|gu(\l)||gu(w)|(1-|\l|^2)^2(1-|w|^2)^2}{|1-\overline{\l}w|^4|1-\overline{z}w|^4}
dA(\l)dA(w)dA(z)\\&=2\int_\D\int_\D B(|f|^2)(w)|gu(w)|\frac{|gu(\l)|(1-|\l|^2)^2}{|1-\overline{\l}w|^4}
dA(\l)dA(w)\,.
\end{align*}
 When $u=gv$ the inequality $|g|^2B(|f|^2)\le b_{f,g}^2$ together with the standard  estimate for integrals mentioned above yield
 $$\|C^{4}_{f, g}gv\|^2\lesssim b_{f,g}^2\|v\|_\infty\|g^2v\|_1\,,$$
and choosing $v=1_E$ the result follows. 
\end{proof} 
In what follows we shall use the well known complex differential operators $\partial=\frac{\partial}{\partial z},~
\overline{\partial}=\frac{\partial}{\partial \overline{z}}$. Let us also note that if $f,g \in A^{2}(\D)$ and $T_fT_g^*$ is bounded on $A^{2}(\D)$ then it is bounded on $L^2(\D)$ with the same norm, since for $u\in L^2(\D)$ we have $$T_fT_g^*u=T_fT_g^*Pu\,,$$
where $P$ is the Bergman projection.
\begin{lemma}\label{partialderiv}
 For  $f,g \in A^{2}(\D)$ with $f(0)=0$, $u\in L^2(\D)$ and $z\in \D$ let 
 $$Ru(z)=\int_\D\frac{|gu(\z)|}{|1-\overline{\z}z|^2}dA(\z)\,,$$
 and let $$Tu(z)=\frac1{z}Ru(z)- (1-|z|^2)\partial Ru(z)\,,\quad Su(z)=Ru(z)-\frac{(1-|z|^2)^2}{2}\partial\overline{\partial}Ru(z)\,. $$
Then
$$\|fTg1_E\|_2\lesssim (\|T_fT_g^*\|+b_{f,g})\|g1_E\|_2+ \|P^+_{f,g}g1_E\|_2^{1/2}b^{1/2}_{f,g}\|g1_E\|_2^{1/2}\,,$$
and 
$$\|fSg1_E\|_2 \lesssim (\|T_fT_g^*\|+b_{f,g})\|g1_E\|_2\,,$$
for all measurable sets $E\subset\D$.
\end{lemma}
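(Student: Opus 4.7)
The plan is to decompose the kernel $|1-\bar\z z|^{-2}$ via Lemma \ref{kernelest}, apply the differential operators defining $T$ and $S$ to the resulting pieces, and then identify the main analytic contributions with $T_f T_g^*(g 1_E)$ and the errors with the operators $C^j_{f,g}$ from Lemma \ref{p1est}. The key algebraic identity is
$$\int_\D \frac{\bar\z z\, v(\z)}{(1-\bar\z z)^2}\,dA(\z) = P_B(v)(z) - C_1(v)(z),\quad C_1(v)(z):=\int_\D \frac{v(\z)}{1-\bar\z z}\,dA(\z),$$
which follows from $\bar\z z/(1-\bar\z z)^2 = (1-\bar\z z)^{-2} - (1-\bar\z z)^{-1}$. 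With $u=g1_E$ this will link the analytic part of $Ru$ to $T_f T_g^*(g 1_E)$, up to a Cauchy-type correction $fC_1(|g|^2 1_E)$.

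First, I write $Ru = -zI_1(z) + J(z)$ using the first identity of Lemma \ref{kernelest}, where $I_1(z) = \int \bar\z|gu|/(1-\bar\z z)^2\,dA$ is analytic in $z$ and $J(z) = \int(1-|z\z|^2)|gu|/[(1-\bar\z z)|1-\bar\z z|^2]\,dA$. Since $f(0)=0$, the function $h := f/z$ is analytic, and a direct computation gives $\partial Ru = -I_1 - zI_1' + \partial J$, so that
$$fTu = -\bar z\, f I_1 + h J + z(1-|z|^2)\, f I_1' - f(1-|z|^2)\partial J.$$
For $fTu$: the term $-\bar z\, f I_1$ is converted via the key identity into $-\bar z(T_fT_g^*(g 1_E) - fC_1(|g|^2 1_E))/z$; using $\|F/z\|_{A^2}\le\sqrt 2\|F\|_{A^2}$ for analytic $F$ vanishing at $0$, this yields the $\|T_fT_g^*\|\|g 1_E\|_2$ contribution and a $C_1$-error that is absorbed via a Cauchy–Schwarz bound $|fC_1(|g|^2 1_E)|\le|f|^{1/2}P^+_{f,g}(|g|1_E)^{1/2}\|g 1_E\|_2$. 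The term $hJ$ is handled by splitting $1-|z\z|^2 = (1-|z|^2) + |z|^2(1-|\z|^2)$: the first summand combines with $h$ to give a $C^1_{f,g}$-integral, and the second (using $h|z|^2=\bar z f$) produces a $\bar z\, C^3_{f,g}$-integral, bounded by $\|P^+_{f,g}g1_E\|_2^{1/2}b_{f,g}^{1/2}\|g 1_E\|_2^{1/2}$ via Lemma \ref{p1est}. The remaining $fz(1-|z|^2)I_1'$ and $f(1-|z|^2)\partial J$ terms reduce, after Cauchy–Schwarz in the $\z$-integral and the elementary estimate $(1-|z|^2)/|1-\bar\z z|\le 2$, to $C^2_{f,g}$ and $C^4_{f,g}$-type integrals bounded by $b_{f,g}\|g 1_E\|_2$.

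For $fSu$: since $\partial\bar\partial|1-\bar\z z|^{-2} = |\z|^2/|1-\bar\z z|^4$, we have
$$Su(z) = \int|gu|\Bigl(\frac{1}{|1-\bar\z z|^2} - \frac{(1-|z|^2)^2|\z|^2}{2|1-\bar\z z|^4}\Bigr)\,dA.$$
Substituting the second (symmetric) identity of Lemma \ref{kernelest} and using the algebraic identity $(1-|z\z|^2)^2 - (1-|z|^2)^2|\z|^2 = (1-|\z|^2)(1-|z|^4|\z|^2)$, the bracketed kernel becomes
$$-\operatorname{Re}\tfrac{\bar\z z}{(1-\bar\z z)^2} + \tfrac{1-|z\z|^2}{2|1-\bar\z z|^2} + \tfrac{(1-|\z|^2)(1-|z|^4|\z|^2)}{2|1-\bar\z z|^4}.$$
The first term, via the key identity, contributes $-|f|\operatorname{Re}P_B(|g|^2 1_E) + |f|\operatorname{Re}C_1(|g|^2 1_E)$; the former is $\le|T_fT_g^*(g 1_E)|$ pointwise. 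The middle term is split $1-|z\z|^2 = (1-|z|^2)+|z|^2(1-|\z|^2)$, giving a $C^1_{f,g}$-contribution and a $|z|^2$ times a new integral. Crucially, the purely Cauchy term $|f|\operatorname{Re}C_1(|g|^2 1_E)$ pairs with the conjugate combination produced by the symmetric structure $I-\tfrac12\widetilde\Delta$ (i.e.\ because $S$ involves both $\partial$ and $\bar\partial$), and its contribution collapses to $C^1_{f,g}$- and $C^2_{f,g}$-type integrals bounded by $b_{f,g}\|g 1_E\|_2$, with \emph{no} residual $P^+$ factor. The final term is estimated using $1-|z|^4|\z|^2 \le 2(1-|z|^2)+(1-|\z|^2)$, giving exactly $C^2_{f,g}$- and $C^4_{f,g}$-contributions, bounded by $b_{f,g}\|g 1_E\|_2$ via Lemma \ref{p1est}.

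The main obstacle is identifying the precise cancellation that makes the $S$-estimate free of the $P^+$-factor. This cancellation is not formal: it relies on the fact that $S = I - \tfrac12\widetilde\Delta$ commutes with the involution $z\leftrightarrow\bar z$ in the kernel, so that the non-symmetric piece $C_1(|g|^2 1_E)/(1-\bar\z z)$ of the analytic decomposition pairs with its conjugate to produce an expression controlled pointwise by the symmetric $C^j_{f,g}$-kernels of Lemma \ref{p1est}. The asymmetric operator $T$ lacks this structure, which forces the $\|P^+_{f,g}g 1_E\|_2^{1/2}b_{f,g}^{1/2}\|g 1_E\|_2^{1/2}$ term. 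A secondary technical point is the correct use of the hypothesis $f(0)=0$: it is needed exactly when we divide by $z$ (in $fTu = h\,Ru - f(1-|z|^2)\partial Ru$), where $F/z$ is bounded in $A^2$ by $\sqrt 2 \|F\|_{A^2}$ for analytic $F$ vanishing at $0$.
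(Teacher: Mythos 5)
Your proposal starts from the same kernel identity (Lemma \ref{kernelest}) and aims at the same target identity, but takes a computationally different route that introduces a genuine gap. You first split $Ru=-zI_1+J$ and then differentiate by the product rule, obtaining
$Tu=-|z|^2I_1+\frac{J}{z}+z(1-|z|^2)I_1'-(1-|z|^2)\partial J$
(note: your ``$-\bar z\,fI_1$'' should read $-|z|^2fI_1$). You then try to estimate each summand separately. This cannot work: the terms $z(1-|z|^2)fI_1'$, $(1-|z|^2)f\partial J$, and the $(1-|z|^2)$-part of $hJ$ do \emph{not} reduce to $C^2_{f,g}$- or $C^4_{f,g}$-type integrals. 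For instance
$$\bigl|z(1-|z|^2)fI_1'\bigr|\le 2|f(z)|(1-|z|^2)\int_\D\frac{|gu(\z)|}{|1-\bar\z z|^3}\,dA(\z)\le 4\,P^+_{f,g}|u|(z),$$
after using $(1-|z|^2)/|1-\bar\z z|\le 2$ --- that is, the naive pointwise bound produces exactly the operator $P^+_{f,g}$ whose boundedness you are trying to establish, with no extra factor $(1-|z|^2)$ or $(1-|\z|^2)$ left to generate a $C^j$-kernel. The same happens with $(1-|z|^2)f\partial J$ and with the $\frac{f}{z}(1-|z|^2)\tilde J_1$ piece of $hJ$ (there the denominator is $|1-\bar\z z|^3$, not the $|1-\bar\z z|^2$ of $C^1_{f,g}$ as you claim). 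These three terms in fact \emph{cancel} against one another: if you expand $\frac{J}{z}$ via $\frac1{z(1-\bar\z z)}=\frac1z+\frac{\bar\z}{1-\bar\z z}$ and substitute back $\partial Ru=-I_1-zI_1'+\partial J$, the derivative terms $I_1'$ and $\partial J$ drop out identically, and one recovers the clean four-term formula of the paper. Your proposal never exhibits this cancellation, and the separate estimates you propose for $I_1'$ and $\partial J$ would fail.

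The paper avoids the issue entirely by reorganizing at the \emph{kernel} level before ever taking a derivative. It rewrites $\frac1{z|1-\bar\z z|^2}$ as a five-term algebraic sum, using the partial fraction $\frac1{z(1-\bar\z z)}=\frac1z+\frac{\bar\z}{1-\bar\z z}$ and the two splittings $1-|z\z|^2=(1-|z|^2)+|z|^2(1-|\z|^2)=(1-|\z|^2)|z|^2+(1-|z|^2)$. Exactly one term integrates to $(1-|z|^2)\partial Ru$ and cancels in $Tu$; the remaining four are manifestly one purely analytic kernel and three $C^j$-type kernels, each carrying a factor $(1-|z|^2)$ or $(1-|\z|^2)$, so Lemma \ref{p1est} applies with no hidden cancellations. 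A second, smaller difference: you identify the Toeplitz contribution via $\bar\z z/(1-\bar\z z)^2=(1-\bar\z z)^{-2}-(1-\bar\z z)^{-1}$, which introduces the Cauchy correction $C_1$ with kernel $(1-\bar\z z)^{-1}$ --- a kernel that is \emph{not} among the $C^j_{f,g}$ of Lemma \ref{p1est} because it carries no vanishing factor $(1-|z|^2)$ or $(1-|\z|^2)$. The paper instead keeps the $\bar\z$ factor and writes $\int\frac{\bar\z|gu|}{(1-\bar\z z)^2}dA$ as $T_fT_g^*$ applied to $U_g^*M^*|u|$ (with the $\bar\z$ absorbed by $M^*$), which produces the Toeplitz contribution with no residual Cauchy term at all. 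Finally, for $fSu$ the paper again gives an explicit six-term algebraic identity rather than invoking an unelaborated ``symmetric cancellation''. In short, the decomposition you chose is not wrong in principle, but it pushes the essential cancellation inside terms that you then estimate in absolute value, which destroys it; and the proof in the paper is specifically designed so that no such cancellation is needed.
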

\begin{proof}
Rewrite the first identity in Lemma \ref{kernelest} as 
\begin{align*}\frac1{z|1-\overline{\z}z|^2}&= -\frac{\overline{\z}}{(1-\overline{\z}z)^2} +
\frac{1-|z\z|^2}{z|1-\overline{\z}z|^2}+
\frac{(1-|z\z|^2)\overline{\z}}{(1-\overline{\z}z)|1-\overline{\z}z|^2}\\& =  -\frac{\overline{\z}}{(1-\overline{\z}z)^2} +
\frac{1-|z|^2}{z|1-\overline{\z}z|^2}+ \frac{(1-|\z|^2)\overline{z}}{|1-\overline{\z}z|^2}\\&+
\frac{(1-|\z|^2)\overline{\z}|z|^2}{(1-\overline{\z}z)|1-\overline{\z}z|^2}+\frac{(1-|z|^2)\overline{\z}}{(1-\overline{\z}z)|1-\overline{\z}z|^2}
\end{align*}
 Let $M$ be the operator of multiplication by the independent variable on $L^2(\D),~Mv(z)=zv(z)$. It is obvious that $M$ is a bounded operator on $L^{2}(\D)$. As it turns out it also satisfies a bound from below in some cases, namely,
 \begin{equation}\label{subharmonic}
 \|v\|_{L^{2}(r\D)}\lesssim \|Mv\|_{L^{2}(r\D)}\,,\end{equation}
 and
  \begin{equation}\label{wsubharmonic}
  \|v\|_{L^{2}_{(1-|z|^{2})^{2}}(r\D)}\lesssim\|Mv\|_{L^{2}_{(1-|z|^{2})^{2}}(r\D)}\,,\end{equation}
 valid for all subharmonic functions $v$ in $\D$ and all $0<r\leq 1$. These estimates can be easily deduced from the subharmonicity of $v$.
For  a measurable function $h$ on $\D$ let $\phi_h(z)=\overline{h(z)}/|h(z)|$, when $h(z)\ne 0$, and $\phi_h(z)=1$ otherwise, and denote by $U_h$ the unitary operator of multiplication by $\phi_h$ on $L^2(\D)$. 
Multiply both sides by $|gu(\z)|$, integrate on $\D$ w.r.t. $dA(\z)$, and note that
$$\partial Ru(z)=\int_\D\frac{|gu(\z)|\overline{\z}}{(1-\overline{\z}z)|1-\overline{\z}z|^2}dA(\xi)\,.$$
Using the above notations
we obtain 
\begin{align*}|f|Tu(z)&=\frac1{z}|f|Ru(z)-(1-|z|^2)|f|\partial Ru(z)\\&= -(U_fT_fT_g^*U^*_gM^*|u|)(z)+ \frac1{z}(C^{1}_{f, g}|u|(z))+M^*(C^{1}_{g, f})^*|u|(z)\\&+(M^*MC^{3}_{f, g}M^*|u|)(z)\,.\end{align*}
If we let $u=g1_E$ then the first estimate in the statement follows directly by  Lemma \ref{p1est} together with the fact that $b_{f,g}=b_{g,f}$ and \eqref{wsubharmonic}.
The proof of  the second estimate is similar. We rewrite the second  identity in Lemma \ref{kernelest} as
\begin{align*}\frac1{|1-\overline{\z}z|^2}&=
-\text{\rm Re}\frac{\overline{\z}z}{(1-\overline{\z}z)^2}+\frac{1-|z|^2}{2|1-\overline{\z}z|^2}+\frac{|z|^2(1-|\z|^2)}{2|1-\overline{\z}z|^2}
+\frac{(1-|\z |^2)^2}{2|1-\overline{\z}z|^4}\\&
+\frac{(1-|z|^2)(1-|\z |^2)(|\z|^2+|\z|^2)}{2|1-\overline{\z}z|^4}+\frac{(1-|z |^2)^2|\z|^4}{2|1-\overline{\z}z|^4}
\,,\end{align*}
multiply both sides by $|gu(\z)|$, integrate on $\D$ w.r.t. $dA(\z)$, and note that
$$(1-|z|^2)^2\partial\overline{\partial}Ru(z)=(1-|z |^2)^2\int_\D\frac{|gu(\z)||\z|^2}{|1-\overline{\z}z|^4}dA(\z)\,.$$
Thus with the notations above we have
\begin{align*}&|f|Su(z)= -\text{Re}(MU_fT_fT_g^*U^*_gM^*|u|)(z)+\frac1{2}C^{1}_{f, g}|u|(z)+\frac1{2}M^*M(C^{1}_{f, g})^*|u|(z)\\&+\frac1{2}C^{4}_{f, g}|u|(z)
+\frac1{2}(C^{2}_{f, g}(2M^{*} M)|u|)(z)+ \frac1{2}((I-M^{*}M) C^{2}_{f, g} M^{*} M|u|)(z)\,. \end{align*}
If we let $u=g1_E$ then  the result follows  by  another application of Lemma \ref{p1est}.
\end{proof}
With the lemmas in hand we can now proceed to the proof of our theorem.\\

{\it Proof of Theorem \ref{pplus}.} 
Of course, the interesting part is to prove the boundedness of $P_{f,g}^+$ under the assumption that $T_fT_g^*$ is bounded.
By Corollary \ref{c.twmBp} it suffices to show that
\begin{equation}\label{toshow} \|P_{f,g}^{+}|g|1_{Q_{I}}\|_2\lesssim\||g|1_{Q_{I}}\|_2\,,\quad \|P_{g,f}^{+}|f|1_{Q_{I}}\|_2\lesssim\||f|1_{Q_{I}}\|_2\,,
\end{equation}
for all Carleson boxes $Q_{I}$ with $I$ an interval in $\mathbb T$. To this end, let us assume first that $f(0)=0$, and that  $u\in L^2(\D)$ is compactly supported. We shall focus our attention on the function 
\begin{equation}\label{mainobj} \E(z)=(1-|z|^2)^2\partial\overline{\partial}(P_{f,g}^+|u|)^2(z)-\text{Re}\left(\frac{(1-|z|^2)}{z}\overline{\partial}(P_{f,g}^+|u|)^2(z)\right)\end{equation}
The standard growth estimate  for Bergman space functions (see page 54 in \cite{MR1758653}) shows that under our assumptions we can apply Stokes' formula and one of the Green's identities
to conclude that 
\begin{align*}
\int_\D\E(z)dA(z)&=\int_\D (P_{f,g}^+|u|)^2(z)(\partial\overline{\partial}(1-|z|^2)^2)+\text{Re}\left(\frac{1}{z}\overline{\partial}(1-|z|^2)\right)dA(z)\\&
=\int_\D (P_{f,g}^+|u|)^2(z)(4|z|^2-3)dA(z)\,,\end{align*}
so that 
\begin{equation}\label{stokes}
\int_\D\E(z)dA(z)\le \int_\D (P_{f,g}^+|u|)^2(z)dA(z)\,.\end{equation}
With the notation in Lemma \ref{partialderiv} we have  $(P_{f,g}^+|u|)^2=|f|^2R^2u$, and a direct computation gives 
$$\overline{\partial}(P_{f,g}^+|u|)^2=\overline{f'}fR^2u +  2|f|^2 Ru \overline{\partial} Ru\,.$$
In the formulas below we will commit a convenient abuse of notation and  write $z$ also for the  identity function on $\D$. Use  Lemma \ref{partialderiv} to obtain
$$\frac1{z}\overline{\partial}(P_{f,g}^+|u|)^2(z)=\frac1{z}\overline{f'}(z)f(z)R^2u(z) +  2(1-|z|^2)|f|^2(z) \partial Ru (z)\overline{\partial} Ru(z) +2|f|^2(z)Tu(z)\overline{\partial} Ru(z)\,.$$
Obviously, $(1-|z|^2)|\overline{\partial} Ru|\le 2Ru$, and $ \partial Ru \overline{\partial} Ru \ge 0$, hence 
\begin{align}\label{d-barpart}
&\text{\rm Re}\left(\frac{(1-|z|^2)}{z}\overline{\partial}(P_{f,g}^+|u|)^2(z) \right)\le (1-|z|^2)\text{Re} \frac1{z}\overline{f'}(z)f(z)R^2u(z)\\&\nonumber
+ 2(1-|z|^2)^2|f|^2\partial Ru \overline{\partial} Ru+ 4|f|^2Ru|Tu|\,. \end{align}
Similarly, we compute
$$\partial\overline{\partial}(P_{f,g}^+|u|)^2= |f'|^2R^2u+4\text{Re}\overline{f'}fRu\partial Ru
 + 2|f|^2\partial Ru \overline{\partial} Ru + 2|f|^2 Ru \partial\overline{\partial} Ru\,.$$ 
and apply Lemma  \ref{partialderiv} to obtain
\begin{align}\label{laplacepart} &(1-|z|^2)^2\partial\overline{\partial}(P_{f,g}^+|u|)^2(z)= (1-|z|^2)^2|f'|^2(z)R^2u(z)\\&\nonumber+4(1-|z|^2)\text{Re}\left(\frac{1}{z}\overline{f'}(z)f(z)R^2u(z)\right) + 2(1-|z|^2)^2|f|^2(z)\partial Ru(z) \overline{\partial} Ru(z) +4|f|^2(z)R^2u(z) \\&\nonumber -4 (1-|z|^2)\text{Re} \left (\overline{f'}(z)f(z)Ru(z)Tu(z) \right)-4|f|^2(z)Ru(z)Su(z)\,.
\end{align}
From \eqref{d-barpart} and \eqref{laplacepart} we have 
\begin{align}\label{firstEest}
\E&\ge (1-|z|^2)^2|f'|^2(z)R^2u(z)+3(1-|z|^2)\text{Re}\left(\frac{1}{z}\overline{f'}(z)f(z)R^2u(z)\right) +4|f|^2(z)R^2u(z) \\&\nonumber-4 (1-|z|^2)\text{Re} \left (\overline{f'}(z)f(z)Ru(z)Tu(z) \right)-4|f|^2(z)Ru(z)Su(z) -4|f|^2(z)Ru(z)|Tu(z)|\,.\end{align} Fix $\delta\in (\frac3{4},1)$ and use the inequalities
\begin{align*}&\delta (1-|z|^2)^2|f'|^2(z)R^2u(z)+3(1-|z|^2)\text{Re}\left(\frac{1}{z}\overline{f'}(z)f(z)R^2u(z) \right) \ge -\frac{9}{4\delta|z|^2}|f|^2(z)R^2u(z)\\& = -\frac{9}{4\delta}|f|^2(z)R^2u(z)- \frac{9}{4\delta|z|^2}|f|(z)Ru(z) C^{1}_{f, g}|u|  \,, \\ \end{align*}
$$(1-\delta)(1-|z|^2)^2|f'|^2(z)R^2u(z) -4 (1-|z|^2)\text{Re}\left(\overline{f'}(z)f(z)Ru(z)Tu(z)\right)\ge -\frac4{1-\delta}|f|^2(z)|Tu|^2(z)\,,$$
that come from completing squares to conclude that 
\begin{align*}\E(z)&\ge (4-\frac{9}{4\delta})|f|^2(z)R^2(z)-\frac4{1-\delta}|f|^2(z)|Tu|^2(z)-4|f|^2(z)Ru(z) Su(z)\\& -4|f|^2(z)Ru(z)|Tu|(z)-\frac{9}{4\delta|z|^2}|f|(z)Ru(z) C^{1}_{f, g}|u|(z)\,.\end{align*}
Now recall that $|f|^2R^2u=(P_{f,g}^+|u|)^2$, and use the previous inequality in \eqref{stokes} together with the Cauchy-Schwartz inequality and the estimates \eqref{subharmonic} and \eqref{wsubharmonic} to obtain
\begin{align*}&(3-\frac{9}{4\delta})\|P^+_{f,g}|u|\|_2^2 \le\frac4{1-\delta}\|fTu\|_2^2+4\|P^+_{f,g}|u|\|_2\|fSu\|_2\\& +4\|P^+_{f,g}|u|\|_2\|fTu\|_2+\frac{9k}{4\delta} \|P^+_{f,g}|u|\|_2\|C^{1}_{f, g}|u|\|_2\,,\end{align*}
where $k$ is the constant in \eqref{subharmonic}. Now let $u=g1_E$ for a measurable set $E$ with $\overline{E}\subset\D$. By the last inequality and the lemmas \ref{partialderiv} and \ref{p1est} we have   
\begin{align}&\label{finalest}(3-\frac{9}{4\delta})\|P^+_{f,g}|g|1_E\|_2^2\lesssim (\|T_fT_g^*\|+ b_{f,g})^2\|g1_E\|_2^2\\&\nonumber
+\|P^+_{f,g}|g|1_E\|_2 (\|T_fT_g^*\|+b_{f,g})\|g1_E\|_2 +\|P^+_{f,g}|g|1_E\|_2^{3/2}b^{1/2}_{f,g}    \|g1_E\|^{1/2}_2\,.
 \end{align}
Since $(3-\frac{9}{4\delta})>0$, this immediately implies that 
$$\|P^+_{f,g}|g|1_E\|_2\lesssim (\|T_fT_g^*\|+ b_{f,g})\|g1_E\|_2\,.$$
The assumption that $\overline{E}\subset \D$ is easily removed by an approximation argument and Fatou's lemma, while the assumption $f(0)=0$ can be removed by another use of \eqref{subharmonic}. Finally, the remaining estimate in \eqref{toshow} is obtained by interchanging $f$ and $g$, so that the proof is complete. $\square$


\section{A counterexample to Sarason's conjecture for Bergman space}

\label{sec:counter}

Recall that for $f,g\in L_a^2$, we have denoted by $b_{f,g}$  the supremum of the product of the Berezin transforms of $|f|^2$ and $|g|^2$.
In this section we will prove Theorem \ref{t.conjfails}. The proof requires several steps. We begin with the following notations.
The Dirichlet space $D$ consists of analytic  functions $u$ in $\D$ whose derivative belongs to $L_a^2$, and the norm is defined by $$\|u\|_D^2=|u(0)|^2+\|u'\|_2^2\,.$$
Given $f\in L_a^2$ we denote by
$$\gamma^2(f)=\sup_I \log\frac{2\pi}{|I|}\int_{Q_I}|f|^2dA\,,$$
where the supremum is taken over all arcs $I\subset\T$, and by
$$\delta^2(f)=\sup_{\|u\|_D\le 1}\int_\D|fu|^2dA\,.$$
It is well known and easy to prove that $$\gamma(f)\lesssim\delta(f)\,.$$
The fact that these quantities are not comparable has been discovered by Stegenga \cite{MR0550655} and will play an essential role in our argument.\\
 The next lemma relates these numbers to the boundedness of Toeplitz products and products of Berezin transforms.
\begin{lemma}\label{ourex} Let  $f\in L_a^2$, and let $g$  be  a Lipschitz analytic function in $\D$ with \begin{equation}\label{lowbg}|g(z)|\ge c(1-|z|)\,,\end{equation}
for some constant $c>0 $ and all $z\in \D$.\\
(i)  If $fg\in H^\infty$ and  $\gamma(f)<\infty$ then $b_{f,g} <\infty$.\\
 (ii) If  $T_fT_g^*$ is bounded then $\delta(f)<\infty$.
\end{lemma}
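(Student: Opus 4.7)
The plan is to handle the two directions separately. For part (i), I would reduce the estimate on $b_{f,g}$ to a dyadic analysis over Carleson boxes. Starting from the factorisation $|f|^{2}=|fg|^{2}/|g|^{2}$ and the lower bound $|g(\zeta)|\ge c(1-|\zeta|)$, together with the Lipschitz property of $g$ (which, combined with $g$ vanishing at the boundary points where $f$ blows up, forces the matching upper bound $|g(\zeta)|\lesssim 1-|\zeta|$), I would fix $z\in\D$ with $1-|z|=\epsilon$ and decompose $\D$ into the concentric Carleson boxes $Q_{I_{k}}$ for arcs $I_{k}$ of length $2^{k}\epsilon$ centred at $z/|z|$. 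On the dyadic shell $Q_{I_{k}}\setminus Q_{I_{k-1}}$ the Berezin kernel satisfies $K(z,\zeta)\simeq 2^{-4k}\epsilon^{-2}$. The hypothesis $\gamma(f)<\infty$ gives $\int_{Q_{I_{k}}}|f|^{2}\,dA\lesssim \gamma(f)^{2}/\log(1/|I_{k}|)$, and the upper bound on $g$ gives $\int_{Q_{I_{k}}}|g|^{2}\,dA\lesssim |I_{k}|^{4}$. Summing the dyadic contributions produces
$$
B(|f|^{2})(z)\lesssim \frac{\gamma(f)^{2}}{\epsilon^{2}\log(1/\epsilon)},\qquad B(|g|^{2})(z)\lesssim \epsilon^{2}\log(1/\epsilon),
$$
so that the product $B(|f|^{2})(z)\,B(|g|^{2})(z)$ is dominated by an absolute multiple of $\gamma(f)^{2}$, uniformly in $z$. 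This proves $b_{f,g}<\infty$.

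For part (ii), the first step is to test the operator bound on Bergman reproducing kernels $k_{w}(z)=(1-\bar{w}z)^{-2}$. Using the standard identity $T_{\bar{g}}k_{w}=\overline{g(w)}\,k_{w}$ (which holds since $g$ is analytic and bounded) together with $\|T_{f}T_{g}^{*}k_{w}\|_{2}\le \|T_{f}T_{g}^{*}\|\cdot\|k_{w}\|_{2}$, I would extract $|g(w)|^{2}B(|f|^{2})(w)\le \|T_{f}T_{g}^{*}\|^{2}$; combined with $|g(w)|\ge c(1-|w|)$ and the subharmonic bound $|f(w)|^{2}\le B(|f|^{2})(w)$, this yields the pointwise growth $|f(w)|\lesssim (1-|w|)^{-1}$. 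To upgrade this to the Dirichlet-Carleson estimate $\delta(f)<\infty$, I would apply to any $u\in D$ the Littlewood--Paley identity
$$
\|fu\|_{A^{2}}^{2}=|f(0)u(0)|^{2}+\int_{\D}|(fu)'|^{2}(1-|z|^{2})\,dA,
$$
expand $(fu)'=f'u+fu'$, and treat the two pieces $\int |fu'|^{2}(1-|z|^{2})\,dA$ and $\int|f'u|^{2}(1-|z|^{2})\,dA$ separately. Each of these reduces to showing that a suitable weighted measure built from $f$ (respectively $f'$) is a Carleson measure for $D$, and both such Carleson conditions must be extracted from the full boundedness of $T_{f}T_{g}^{*}$, equivalently of $P^{+}_{f,g}$ by Theorem \ref{pplus}, applied to test functions beyond mere reproducing kernels and combined with Stokes/Green-type identities of the kind appearing in the proof of Theorem \ref{pplus}.

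The main obstacle is precisely the step of deriving the Dirichlet-Carleson estimates in part (ii): the pointwise bound $|f(w)|\lesssim (1-|w|)^{-1}$ alone is too weak to control $\int|fu'|^{2}(1-|z|^{2})\,dA$ or $\int|f'u|^{2}(1-|z|^{2})\,dA$ by $\|u\|_{D}^{2}$, so one must exploit the full operator norm of $T_{f}T_{g}^{*}$ through a square-function or Stokes-based argument patterned on the derivative estimates used to prove Theorem \ref{pplus}. By contrast, part (i) is essentially a bookkeeping exercise with dyadic Carleson windows, the key input being the $1/\log$ decay of $\int_{Q_{I}}|f|^{2}\,dA$ supplied by $\gamma(f)<\infty$ perfectly matching the $|I|^{4}$ growth of $\int_{Q_{I}}|g|^{2}\,dA$ supplied by the Lipschitz condition.
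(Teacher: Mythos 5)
There are genuine gaps in both parts.

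For part (i), the step where you infer a matching upper bound $|g(\zeta)|\lesssim 1-|\zeta|$ from the Lipschitz property and $|g(z)|\ge c(1-|z|)$ is false: a Lipschitz analytic function satisfying the lower bound need not vanish anywhere (take $g\equiv 1$, or $g(z)=1-z$, which vanishes only at a single boundary point). Consequently your estimate $\int_{Q_{I_k}}|g|^2\,dA\lesssim |I_k|^4$ fails for large boxes, and the claimed bound $B(|g|^2)(z)\lesssim\epsilon^2\log(1/\epsilon)$ is wrong. The paper instead exploits the Lipschitz hypothesis differently: it keeps the term $|g(z)|^2$, writing $B(|g|^2)(z)\lesssim |g(z)|^2+(1-|z|^2)^2\log\frac2{1-|z|}$, and then controls the resulting cross term $|g(z)|^2B(|f|^2)(z)$ by $\|f\|_2^2+\|fg\|_\infty^2$ via a second use of the Lipschitz bound together with $fg\in H^\infty$. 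Only the remaining term $(1-|z|^2)^2\log\frac2{1-|z|}\,B(|f|^2)(z)$ is handled by your dyadic annulus computation with $\gamma(f)$; that part of your argument is essentially correct, but it is not the whole story.

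For part (ii), your reproducing-kernel test gives only the pointwise bound $|f(w)|\lesssim(1-|w|)^{-1}$, and you explicitly acknowledge that you do not know how to upgrade this to the Dirichlet--Carleson estimate. So the proposal is incomplete. The paper's argument is in fact much shorter and does not require any Littlewood--Paley or Stokes identity: it introduces the operator $Ru(z)=\int_\D\frac{(1-|\zeta|)u(\zeta)}{(1-\bar\zeta z)^2}\,dA(\zeta)$, notes that $R$ is bounded and invertible from $L_a^2$ onto $D$, observes that the lower bound $|g(\zeta)|\ge c(1-|\zeta|)$ yields the pointwise domination $|f(z)Ru(z)|\le P^+_{f,g}|u|(z)$, and then invokes Theorem~\ref{pplus} to get boundedness of $P^+_{f,g}$ from that of $T_fT_g^*$. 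This immediately gives $\|fv\|_2\lesssim\|v\|_D$ for all $v\in D$, i.e.\ $\delta(f)<\infty$. The key idea you are missing is precisely the identification of the weight $(1-|\zeta|)$ in the kernel of $P^+_{f,g}$ with the operator $R$ surjecting onto the Dirichlet space, which converts boundedness of the positive operator directly into the Carleson condition without any further square-function analysis.
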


\begin{proof}  (i)  Since $g$ is Lipschitz we have 
\begin{align*} B(|g|^2)(z)&\lesssim |g(z)|^2+(1-|z|^2)^2\int_\D \frac{|g(\zeta)-g(z)|^2}{|1-\bar{\z}z|^4}dA(\z)\\&
\lesssim  |g(z)|^2+(1-|z|^2)^2\int_\D\frac{1}{|1-\bar{\z}z|^2}dA(\z)\\&
\lesssim  |g(z)|^2+(1-|z|^2)^2\log\frac2{1-|z|}\,.\end{align*}
Similarly,
\begin{align*} |g(z)|^2B(|f|^2)(z)&\lesssim  (1-|z|^2)^2\int_\D\frac{|g(\zeta)-g(z)|^2|f(\z)|^2}{|1-\bar{\z}z|^4}dA(\z)
+B(|fg|^2)(z)\\&
\lesssim \|f\|^2_2+\|fg\|_\infty^2\,.\end{align*}
Moreover,  let 
$$A_k(z)=\D\cap\{2^k(1-|z|)\le 
|1-\overline{z}\z| \le 2^{k+1}(1-|z|)\}\,,$$
and note that 
\begin{align*}(1-|z|)^2&\log\frac2{1-|z|}B(|f|^2)(z)\sim\log\frac2{1-|z|}\sum_{2^k(1-|z|)\le 2}2^{-4k}\int_{A_k(z)} |f|^2dA \\&
\le \sum_{2^k(1-|z|)\le 2}(\log\frac2{2^k(1-|z|)}+k)2^{-4k}\int_{A_k(z)}|f|^2dA\,,\end{align*}
Clearly, each set  $A_k(z)$ is contained in a Carleson box of perimeter comparable to $2^k(1-|z|)$, hence
$$(\log\frac2{2^k(1-|z|)}+k)\int_{A_k(z)}|f|^2dA\lesssim \gamma^2(f)+k\|f\|_2^2$$
which implies 
$$\sup_{z\in \D}(1-|z|^2)^2\log\frac2{1-|z|}B(|f|^2)(z)\lesssim \gamma^2(f)+\|f\|_2^2\,.$$
Thus $$b_{f,g}\lesssim \|f\|_2^2+\|fg\|_\infty^2+\gamma^2(f)\,.$$
(ii) Let $$Ru(z)=\int_\D\frac{(1-|\z|)u(\z)}{(1-\bar{\z}z)^2}dA(\z)\,,\quad u\in L_a^2\,, z\in \D\,. $$
It is well known and easy to show that $R$ is a bounded invertible operator from   $L_a^2$ onto the Dirichlet space $D$. Moreover, if  $g$ satisfies \eqref{lowbg} we have the obvious inequality
$$|f(z)Ru(z)|\le P^+_{f,g}|u|(z)\,.$$  
 If   $T_fT_g^*$ is bounded then  by Theorem \ref{pplus} we have $$\|P^+_{f,g}|u|\|_2\lesssim
\|u\|_2\,,$$
for all $u\in L_a^2$, hence, by the above argument $$\|fv\|_2\lesssim \|v\|_D\,,$$
for all $v\in D$, and the proof is complete.
 \end{proof}
 
We now construct a special Lipschitz function $g$ with the property  \eqref{lowbg}.\\
Consider sequences $\alpha=(\alpha_j)$ , where all but finitely many terms are zero, and the remaining ones are  equal to one.
Let $$\lambda_0=1\,,\quad \lambda_j=2^{-2^j}\,, j=1,2\ldots\,.$$
Given a sequence $\alpha$  as above, let 
$$x_\alpha=\sum_{j\ge 1}\alpha_j(1-\lambda_j)\lambda_0\ldots\lambda_{j-1}\,,$$
and let $E_1\subset \mathbb R$ be the closure of the set of points $x_\alpha$. Finally, let $E$ be the preimage  of $E_1$ by the conformal map $\phi(z)=i\frac{1+z}{1-z}$, from the unit disc onto  the upper half-plane. The following lemma is a direct application of a result in \cite{MR538552} and has been suggested to us by Konstantin Dyakonov. 

\begin{lemma}\label{dynkin}   There exists a Lipschitz analytic function $g$ in $\D$ which satisfies \eqref{lowbg} and vanishes on $E\cup\{1\}$.
\end{lemma}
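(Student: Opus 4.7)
The strategy is to invoke Dyn'kin's theorem \cite{MR538552}, which provides, for every closed $F\subset\T$ whose complementary arcs $\{I_{n}\}$ satisfy the Beurling--Carleson condition
\[
\sum_{n}|I_{n}|\log\frac{2\pi}{|I_{n}|}<\infty,
\]
an analytic function $g$ on $\D$ that extends to $\Lambda_{1}(\overline{\D})$, vanishes precisely on $F$, and satisfies $|g(z)|\geq c\,\operatorname{dist}(z,F)$ throughout $\D$. The plan is to apply this with $F:=E\cup\{1\}$, viewed as a closed subset of $\T$.

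The first task is to verify the Beurling--Carleson condition for $F$. Tracking the construction of $E_{1}\subset[0,1]$: at stage $N$, at most $2^{N-1}$ new gaps are created, each of length comparable to $\lambda_{0}\lambda_{1}\cdots\lambda_{N-1}=2^{-(2^{N}-2)}$, by the super-geometric decay $\lambda_{j}=2^{-2^{j}}$. Since $\phi^{-1}:\mathbb{R}\to\T\setminus\{1\}$ is bi-Lipschitz on compact subsets and $E_{1}$ is bounded, the complementary arcs $\{I_{n}\}$ of $F$ in $\T$ have comparable lengths (with the single arc adjacent to $1$, corresponding to the unbounded component of $\mathbb{R}\setminus E_{1}$, contributing just one finite term). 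The Beurling--Carleson sum is then dominated by
\[
\sum_{N\geq 1} 2^{N-1}\cdot 2^{-(2^{N}-2)}\cdot(2^{N}\log 2) \;+\; O(1),
\]
which converges since $4^{N}\cdot 2^{-2^{N}}$ is summable.

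Dyn'kin's theorem then delivers an analytic Lipschitz function $g$ on $\D$ vanishing on $F$ and satisfying $|g(z)|\geq c\,\operatorname{dist}(z,F)$ for every $z\in\D$. Because $F\subset\T$, we have $\operatorname{dist}(z,F)\geq\operatorname{dist}(z,\T)=1-|z|$ for all $z\in\D$, and the required bound \eqref{lowbg} follows at once.

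The main obstacle is locating the exact statement in \cite{MR538552} that furnishes simultaneously membership in $\Lambda_{1}$ and the quantitative lower bound $|g|\gtrsim\operatorname{dist}(\cdot,F)$: classical Beurling--Rudin-type outer function constructions give the zero set, but controlling $|g|$ pointwise from below near $F$ requires Dyn'kin's sharper analysis via pseudoanalytic extensions. Verifying the Beurling--Carleson condition for our specific set is, by contrast, a short direct computation exploiting the super-geometric decay of $\lambda_{j}$.
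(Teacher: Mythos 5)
Your overall strategy — invoke Dyn'kin's work from \cite{MR538552} to produce an outer function comparable from below to the distance to the zero set — is the right one, and coincides with the paper's approach. But you have invoked the wrong hypothesis, and this is a genuine gap, not merely a matter of locating the right page.

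The Beurling--Carleson condition $\sum_n |I_n|\log\frac{2\pi}{|I_n|}<\infty$ is equivalent to $\log\operatorname{dist}(\cdot,F)\in L^1(\T)$, which is necessary and sufficient for $F$ to be the boundary zero set of \emph{some} analytic function of Lipschitz class. It does not yield the pointwise lower bound $|g(z)|\gtrsim\operatorname{dist}(z,F)$ throughout $\D$, and in fact cannot: a set like $\{e^{i/n}:n\geq 1\}\cup\{1\}$ satisfies Beurling--Carleson, yet any Lipschitz analytic $g$ vanishing on it must decay essentially quadratically near $1$, so the lower bound fails. The correct hypothesis of Dyn'kin's Theorem~4 is the porosity-type condition $(K)$: $|I|\lesssim\sup_{z\in I}\operatorname{dist}(z,E)$ for all arcs $I\subset\T$. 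This is strictly stronger than Beurling--Carleson, and its verification for the set $E$ at hand is not a summability computation but a combinatorial one. The paper proves it by first establishing the super-geometric gap ratio $\tau<\tfrac12$ in \eqref{tau}, then the midpoint separation estimate \eqref{Kstep}, and finally deducing $(K)$ from these. Your summability calculation, while probably correct, verifies the wrong condition; you would need to replace it with the argument showing that every arc meeting $E$ contains a point whose distance to $E$ is a fixed fraction of the arc length, which is exactly what the $\tau<\tfrac12$ structure of $E_1$ delivers.

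A further technical point you have elided: Dyn'kin's theorem (under $(K)$) produces a function $w_{1/2}$ with $|w_{1/2}|\sim\operatorname{dist}(\cdot,E)^{1/2}$ and $|w'_{1/2}|\lesssim\operatorname{dist}(\cdot,E)^{-1/2}$, \emph{not} directly a Lipschitz function comparable to $\operatorname{dist}(\cdot,E)$. One must square it — $g_1=w_{1/2}^2$ — precisely so that $|g_1'|=2|w_{1/2}w'_{1/2}|\lesssim 1$, which is how the Lipschitz bound is obtained. Applying Dyn'kin's theorem at exponent $1$ directly would give the modulus but not the derivative control. Finally, the vanishing at $z=1$ is handled separately by multiplying by $(1-z)$, since $1\notin E$.
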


\begin{proof}  We claim that $E$ satisfies the condition $(K)$ in \cite{MR538552}, that is 
$$|I|\lesssim\sup_{z\in I}dist(z,E)\,,$$
for all arcs $I\subset \T$.  If we assume the claim, then by Theorem 4 in \cite{MR538552} there exists an outer function $w_{1/2}$ in $\D$ such that $$|w_{1/2}(z)|\sim dist(z,E)^{1/2}\,,\quad |w'_{1/2}(z)|\lesssim dist(z,E)^{-1/2}\,,\quad z\in \D\,.$$ 
If we set $g_1=w_{1/2}^2$  then clearly,
$$(1-|z|)\le dist(z,E)\sim |g_1(z)|\,,\quad |g_1'(z)|=2|w'_{1/2}w_{1/2}(z)|\lesssim 1\,, \quad z\in \D\,,$$
i.e. $g_1$ is Lipschitz, vanishes on $E$ and satisfies \eqref{lowbg}. Since $1\notin E$ it follows that $g(z)=(1-z)g_1(z)$ has the properties required in the statement. \\
To verify the claim,  note first that since $\phi^{-1}$ is analytic and one-to-one in a neighborhood of $E_1$, it will suffice to verify the condition $(K)$ for $E_1$ and all intervals $I\subset\mathbb R$. To this end, we use the obvious   inequality
\begin{equation}\label{tau} \tau =\sup_{j\ge 1}\frac{\sum_{m>j}(1-\lambda_m)\lambda_0\ldots\lambda_{m-1}}{(1-\lambda_j)\lambda_0\ldots\lambda_{j-1}}<\frac1{2}\,.\end{equation}   
Indeed, 
$$\frac{\sum_{m>j}(1-\lambda_m)\lambda_0\ldots\lambda_{m-1}}{(1-\lambda_j)\lambda_0\ldots\lambda_{j-1}}<
\frac{\lambda_j}{1-\lambda_j}\left(1+\sum_{m>j+1}2^{-m}\right)<\frac{1}{3}(1+2^{-2})\,.$$
In particular, \eqref{tau} shows that if $x_\alpha<x_\beta$ then there exists $j\ge 1$ such that $\beta_j-\alpha_j=1$, and $\alpha_m=\beta_m$ for $m< j$.  Moreover, in this case  we have that 
\begin{equation}\label{Kstep} \left|\frac{x_\alpha+x_\beta}{2}-x_{\alpha'}\right|>k(x_\beta-x_\alpha)\,,
\end{equation}
for some $k>0$ independent of $\alpha,\alpha',\beta$. To see this note that the inequality holds with $k=\frac1{2}$,  when $x_{\alpha'}$ lies outside $(x_\alpha,x_\beta)$. When $x_{\alpha'} $ lies inside this interval, with $j$  given above we have by \eqref{tau}
\begin{align*}\left|\frac{x_\alpha+x_\beta}{2}-x_{\alpha'}\right|&>\frac1{2}(1-\lambda_j)\lambda_0\ldots\lambda_{j-1}-
\sum_{m>j}(1-\lambda_m)\lambda_0\ldots\lambda_{m-1}\\&>(\frac1{2}-\tau)(1-\lambda_j)\lambda_0\ldots\lambda_{j-1}\\&>
\frac{\frac1{2}-\tau}{1+\tau}(x_\beta-x_\alpha)\,.\end{align*}
Finally, \eqref{Kstep} immediately implies $(K)$. If $(a,b)$ is any interval and $$dist(a,E_1),dist(b,E_1)>\frac1{3}(b-a)\,,$$
then $(K)$ holds with constant $\frac1{3}$. If 
$$dist(a,E_1),dist(b,E_1)<\frac1{3}(b-a)\,,$$
we can find $x_\alpha,x_\beta\in E_1$ such that
$$|x_\alpha-a|\,,|x_\beta-b|<\frac1{3}(b-a),$$
and then $\frac{x_\alpha+x_\beta}{2}\in (a,b)$, and $x_\beta-x_\alpha>\frac1{3}(b-a)$, so that the result follows from above.

\end{proof} 

\begin{Pf} {\it Theorem \ref{t.conjfails}.}
Assume the contrary. Fix  a function  $g$  as in Lemma \ref{dynkin}, this function clearly belongs to $ L_a^2$, and consider the space $X_g$ consisting of functions $f\in L_a^2$ with $$\|f\|=\|fg\|_\infty+\gamma(f)<\infty\,.$$
It is obviously a Banach space. By Lemma \ref{ourex} (i) we have $b_{f,g}<\infty$
for all $f\in X_g$, by assumption this implies that $T_fT_g^*$ is bounded on $L_a^2$ whenever $f\in X_g$, and  finally, by
 Lemma \ref{ourex} (ii) we obtain that $\delta(f)<\infty,~f\in X_g$. Since the space of functions $u\in L_a^2$ with $\delta(u)<\infty$ and norm given by $u\to\delta(u)$ is at its turn a Banach space, we can apply the colsed graph theorem to conclude that there exists $c>0$ such that \begin{equation}\label{contrad} \delta(f)\le c(\|fg\|_\infty+\gamma(f))\,,
 \end{equation}
 for all $f\in X_g$. We will show that this leads to a contradiction.\\
Recall that $\phi(z)=i\frac{1+z}{1-z}$ is the conformal map from the unit disc onto  the upper half-plane. With the notations preceding Lemma \ref{dynkin}, D. Stegenga (\cite{MR0550655}, p. 136) has constructed a sequence $(f_n)$ in $L_a^2$ of the form 
$$f_n(z)=2^{-n/2}p_n\sum_{k=1}^{2^n} \frac{1}{(\phi(z)-z_{nk})^2}\phi'(z)\,,$$
where $p_n=\lambda_0\ldots\lambda_n$, $\text{Im}z_{nk}<0$, with 
\begin{equation}\label{zenkay} dist(z_{nk},E_1) =-\text{Im}z_{nk}\sim p_n\end{equation} such that
\begin{equation}\label{stegenga1}\limsup_{n\to\infty}\gamma(f_n)<\infty\,,\end{equation}  
\begin{equation}\label{stegenga2}\lim_{n\to\infty}\delta(f_n)=\infty\,,\end{equation}
\begin{equation}\label{stegenga3}\sup_{z\in \D\atop n\in \N}2^{-n/2}p_n\sum_{k=1}^{2^n} \frac{1}{|\phi(z)-z_{nk}|}<\infty\,.\end{equation}
A simple calculation gives 
$$\frac{\phi'(z)}{\phi(z)-z_{nk}}=\frac{2i}{(z_{nk}+i)(1-z)(z-\phi^{-1}(z_{nk}))}\,,$$
and by \eqref{zenkay} there exist points $\zeta_{nk}\in E$ with $$|\phi^{-1}(z_{nk})-\zeta_{nk}|\sim |\phi^{-1}(z_{nk})|-1=dist(\phi^{-1}(z_{nk}),\T)\,.$$
Since $g$ has the properties in  Lemma \ref{dynkin},that is, it is Lipschitz and vanishes at $1,~\zeta_{nk}$,  it follows immediately that 
$$\left|\frac{g(z)\phi'(z)}{\phi(z)-z_{nk}}\right|\le C\,,$$
for some absolute constant $C>0$, all $k,n\in \N$ with $1\le k\le 2^n$, and all $z\in \D$. From \eqref{stegenga3} we have that $f_ng$ are uniformly bounded in $H^\infty$. Thus  by \eqref{stegenga1} we have that $(f_n)$ is a bounded sequence in $X_g$, hence \eqref{stegenga2} and \eqref{contrad} yield a contradiction which proves the theorem. 
\end{Pf}


\section{The class $B_\infty$ and sharp estimates in terms of the B\'ekoll\'e constant}

\label{sec:appl}

In our last section we include an application of the two weight result for the maximal Bergman projection, namely we obtain sharp B\'ekoll\'e estimates by establishing sharp estimates for the testing conditions \eqref{e.testpbeta} and \eqref{e.testpbetad}. We provide sharper estimates than the ones discussed by Pott and Reguera in \cite{1210.1108}.

\subsection{The class $B_\infty$}
Following B\'ekoll\'e and Bonami \cite{MR497663}, we say that a weight, i.e., a measurable positive function $w$,  
belongs to the class $B_p$ for $1<p<\infty$, if and only if
\begin{equation}
\label{e.bekolle}
B_{p}(w):=\sup_{\substack{I \text{ interval}\\ I\subset \T}}\left(\frac{1}{|Q_{I}|}\int_{Q_{I}}wdA\right) \left(\frac{1}{|Q_{I}|}\int_{Q_{I}}w^{1-p'}dA\right)^{p-1}<\infty
\end{equation}

\begin{definition}
We say that a  weight $w$ belongs to the class $B_{\infty}$, if and only if
\begin{equation}
\label{e.binfinity}
B_{\infty}(w):= \sup_{\substack{I \text{ interval}\\ I\subset \T}}\frac{1}{w(Q_{I})}\int_{Q_{I}} M(w1_{Q_{I}})  < \infty,
\end{equation}
where $M$ stands for the Hardy-Littlewood maximal function over Carleson cubes. 
\end{definition}
This definition of $B_{\infty}$ is motivated by the Muckenhoupt version  $A_{\infty}$ described by Wilson in \cites{MR972707, MR883661, MR2359017}. 
This $A_\infty$ definition appears in the recent works of Lerner \cite{MR2770437}, Hyt\"onen and P\'erez \cite{1103.5562} and Hyt\"onen and Lacey \cite{1106.4797} among others,
where it is used to find sharp estimates in terms of the Muckenhoupt $A_{p}$ and $A_{\infty}$ constants.

In particular,  $B_\infty$ contains any of the classes $B_p$:
\begin{proposition}
Let $w$ be a weight and $1 < p < \infty$. Then
$$
      B_\infty(w) \le B_p(w)^{\frac{1}{p'-1}}.
$$
\end{proposition}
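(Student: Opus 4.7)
The plan is to combine Hölder's inequality with a Sawyer-style testing estimate for the dyadic maximal operator on Carleson cubes, the latter following from a stopping-time argument and the $B_p$ condition applied one cube at a time.

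Fix a Carleson cube $Q = Q_I$ and write $\sigma := w^{1-p'}$. The $B_p$ condition $\langle w\rangle_R \langle \sigma\rangle_R^{p-1} \le B_p(w)$ is equivalent to $\langle w\rangle_R^{p'-1} \langle \sigma \rangle_R \le B_p(w)^{p'-1}$, so multiplication by $\langle w\rangle_R |R|$ gives the useful single-cube bound
\[
\langle w\rangle_R^{\,p'}\,\sigma(R) \le B_p(w)^{p'-1}\,w(R), \qquad (\ast)
\]
valid for any Carleson cube $R \subseteq Q$.

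Using the pointwise identity $w^{1/p}\sigma^{1/p'} = 1$ and Hölder's inequality with exponents $p'$ and $p$,
\[
\int_Q M(w 1_Q)\,dA
 \;=\; \int_Q M(w 1_Q)\, w^{1/p}\sigma^{1/p'}\,dA
 \;\le\; \Bigl(\int_Q M(w 1_Q)^{\,p'}\sigma\,dA\Bigr)^{1/p'} w(Q)^{1/p}.
\]
Since $p/p' = p-1$, it will suffice to establish the testing-type inequality
\[
\int_Q M(w 1_Q)^{\,p'}\sigma\,dA \;\le\; C\, B_p(w)^{\,p}\,w(Q). \qquad (\dagger)
\]
Combining $(\dagger)$ with the above Hölder bound yields $\int_Q M(w1_Q)\,dA \le C B_p(w)^{p-1} w(Q)$, and taking the supremum over $Q$ is precisely $B_\infty(w) \le C B_p(w)^{p-1}$.

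For $(\dagger)$, first pass from the Hardy--Littlewood maximal function to its dyadic counterpart $M^\beta$ adapted to the Carleson grids $\mathcal D^\beta$ of Section 2 by the same kernel comparison as in Proposition \ref{p.pointwise}; this costs only a fixed multiplicative constant. Apply a principal-cube (stopping-time) decomposition $\mathcal S$ inside $Q$ where one iteratively selects the maximal Carleson sub-cubes on which $\langle w \rangle$ at least doubles. This produces the pointwise estimate $M^\beta(w 1_Q) \lesssim \langle w \rangle_{\pi(\cdot)}$ (with $\pi(x)$ the smallest stopping cube containing $x$) and the sparsity $|E(S)| \ge \tfrac12 |S|$, where $E(S) := S \setminus \bigcup_{S' \in \mathrm{ch}(S)} S'$. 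Summing $(\ast)$ over $\mathcal{S}$ and upgrading Lebesgue-packing of the sparse family to $w$-packing via a second invocation of $B_p$ then yields $(\dagger)$.

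The main obstacle is this final step: tracking the $B_p(w)$-dependence through the stopping-time summation precisely enough to recover exponent $p$ in $(\dagger)$, rather than a larger power. Once this careful packing estimate is in place, everything else is routine bookkeeping in Hölder's inequality.
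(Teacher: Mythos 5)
Your opening reduction is exactly the paper's: write $1 = w^{1/p}\sigma^{1/p'}$ inside the integral, apply H\"older with exponents $p$ and $p'$, and reduce the claim to the testing bound
\[
\int_{Q_I} M(w 1_{Q_I})^{p'}\,\sigma\,dA \;\lesssim\; B_p(w)^{p}\,w(Q_I).
\]
Since $\|w1_{Q_I}\|_{L^{p'}(\sigma)}^{p'}=w(Q_I)$, this testing bound is the same as the one-weight operator bound $\|M\|_{L^{p'}(\sigma)\to L^{p'}(\sigma)}\lesssim B_{p'}(\sigma)^{1/(p'-1)}=B_p(w)^{p-1}$, which is Buckley's sharp maximal-function estimate in the B\'ekoll\'e setting. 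At this point the paper simply invokes it, citing estimate (4.7) of \cite{1210.1108}, and is done in three lines.

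Where your write-up falls short is precisely at this step: you propose to reprove the Buckley bound from scratch via a stopping-time decomposition and a ``second invocation of $B_p$ to upgrade Lebesgue-packing to $w$-packing,'' but you explicitly flag that you have not carried out the packing estimate and are unsure it delivers exponent $p$ rather than something worse. That is a genuine gap, not a technicality --- it is the entire content of the sharp bound you are trying to use, and without it your argument does not close. The exponent bookkeeping you do have (the $B_p$ identity $(\ast)$, the H\"older split, and $p/p'=p-1$) is all correct and does line up with the target $B_p(w)^{p-1}$ once $(\dagger)$ is granted; the fix is either to complete the Sawyer/Buckley stopping-time argument in full, or, as the paper does, to cite the known sharp one-weight bound for the Carleson-box maximal operator and conclude directly.
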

\proof
Let $w \in B_p$ and recall that $B_p(w) = B_{p'}(w')$, where $w' = w^{1-p'}$. Hence for any Carleson cube $Q_I$,
\begin{eqnarray*}
\int_{Q_I} M(1_{Q_I}w) & \le &     \left( \int_{Q_I} M(1_{Q_I}w)^{p'} w'   \right)^{1/p'}                 \left( \int_{Q_I}     w     \right)^\frac{1}{p}   \\
&\le &     \| M(w \cdot) \|_{L^{p'}(w) \to L^{p'}(w')}  w(Q_I)^{1/p'} w(Q_I)^{1/p} \\
& =   & \|  M \|_{L^{p'}(w') \to L^{p'}(w')}  w(Q_I) \le B_p(w)^{\frac{1}{p'-1}} w(Q_I),
\end{eqnarray*}
where we have used the estimate (4.7) from \cite{1210.1108} for the maximal function in the last line.
\qed

\subsection{The sharp estimate}

The main result in this section is the following:

\begin{theorem}
\label{t.bekolle}
Let $w\in B_{2}$ be a Bekoll\'e weight with constant $B_{2}(w)$ and let $P^{+}$ be the positive Bergman projection. Then
\begin{equation}
\label{e.sharpbeko}
\norm P^{+}f. L^{2}(w).\leq C B_{2}(w)^{1/2}(B_{\infty}(w)^{1/2}+ B_{\infty}(w^{-1})^{1/2}) \norm f. L^{2}(w).,
\end{equation}
with $C$ independent of the weight $w$.
\end{theorem}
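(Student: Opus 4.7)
The overall plan is to combine the pointwise dyadic comparability of $P^+_B$ from Proposition~\ref{p.pointwise} with the two-weight testing characterization of Section~\ref{sec:pos}, and then to bound each testing constant sharply in terms of $B_2$ and $B_\infty$. Fix $\sigma=w^{-1}$. By \eqref{e.dyadicmp} it suffices to bound each dyadic model $P^\beta$ on $L^2(w)$ with the required constant, and by Theorem~\ref{c.twodB} (in the Carleson-cube version used in Corollary~\ref{c.twmBp}) together with Remark~\ref{r.twdpos}, this boundedness follows from the two inside testing conditions
$$\int\bigl(T^\beta_{\mathrm{in},Q_I}(\sigma 1_{Q_I})\bigr)^2 w\le C_0\,\sigma(Q_I),\qquad \int\bigl(T^\beta_{\mathrm{in},Q_I}(w 1_{Q_I})\bigr)^2\sigma\le C_0^*\, w(Q_I),$$
uniformly in $I\subset\T$, with the operator norm controlled by $\sqrt{C_0}+\sqrt{C_0^*}$. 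I therefore aim to prove $C_0\lesssim B_2(w)B_\infty(\sigma)$ and, by symmetry, $C_0^*\lesssim B_2(w)B_\infty(w)$, which combine to give \eqref{e.sharpbeko}.

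For the first testing inequality I expand the square and use that for dyadic Carleson cubes $Q_{J_1}\cap Q_{J_2}$ is nonempty only for nested pairs; by symmetry,
$$\int\bigl(T^\beta_{\mathrm{in},Q_I}(\sigma 1_{Q_I})\bigr)^2w \lesssim \sum_{J_1\subseteq J_2\subseteq I}\mathbb{E}_{Q_{J_1}}(\sigma)\,\mathbb{E}_{Q_{J_2}}(\sigma)\,w(Q_{J_1}).$$
The B\'ekoll\'e condition gives $\mathbb{E}_{Q_{J_1}}(\sigma)\,w(Q_{J_1})\le B_2(w)|Q_{J_1}|$, and the elementary geometric-series identity $\sum_{J_1\subseteq J_2}|Q_{J_1}|\sim|Q_{J_2}|$ collapses the inner sum, reducing the estimate of $C_0$ to the Carleson-type bound
\begin{equation}\label{e.plancarlsum}
\sum_{J\subseteq I}\sigma(Q_J)\lesssim B_\infty(\sigma)\,\sigma(Q_I).
\end{equation}

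To prove \eqref{e.plancarlsum} I run a Corona stopping-time construction for $\sigma$ on the dyadic subintervals of $I$, producing a sparse family $\mathcal F$ whose averages $\mathbb{E}_{Q_K}(\sigma)$ at least double between consecutive generations. Writing $\pi(J)$ for the smallest $K\in\mathcal F$ with $J\subseteq K$, the bounded-averages property on each Corona segment gives
$$\sum_{J\subseteq I}\sigma(Q_J)\le 2\sum_{K\in\mathcal F}\mathbb{E}_{Q_K}(\sigma)\sum_{\pi(J)=K}|Q_J|.$$
A direct geometric counting---integrating the chain length $N_K(x)=\#\{J:\pi(J)=K,\,x\in Q_J\}$ against $E_K$ and against each $\mathcal F$-child $K'$ of $K$, and using the elementary inequality $|K'|^2\log(|K|/|K'|)\le|K|\cdot|K'|$ together with $\sum_{K'}|K'|\le|K|$---yields $\sum_{\pi(J)=K}|Q_J|\lesssim|Q_K|$. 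Finally, sparseness $|E_K|\gtrsim|Q_K|$ and the identification of $M^d(\sigma 1_{Q_I})$ with $\mathbb{E}_{Q_K}(\sigma)$ on the own part $E_K$ give $\sum_{K\in\mathcal F}\sigma(Q_K)\lesssim\int_{Q_I}M^d(\sigma 1_{Q_I})\le B_\infty(\sigma)\sigma(Q_I)$ directly from the definition \eqref{e.binfinity}. The dual testing bound $C_0^*\lesssim B_2(w)B_\infty(w)$ follows from the same argument with the roles of $w$ and $\sigma$ interchanged.

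The main obstacle is \eqref{e.plancarlsum}: the naive pointwise estimate $\mathbb{E}_{Q_J}(\sigma)\le M^d(\sigma 1_{Q_I})$ introduces a spurious logarithmic factor, because each $x\in Q_I$ lies in $\sim\log(|I|/(1-|x|))$ Carleson boxes $Q_J$ with $J\subseteq I$. Absorbing this logarithm is precisely the role of the sparse Corona decomposition together with the geometric-series counting $\sum_{\pi(J)=K}|Q_J|\lesssim|Q_K|$, which is the most delicate combinatorial step of the proof.
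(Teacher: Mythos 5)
Your overall framework matches the paper's: reduce via Proposition~\ref{p.pointwise} and Theorem~\ref{c.twodB} to the two inside-testing conditions, expand the square, apply $B_2(w)$ to the inner index and collapse the resulting geometric series $\sum_{J_1\subseteq J_2}|Q_{J_1}|\lesssim|Q_{J_2}|$, and arrive at the Carleson-type sum $\sum_{J\subseteq I}\sigma(Q_J)\lesssim B_\infty(\sigma)\sigma(Q_I)$, giving $C_0\lesssim B_2(w)B_\infty(\sigma)$ and by symmetry $C_0^*\lesssim B_2(w)B_\infty(w)$. Where you diverge from the paper is in the proof of the Carleson-sum inequality (the paper's Lemma~\ref{l.Ainfty}). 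You run a Corona stopping-time construction for $\sigma$, verify Lebesgue-sparseness of the stopping boxes, and control the sum over each Corona block by a counting argument. The paper instead uses a much shorter disjointness argument: writing $T_K$ for the top half of $Q_K$ (the part of $Q_K$ not covered by the Carleson boxes of the two dyadic children of $K$), one has $|Q_K|\le 2|T_K|$, the $T_K$ with $K\subseteq I$ are pairwise disjoint and tile $Q_I$, and $M(\sigma 1_{Q_I})\ge\sigma(Q_K)/|Q_K|$ pointwise on $T_K$; hence $\sum_{K\subseteq I}\sigma(Q_K)\le 2\sum_K\int_{T_K}M(\sigma 1_{Q_I})\le 2B_\infty(\sigma)\sigma(Q_I)$, with no stopping time at all. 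Your approach works (the doubling of $\sigma$-averages across Corona generations does give $\sum_{K'}|Q_{K'}|<\tfrac12|Q_K|$, hence Lebesgue-sparseness), but it is considerably heavier machinery than what is needed. Also, the "main obstacle" you identify — absorbing the logarithmic multiplicity of the Carleson boxes containing a fixed point — is in fact already handled by the elementary bound $\sum_{J\subseteq K}|Q_J|\lesssim|Q_K|$, which is a plain geometric series; the elaborate chain-length counting and the inequality $|K'|^2\log(|K|/|K'|)\le|K||K'|$ are unnecessary for that step, since $\sum_{\pi(J)=K}|Q_J|\le\sum_{J\subseteq K}|Q_J|\lesssim|Q_K|$ holds outright. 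So the Corona construction is buying you nothing here beyond what the top-half disjointness gives directly.
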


\begin{corollary}
The same result holds for the Bergman projection $P_B$.
\end{corollary}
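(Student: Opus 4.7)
The plan is to pass through the dyadic model of Section~\ref{sec:dyadic} and then reduce the problem to sharp estimates on testing constants. By Proposition~\ref{p.pointwise}, $P_B^+ f \le C \sum_{\beta \in \{0, 1/3\}} P^\beta f$ pointwise, so it suffices to establish \eqref{e.sharpbeko} with $P_B^+$ replaced by each dyadic operator $P^\beta$, uniformly in $\beta$. Writing the one-weight inequality in the two-weight form (with $\sigma = w^{-1}$), Theorem~\ref{c.twodB} together with Remark~\ref{r.twdpos} reduces the problem to the two testing bounds
\begin{align*}
 \int_\D w \Bigl(\sum_{I \subset I_0}\tfrac{w^{-1}(Q_I)}{|I|^2}\, 1_{Q_I}\Bigr)^2 dA & \le C_0\, w^{-1}(Q_{I_0}),\\
 \int_\D w^{-1} \Bigl(\sum_{I \subset I_0}\tfrac{w(Q_I)}{|I|^2}\, 1_{Q_I}\Bigr)^2 dA & \le C_0^{\ast}\, w(Q_{I_0}),
\end{align*}
holding for all $I_0 \in \mathcal D^\beta$. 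It suffices to show $C_0 \lesssim B_2(w)\, B_\infty(w^{-1})$ and, symmetrically, $C_0^{\ast} \lesssim B_2(w)\, B_\infty(w)$, since then Theorem~\ref{c.twodB} yields $\|P^\beta\|_{L^2(w)\to L^2(w)}^2 \lesssim C_0 + C_0^{\ast}$ and the desired inequality follows.

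For the first testing inequality, I would run the corona decomposition of Definition~\ref{d.corona} on the cubes below $Q_{I_0}$ with respect to Lebesgue measure and the function $w^{-1}$. This produces a stopping tree $\mathcal F$ along which the averages $\langle w^{-1}\rangle_{Q_K}$ at least quadruple from one generation to the next, and the stopping rule forces the Lebesgue-sparseness $|E_K| \ge \tfrac34 |Q_K|$, where $E_K = Q_K \setminus \bigcup_{K' \in \mathcal L(K)} Q_{K'}$. The key step is the Carleson-form bound
\begin{equation*}
 \int_\D w \Bigl(\sum_{I \subset I_0} \langle w^{-1}\rangle_{Q_I}\, 1_{Q_I}\Bigr)^2 dA \lesssim \sum_{K \in \mathcal F} \langle w^{-1}\rangle_{Q_K}^2\, w(Q_K),
\end{equation*}
which is obtained by running the $(I)+(II)$ stopping-time split from the proof of Theorem~\ref{t.maind}, adapted to the $L^2(w)$ inner product and exploiting that, within each corona, $\langle w^{-1}\rangle_{Q_I}$ is comparable to $\langle w^{-1}\rangle_{Q_K}$.

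Once this is in hand, the $B_2$ condition gives $\langle w^{-1}\rangle_{Q_K}\, w(Q_K) \le B_2(w)\, |Q_K|$, so
\begin{equation*}
 \sum_{K \in \mathcal F} \langle w^{-1}\rangle_{Q_K}^2\, w(Q_K) \le B_2(w) \sum_{K \in \mathcal F} w^{-1}(Q_K),
\end{equation*}
and $B_\infty(w^{-1})$ controls the remaining sum: the pointwise bound $M(w^{-1} 1_{Q_{I_0}})(z) \ge \langle w^{-1}\rangle_{Q_K}$ for $z \in E_K$, combined with the sparseness, yields
\begin{equation*}
 \sum_{K \in \mathcal F} w^{-1}(Q_K) \le \tfrac43 \sum_{K \in \mathcal F} \langle w^{-1}\rangle_{Q_K}\, |E_K| \le \tfrac43 \int_{Q_{I_0}} M(w^{-1} 1_{Q_{I_0}})\, dA \le \tfrac43\, B_\infty(w^{-1})\, w^{-1}(Q_{I_0})
\end{equation*}
by \eqref{e.binfinity}. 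The dual testing constant $C_0^{\ast}$ is handled by the symmetric argument with $w$ and $w^{-1}$ exchanged, using $B_2(w) = B_2(w^{-1})$. The main technical obstacle is the intermediate Carleson-form reduction: non-principal cubes within each corona may form long chains of comparable averages, so the passage from the iterated double sum to $\sum_K \langle w^{-1}\rangle_{Q_K}^2\, w(Q_K)$ requires a careful tree-based Cauchy--Schwarz estimate in the spirit of the $(II)$ bound in the proof of Theorem~\ref{t.maind}. The corollary for the cancellative Bergman projection $P_B$ follows from the pointwise domination $|P_B f| \le P_B^+(|f|)$.
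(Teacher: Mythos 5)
The corollary is a one-line consequence of Theorem~\ref{t.bekolle}: since $|P_B u(z)| = \bigl|\int_\D \frac{u(\zeta)}{(1-\bar\zeta z)^2}\,dA(\zeta)\bigr| \le \int_\D \frac{|u(\zeta)|}{|1-\bar\zeta z|^2}\,dA(\zeta) = P_B^+(|u|)(z)$ pointwise and $\bigl\| |u| \bigr\|_{L^2(w)} = \|u\|_{L^2(w)}$, the bound \eqref{e.sharpbeko} for $P_B^+$ transfers immediately to $P_B$. You do state exactly this observation in your final sentence, so the essential idea is present and correct. However, almost the entire body of your proposal (dyadic model, corona decomposition, two testing inequalities, $B_2$ and $B_\infty$ bookkeeping) is a re-derivation of Theorem~\ref{t.bekolle} itself, which the corollary is supposed to be deduced \emph{from}, not re-proved. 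That work should be cut.

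There is also a point to flag in the re-derivation you sketch, in case you intended it as an independent proof of the estimate: your route differs from the paper's proof of Theorem~\ref{t.bekolle}. The paper expands $\|P^\beta_{I,in}(w^{-1}1_{Q_I})\|_{L^2(w)}^2$ directly into a diagonal sum $D$ over \emph{all} cubes $K\subset I$ plus an off-diagonal sum $OD$, controls $OD$ by pulling out $B_2(w)$ and using $\sum_{K\subset K'}|K|^2 \lesssim |K'|^2$, and then applies Lemma~\ref{l.Ainfty} directly to $\sum_{K\subset I}w^{-1}(Q_K)$. No corona decomposition or sparseness enters. Your version instead introduces a stopping tree $\mathcal F$ with Lebesgue-sparseness and passes through an intermediate Carleson-form reduction $\int_\D w\bigl(\sum_{I\subset I_0}\langle w^{-1}\rangle_{Q_I}1_{Q_I}\bigr)^2\,dA \lesssim \sum_{K\in\mathcal F}\langle w^{-1}\rangle_{Q_K}^2\,w(Q_K)$. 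You yourself note that this step is a ``main technical obstacle,'' and rightly so: within a single corona, the cubes $I\in\mathcal D(K)$ carry comparable averages $\langle w^{-1}\rangle_{Q_I}\simeq\langle w^{-1}\rangle_{Q_K}$, but $\sum_{I\in\mathcal D(K)}1_{Q_I}$ is not pointwise bounded, so pulling out $\langle w^{-1}\rangle_{Q_K}^2$ leaves $\int_{Q_K}w\bigl(\sum_{I\in\mathcal D(K)}1_{Q_I}\bigr)^2dA$, which is not controlled by $w(Q_K)$ without further input. The paper's direct diagonal/off-diagonal expansion avoids this issue entirely, and would be the cleaner way to carry out your plan if you wanted a self-contained re-proof. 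But for the corollary as stated, none of this is needed --- the pointwise domination alone suffices.
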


The method of proof will be as follows. We will consider the dyadic operators $P^{\beta}$ and use Theorem \ref{c.twodB} to obtain 
the sharp bound in the B\'ekoll\'e constants, which will be independent of the choice of the grid. An averaging operation will now yield the desired result.

The following lemma is known in the case of Muckenhoupt weights if the collection of cubes appearing in the sum is sparse, this can be found in \cite{1103.5562}. In our case, the lemma reads as follows. 

\begin{lemma}\label{l.Ainfty}
Let $\sigma\in B_{\infty}$, then
\begin{equation}
\label{e.Ainfty}
\sum_{K:\, K \subset I}\sigma(Q_{K})\leq 2 B_{\infty}(\sigma) \sigma(Q_{I}).
\end{equation}
\end{lemma}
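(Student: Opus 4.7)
The strategy is a stopping-time (principal interval) decomposition, reducing the full non-sparse sum on the left to a sparse-type sum that can be controlled by the $B_\infty$ condition, much as in the argument of Hyt\"onen--P\'erez cited in the excerpt. The key new geometric ingredient is that in the disc the Carleson boxes satisfy $|Q_K| \sim |K|^2$, so the naive geometric sum over all dyadic scales already converges.

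\textbf{Step 1 (baseline geometric sum).} A direct computation in any dyadic grid $\mathcal D^\beta$ gives, for every interval $J$,
$$\sum_{K\subset J,\, K\in\mathcal D^\beta} |Q_K| \;=\; \sum_{j\ge 0} 2^j\cdot c\,(2^{-j}|J|)^2 \;=\; 2c|J|^2 \;=\; 2|Q_J|,$$
where $c$ is the proportionality constant in $|Q_J|=c|J|^2$. This already settles the case $\sigma\equiv 1$ (since $B_\infty(1)=1$) with the sharp constant $2$, and will be used as a black box in Step 3.

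\textbf{Step 2 (principal intervals).} Build a family $\mathcal F$ of stopping intervals recursively: put $I\in\mathcal F$, and for each $F\in\mathcal F$ declare as $\mathcal F$-children the maximal dyadic intervals $F'\subsetneq F$ with
$$\frac{\sigma(Q_{F'})}{|Q_{F'}|} \;>\; 2\,\frac{\sigma(Q_F)}{|Q_F|}.$$
For each dyadic $K\subset I$ let $\pi(K)$ be the minimal element of $\mathcal F$ containing $K$. By maximality, $K$ with $\pi(K)=F$ satisfies $\sigma(Q_K)/|Q_K|\le 2\sigma(Q_F)/|Q_F|$.

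\textbf{Step 3 (local bound).} Grouping the sum according to $\pi$ and applying Step 1 to $J=F$,
$$\sum_{K:\,\pi(K)=F}\sigma(Q_K) \;\le\; 2\,\frac{\sigma(Q_F)}{|Q_F|} \sum_{K\subset F}|Q_K| \;\le\; 4\,\sigma(Q_F).$$

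\textbf{Step 4 (sparseness and $B_\infty$).} Set $E_F:=Q_F\setminus\bigcup_{F'} Q_{F'}$, union over $\mathcal F$-children of $F$. Since each child has $|Q_{F'}|<\tfrac{|Q_F|}{2\sigma(Q_F)}\sigma(Q_{F'})$ and $\sum_{F'}\sigma(Q_{F'})\le\sigma(Q_F)$, one gets $\sum_{F'}|Q_{F'}|\le|Q_F|/2$, so $|E_F|\ge|Q_F|/2$. The $E_F$ are pairwise disjoint. For $z\in Q_F\subset Q_I$ one has $M(\sigma 1_{Q_I})(z)\ge \sigma(Q_F)/|Q_F|$, whence
$$\sigma(Q_F) \;\le\; 2|E_F|\,\frac{\sigma(Q_F)}{|Q_F|} \;\le\; 2\int_{E_F} M(\sigma 1_{Q_I}).$$
Summing over $F\in\mathcal F$ and using the $B_\infty$ condition gives
$$\sum_{F\in\mathcal F}\sigma(Q_F) \;\le\; 2\int_{Q_I} M(\sigma 1_{Q_I}) \;\le\; 2 B_\infty(\sigma)\,\sigma(Q_I),$$
and combining with Step 3 yields $\sum_{K\subset I}\sigma(Q_K)\lesssim B_\infty(\sigma)\,\sigma(Q_I)$.

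\textbf{Main obstacle and remarks.} The conceptual difficulty is that the sum on the left is \emph{not} over a sparse collection; in $\mathbb R^n$ such a sum diverges scale by scale, and the sparse reduction is unavoidable. Here it converges geometrically thanks to $|Q_K|\sim|K|^2$, which is what lets Step 1 do the scale-by-scale bookkeeping and Step 4 borrow the Muckenhoupt-type argument wholesale. The constant obtained above is $8 B_\infty(\sigma)$ rather than the advertised $2 B_\infty(\sigma)$; the gap comes from the crude factor $4$ in Step 3, and the sharp constant $2$ should follow either by tuning the stopping threshold (replacing $2$ in Step 2 by a parameter $\lambda>1$ and optimising) or by not double-counting $K=F$ between the local and global bounds.
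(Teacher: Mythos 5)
Your proof is correct (modulo the constant $8$ rather than $2$, which you flag yourself), but it is a genuinely different and substantially longer argument than the paper's, and the ``main obstacle'' you identify is actually absent in this setting. In $\mathbb{R}^n$ the family of all dyadic cubes is not sparse and a principal-interval decomposition of the kind you run in Steps~2--4 is unavoidable, but in the Bergman setting the full family of Carleson boxes $\{Q_K : K\subset I,\ K\in\mathcal D^\beta\}$ is \emph{already} sparse: writing $T_K:= Q_K\setminus(Q_{K_1}\cup Q_{K_2})$ for the Whitney top of $Q_K$ (with $K_1,K_2$ the dyadic children of $K$), the $T_K$ are pairwise disjoint, tile $Q_I$, and satisfy $|T_K|\simeq\tfrac12|Q_K|$. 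Your Step~1 identity $\sum_{K\subset J}|Q_K|\simeq 2|Q_J|$ is precisely the signature of this built-in sparseness. The paper therefore dispenses with the stopping time entirely and argues in four lines: since $M(\sigma 1_{Q_I})(z)\ge \sigma(Q_K)/|Q_K|$ for every $z\in T_K\subset Q_K$,
\begin{equation*}
\sum_{K\subset I}\sigma(Q_K)=\sum_{K\subset I}\frac{\sigma(Q_K)}{|Q_K|}\,|Q_K|\le 2\sum_{K\subset I}\frac{\sigma(Q_K)}{|Q_K|}\,|T_K|\le 2\sum_{K\subset I}\int_{T_K} M(\sigma 1_{Q_I})\,dm\le 2B_\infty(\sigma)\,\sigma(Q_I),
\end{equation*}
the last step using disjointness of the $T_K$ inside $Q_I$ together with the definition of $B_\infty$. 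Your Steps~2--4 re-derive, for the special family of stopping intervals, the sparseness that the Carleson-box geometry already supplies for free for \emph{all} $K\subset I$; that redundancy is exactly where your extra factor of $4$ in Step~3 comes from, and no tuning of the stopping threshold is needed once one notices the tops are disjoint.
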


\begin{proof}
\begin{eqnarray*}
\sum_{K:\, K \subset I}\sigma(Q_{K})&=& \sum_{K:\, K \subset I}\frac{\sigma(Q_{K})}{|Q_{K}|}|Q_{K}|\\
& \leq & 2 \sum_{K:\, K \subset I}\frac{\sigma(Q_{K})}{|Q_{K}|}|T_{K}|\\
& \leq & 2 \sum_{K:\, K \subset I}\int_{T_{K}}M(\sigma 1_{Q_{I}})dm\\
& \leq & 2 B_{\infty}(\sigma)\sigma(Q_{I})
\end{eqnarray*}

\end{proof}

We turn to proving the desired bound for the two testing conditions. 

\begin{proof}[Proof of Theorem \ref{t.bekolle}] We use Theorem \ref{c.twodB} for the weights $w$ and $\sigma=w' = w^{p'-1}$. We only
have to show the appropriate bounds for the test function conditions, and
we will only focus on one of the conditions, as the study of the other is analogous. In what follows, let $I\in \mathcal D^{\beta}$. We want to prove 
\begin{equation}
\label{e.testB}
\norm P^{\beta}_{I,in}(w^{-1} 1_{Q_{I}}).L^{2}(w).^{2}\lesssim B_{2}(w) B_{\infty}(w^{-1})w^{-1}(Q_{I}),
\end{equation}
where the implicit constant does not depend on the chosen grid $D^{\beta}$ or the weight $w$.

\begin{eqnarray*}
\norm P^{\beta}_{I,in}(w^{-1} 1_{Q_{I}}).L^{2}(w).^{2}& = & \int_{Q_I}\left| \sum_{K:\, K\subset I} \langle w^{-1} 1_{Q_{I}}, \frac{1_{Q_{K}}}{|K|} \rangle \frac{1_{Q_{K}}}{|K|}\right|^{2}wdA\\
 &= & \int_{Q_I} \sum_{K:\, K\subset I} \langle w^{-1} 1_{Q_{I}}, \frac{1_{Q_{K}}}{|K|} \rangle^{2} \frac{1_{Q_{K}}}{|K|^{2}}wdA\\
 & & + 
  2\int_{Q_I} \sum_{K':\, K'\subset I} \sum_{K:\, K\subset K'} \langle w^{-1} 1_{Q_{I}}, \frac{1_{Q_{K}}}{|K|} \rangle \langle w^{-1} 1_{Q_{I}}, \frac{1_{Q_{K'}}}{|K'|} \rangle \frac{1_{Q_{K}}}{|K||K'|}wdA\\
 & :=& D + 2OD,\\
 \end{eqnarray*} 

where the terminology for $D$ and $OD$ comes from the diagonal and the off-diagonal. Let us treat each term in turn.

\begin{eqnarray*}
D&=&  \sum_{K:\, K\subset I} \frac{w^{-1}(Q_{K})^{2}}{|K|^{2}}  \frac{w(Q_{K})}{|K|^{2}}\\
 & \leq & B_{2}(w)\sum_{K:\, K\subset I} w^{-1}(Q_{K})\\
 & \leq & B_{2}(w)B_{\infty}(w^{-1}),
\end{eqnarray*}
where the last inequality follows from lemma \ref{l.Ainfty}. The off-diagonal term is equally simple, 

\begin{eqnarray*}
OD&=&  \sum_{K':\, K'\subset I} \sum_{K: \, K\subset K'} \frac{w^{-1}(Q_{K'})}{|K'|} \frac{w^{-1}(Q_{K})}{|K|}  \frac{w(Q_{K})}{|K||K'|}\\
&=& \sum_{K':\, K'\subset I} w^{-1}(Q_{K'})\frac{1}{|K'|^{2}}\sum_{K: \, K\subset K'}  \frac{w^{-1}(Q_{K})}{|K|^{2}} \frac{w(Q_{K})}{|K|^{2}}|K^{2}|\\
 & \leq & B_{2}(w)\sum_{K':\, K'\subset I} w^{-1}(Q_{K'})\frac{1}{|K'|^{2}}\sum_{K: \, K\subset K'}|K^{2}|\\
 & \leq &C B_{2}(w)\sum_{K':\, K'\subset I} w^{-1}(Q_{K'})\\
 & \leq &2C B_{2}(w)B_{\infty}(w^{-1}),
\end{eqnarray*} 
where in the second line we have multiplied and divided by $|K|^{2}$ to use the Bekoll\'e constant. 

\end{proof}

\section{Acknowledgement}
We thank the Crafoord Foundation for the partial support of this research. We also thank Nicola Arcozzi for a helpful discussion on Stegenga's counterexample. Finally, we are deeply indebted to Konstantin Dyakonov, who brought to our attention the work of Dyn{\cprime}kin.

\begin{bibsection}
\begin{biblist}

\bib{Aleman20122359}{article}{
title = {The Bergman projection on vector-valued -spaces with operator-valued weights},
journal = {Journal of Functional Analysis},
volume = {262},
number = {5},
pages = {2359 - 2378},
year = {2012},
issn = {0022-1236},
url = {http://www.sciencedirect.com/science/article/pii/S0022123611004356},
author = {Alexandru Aleman and Olivia Constantin},
}

\bib{MR1949836}{article}{
   author={Arcozzi, Nicola},
   author={Rochberg, Richard},
   author={Sawyer, Eric},
   title={Carleson measures for analytic Besov spaces},
   journal={Rev. Mat. Iberoamericana},
   volume={18},
   date={2002},
   number={2},
   pages={443--510},
   issn={0213-2230},
}

\bib{MR667319}{article}{
   author={Bekoll{\'e}, David},
   title={In\'egalit\'e \`a poids pour le projecteur de Bergman dans la
   boule unit\'e de ${\bf C}^{n}$},
   language={French},
   journal={Studia Math.},
   volume={71},
   date={1981/82},
   number={3},
   pages={305--323},
}

\bib{MR497663}{article}{
   author={Bekoll{\'e}, David},
   author={Bonami, Aline},
   title={In\'egalit\'es \`a poids pour le noyau de Bergman},
   language={French, with English summary},
   journal={C. R. Acad. Sci. Paris S\'er. A-B},
   volume={286},
   date={1978},
   number={18},
   pages={A775--A778},
   issn={0151-0509},
}

\bib{MR1124164}{article}{
    AUTHOR = {Buckley, Stephen M.},
     TITLE = {Estimates for operator norms on weighted spaces and reverse
              {J}ensen inequalities},
   JOURNAL = {Trans. Amer. Math. Soc.},
    VOLUME = {340},
      YEAR = {1993},
    NUMBER = {1},
     PAGES = {253--272},
      ISSN = {0002-9947},
       URL = {http://dx.doi.org/10.2307/2154555},
}

\bib{MR951506}{article}{
   author={Christ, Michael},
   title={Weak type $(1,1)$ bounds for rough operators},
   journal={Ann. of Math. (2)},
   volume={128},
   date={1988},
   number={1},
   pages={19--42},
   issn={0003-486X},
}

\bib{cruz}{article}{
    AUTHOR = {Cruz-Uribe, David},
     TITLE = {The invertibility of the product of unbounded Toeplitz operators},
   JOURNAL = {Int.Eq.Op.Th.},
    VOLUME = {20},
      YEAR = {1994},
    NUMBER = {2},
     PAGES = {231--237},
}

\bib{MR2854179}{article}{
    AUTHOR = {Cruz-Uribe, David},
    author={ Martell, Jos{\'e} Mar{\'{\i}}a}, 
    author={         P{\'e}rez, Carlos},
     TITLE = {Sharp weighted estimates for classical operators},
  JOURNAL = {Advances in Mathematics},
    VOLUME = {229},
      YEAR = {2012},
    NUMBER = {1},
     PAGES = {408--441},
      ISSN = {0001-8708},
       URL = {http://dx.doi.org/10.1016/j.aim.2011.08.013},
}

\bib{MR2628851}{article}{
       author={Cruz-Uribe, David V.},
       author={Martell, Jos{\'e} Maria},
       author={P{\'e}rez, Carlos},
     TITLE = {Sharp weighted estimates for approximating dyadic operators},
   JOURNAL = {Electron. Res. Announc. Math. Sci.},
    VOLUME = {17},
      YEAR = {2010},
     PAGES = {12--19},
      ISSN = {1935-9179},
       URL = {http://dx.doi.org/10.3934/era.2010.17.12},
}

\bib{MR2797562}{book}{
   author={Cruz-Uribe, David V.},
   author={Martell, Jos{\'e} Maria},
   author={P{\'e}rez, Carlos},
   title={Weights, extrapolation and the theory of Rubio de Francia},
   series={Operator Theory: Advances and Applications},
   volume={215},
   publisher={Birkh\"auser/Springer Basel AG, Basel},
   date={2011},
   pages={xiv+280},
   isbn={978-3-0348-0071-6},
}

\bib{MR2140200}{article}{
    AUTHOR = {Dragi{\v{c}}evi{\'c}, Oliver},
    author= {Grafakos, Loukas},
    author= {Pereyra, Mar{\'{\i}}a Cristina},
    author={ Petermichl, Stefanie},
     TITLE = {Extrapolation and sharp norm estimates for classical operators
              on weighted {L}ebesgue spaces},
   JOURNAL = {Publ. Mat.},
    VOLUME = {49},
      YEAR = {2005},
    NUMBER = {1},
     PAGES = {73--91},
      ISSN = {0214-1493},
}

\bib{MR538552}{article}{
   author={Dyn{\cprime}kin, E. M.},
   title={Free interpolation sets for H\"older classes},
   language={Russian},
   journal={Mat. Sb. (N.S.)},
   volume={109(151)},
   date={1979},
   number={1},
   pages={107--128, 166},
   issn={0368-8666},
}

\bib{MR767633}{book}{
   author={Folland, Gerald B.},
   title={Real analysis},
   series={Pure and Applied Mathematics (New York)},
   note={Modern techniques and their applications;
   A Wiley-Interscience Publication},
   publisher={John Wiley \& Sons Inc.},
   place={New York},
   date={1984},
   pages={xiv+350},
}

\bib{MR807149}{book}{
    AUTHOR = {Garc{\'{\i}}a-Cuerva, Jos{\'e}},
    author= {Rubio de Francia, Jos{\'e} L.},
     TITLE = {Weighted norm inequalities and related topics},
    SERIES = {North-Holland Mathematics Studies},
    VOLUME = {116},
      NOTE = {Notas de Matem{\'a}tica [Mathematical Notes], 104},
 PUBLISHER = {North-Holland Publishing Co.},
   ADDRESS = {Amsterdam},
      YEAR = {1985},
     PAGES = {x+604},
      ISBN = {0-444-87804-1},
}

\bib{MR658065}{article}{
    AUTHOR = {Garnett, John B.},
    author={ Jones, Peter W.},
     TITLE = {B{MO} from dyadic {BMO}},
   JOURNAL = {Pacific J. Math.},
    VOLUME = {99},
      YEAR = {1982},
    NUMBER = {2},
     PAGES = {351--371},
      ISSN = {0030-8730},
       URL = {http://projecteuclid.org/getRecord?id=euclid.pjm/1102734020},
}

\bib{MR1758653}{book}{
   author={Hedenmalm, Haakan},
   author={Korenblum, Boris},
   author={Zhu, Kehe},
   title={Theory of Bergman spaces},
   series={Graduate Texts in Mathematics},
   volume={199},
   publisher={Springer-Verlag},
   place={New York},
   date={2000},
   pages={x+286},
}

  \bib{muckenhoupt}{article}{
    AUTHOR = {Hunt, R.},
    author= {Muckenhoupt, B.},
    author={Wheeden, R.},
     TITLE = {Weighted norm inequalities for the conjugate function and Hilbert transform},
 JOURNAL = {Trans. Amer. Math. Soc.},
    VOLUME = {176},
      YEAR = {1973},
     PAGES = {227--251},
     }

\bib{1007.4330}{article}{
    AUTHOR = {Hyt{\"o}nen, T.},
     TITLE = {The sharp weighted bound for general {C}alder{\'o}n-{Z}ygmund operators},
 JOURNAL = {Annals of Mathematics},
    VOLUME = {175},
      YEAR = {2012},
    NUMBER = {3},
     PAGES = {1473-1506},
       DOI = {10.4007/annals.2012.175.3.9 },
       URL = {http://annals.math.princeton.edu/2012/175-3/p09},
}

\bib{1202.2229}{article}{
    AUTHOR = {Hyt{\"o}nen, T.},
    author = {Lacey, M.},
    author ={P\'erez, C.},
     TITLE = {Non-probabilistic proof of the $A_2$ theorem, and sharp weighted bounds for the q-variation of singular integrals},
      YEAR = {2012},
       eprint = {http://arxiv.org/abs/1202.2229}
}

\bib{1106.4797}{article}{
    AUTHOR = {Hyt{\"o}nen, T.},
    author=  {Lacey, M.},
     TITLE = {The $A_{p}-A_{infty}$ inequality for general Calderon--Zygmund operators},
      YEAR = {2011},
       eprint = {http://arxiv.org/abs/1106.4797},
}

\bib{1103.5562}{article}{
    AUTHOR = {Hyt{\"o}nen, T.},
    author= {P\'erez, C.},
     TITLE = {Sharp weighted bounds involving $A_\infty$},
      YEAR = {2011},
       eprint = {http://arxiv.org/abs/1103.5562},
}

\bib{1301.4663}{article}{
  author={Lacey, M.},
      title={On the Two Weight Hilbert Transform Inequality},
      date={2013},
      eprint={http://arxiv.org/abs/1301.4663},
      }       

\bib{MR2657437}{article}{
   author={Lacey, Michael T.},
   author={Petermichl, Stefanie},
   author={Reguera, Maria Carmen},
   title={Sharp $A_2$ inequality for Haar shift operators},
   journal={Math. Ann.},
   volume={348},
   date={2010},
   number={1},
   pages={127--141},
   issn={0025-5831},
}

\bib{0911.3437}{article}{
  author={Lacey, Michael T.},
    author={Sawyer, Eric T.},
      author={Uriarte-Tuero, Ignacio},
      title={Two Weight Inequalities for Discrete Positive Operators},
      date={2009},
      journal={Submitted},
      eprint={http://www.arxiv.org/abs/0911.3437},
      }          

\bib{1201.4319}{article}{
  author={Lacey, Michael T.},
    author={Sawyer, Eric T.},
    author={Shen, Chun-Yen},
      author={Uriarte-Tuero, Ignacio},
      title={Two Weight Inequality for the Hilbert Transform: A Real Variable Characterization},
      date={2012},
      journal={Submitted},
      eprint={http://arxiv.org/abs/1201.4319},
      }

\bib{1001.4043}{article}{
  author={Lacey, Michael T.},
    author={Sawyer, Eric T.},
      author={Uriarte-Tuero, Ignacio},
      title={A Two Weight Inequality for the Hilbert transform Assuming an Energy Hypothesis},
      date={2011},
      journal={Submitted},
      eprint={http://arxiv.org/abs/1001.4043},
      }   

\bib{MR2399047}{article}{
    AUTHOR = {Lerner, Andrei K.},
     TITLE = {An elementary approach to several results on the
              {H}ardy-{L}ittlewood maximal operator},
   JOURNAL = {Proc. Amer. Math. Soc.},
    VOLUME = {136},
      YEAR = {2008},
    NUMBER = {8},
     PAGES = {2829--2833},
      ISSN = {0002-9939},
       URL = {http://dx.doi.org/10.1090/S0002-9939-08-09318-0},
}

\bib{MR2770437}{article}{
   author={Lerner, Andrei K.},
   title={Sharp weighted norm inequalities for Littlewood-Paley operators
   and singular integrals},
   journal={Adv. Math.},
   volume={226},
   date={2011},
   number={5},
   pages={3912--3926},
   issn={0001-8708},
}

\bib{1202.2824}{article}{
    AUTHOR = {Lerner, A.},
     TITLE = {A simpler proof of the $A_{2}$ Conjecture},
      YEAR = {2012},
       eprint = {http://arxiv.org/abs/1202.2824}
}

\bib{MR1993970}{article}{
   author={Mei, Tao},
   title={BMO is the intersection of two translates of dyadic BMO},
   language={English, with English and French summaries},
   journal={C. R. Math. Acad. Sci. Paris},
   volume={336},
   date={2003},
   number={12},
   pages={1003--1006},
   issn={1631-073X},
}

\bib{michal}{article}{
   author= {Michalska, M.},
   author={Nowak, M.},
   author={Sobolewski, P.},
   title={Bounded Toeplitz and Hankel products on weighted Bergman spaces of the unit ball},
   journal={ Ann. Polon. Math.},
   volume={ 99},
    year={2010},
    number={1},
    pages= {45--53}
    }

\bib{nazarov}{article}{
      author={Nazarov, F.},
      title={A counterexample to Sarason's conjecture },
      eprint={http://www.math.msu.edu/~fedja/prepr.html},
      }

\bib{1106.1342}{article}{
    AUTHOR = {Nazarov, F.},
    author= { Reznikov, A.},
    author={ Volberg, A.},
     TITLE = {The proof of $A_2$ conjecture in a geometrically doubling metric space},
      YEAR = {2011},
       eprint = {http://arxiv.org/abs/1106.1342},
}

\bib{MR1685781}{article}{
          author={Nazarov, F.},
          author={Treil, S.},
        AUTHOR = {Volberg, A.},
     TITLE = {The {B}ellman functions and two-weight inequalities for {H}aar
              multipliers},
   JOURNAL = {J. Amer. Math. Soc.},
    VOLUME = {12},
      YEAR = {1999},
    NUMBER = {4},
     PAGES = {909--928},
      ISSN = {0894-0347},
       URL = {http://dx.doi.org/10.1090/S0894-0347-99-00310-0},
}

  \bib{1003.1596}{article}{
      author={Nazarov, F.},
      author={Treil, S.},
    AUTHOR = {Volberg, A.},
    title={Two weight estimate for the Hilbert transform and corona decomposition for non-doubling measures},
    date={2005},
    eprint={http://arxiv.org/abs/1003.1596},
}

  \bib{volberg}{article}{
      author={Nazarov, F.},
      author={Treil, S.},
    AUTHOR = {Volberg, A.},
   TITLE = {Two weight inequalities for individual Haar multipliers and other well localized operators},
 JOURNAL = {Math. Res. Letters},
    VOLUME = {15},
      YEAR = {2008},
    NUMBER = {3},
     PAGES = {583--597},
     }

\bib{1210.1108}{article}{
    author = {Pott, S.},
    author = {Reguera, M.C.},
     title = {Sharp B\'ekoll\'e estimates for the Bergman Projection},
     journal={J. Funct. Anal. (in press)}
      year = {2013},
       eprint = {http://dx.doi.org/10.1016/j.jfa.2013.08.018},
}

\bib{1109.2027}{article}{
  author={Reguera, M.C.},
  author={Scurry, J.},
      title={On joint estimates for maximal functions and singular integrals in weighted spaces},
      date={2011},
      journal={To appear in Proceedings of the AMS},
      eprint={http://arxiv.org/abs/1109.2027},
      }       		

\bib{MR676801}{article}{
   author={Sawyer, Eric T.},
   title={A characterization of a two-weight norm inequality for maximal
   operators},
   journal={Studia Math.},
   volume={75},
   date={1982},
   number={1},
   pages={1--11},
}

\bib{MR930072}{article}{
    AUTHOR = {Sawyer, Eric T.},
     TITLE = {A characterization of two weight norm inequalities for
              fractional and {P}oisson integrals},
   JOURNAL = {Trans. Amer. Math. Soc.},
    VOLUME = {308},
      YEAR = {1988},
    NUMBER = {2},
     PAGES = {533--545},
      ISSN = {0002-9947},
       URL = {http://dx.doi.org/10.2307/2001090},
       }

\bib{MR0550655}{article}{
   author={Stegenga, David A.},
   title={Multipliers on the Dirichlet space},
   journal={Illinois J. Math.},
   volume={24},
   date={1980},
   number={1},
   pages={113--139},
   }

\bib{1201.1455}{article}{
  author={Treil, S.},
      title={A Remark on Two Weight Estimates for Positive Dyadic Operators},
      date={2012},
      eprint={http://www.arxiv.org/abs/1201.1455},
      }

\bib{MR1894362}{article}{
   author={Petermichl, Stefanie},
   author={Volberg, Alexander},
   title={Heating of the Ahlfors-Beurling operator: weakly quasiregular maps
   on the plane are quasiregular},
   journal={Duke Math. J.},
   volume={112},
   date={2002},
   number={2},
   pages={281--305},
}

\bib{sarason}{book}{
    AUTHOR = {Sarason, D.},
     TITLE = {Products of Toeplitz Operators    },
 series = {Springer Lecture Notes in Mathematics},
    VOLUME = {1573},
      YEAR = {194},
     PAGES = {318-320},
     }
     
  \bib{zheng}{article}{
    AUTHOR = {Stroethoff, K.},
     AUTHOR = {Zheng, D.},
     TITLE = {Products of Hankel and Toeplitz Operators on the Bergman Space},
 JOURNAL = {J. Funct. Anal.},
    VOLUME = {169},
      YEAR = {1999},
    NUMBER = {1},
     PAGES = {289--313},
     }
     
\bib{MR972707}{article}{
   author={Wilson, J. Michael},
   title={Weighted norm inequalities for the continuous square function},
   journal={Trans. Amer. Math. Soc.},
   volume={314},
   date={1989},
   number={2},
   pages={661--692},
   issn={0002-9947},
}

\bib{MR883661}{article}{
   author={Wilson, J. Michael},
   title={Weighted inequalities for the dyadic square function without
   dyadic $A_\infty$},
   journal={Duke Math. J.},
   volume={55},
   date={1987},
   number={1},
   pages={19--50},
   issn={0012-7094},
}

\bib{MR2359017}{book}{
   author={Wilson, Michael},
   title={Weighted Littlewood-Paley theory and exponential-square
   integrability},
   series={Lecture Notes in Mathematics},
   volume={1924},
   publisher={Springer},
   place={Berlin},
   date={2008},
   pages={xiv+224},
}

\end{biblist}
\end{bibsection}

\end{document}